\documentclass[11pt]{amsart}

\usepackage{amssymb,amsfonts}
\usepackage{graphicx}
\usepackage[all,arc]{xy}
\usepackage{enumitem}
\usepackage{hyperref}
\usepackage{mathrsfs}
\usepackage{tikz-cd}
\usepackage{todonotes}
\usetikzlibrary{shapes.geometric}
\usepackage{mathtools}
\usepackage[left=1in,top=1in,right=1in,bottom=1in]{geometry}

\usepackage[backend=bibtex,style=alphabetic,doi=false,isbn=false,url=false,giveninits=true,maxbibnames=50]{biblatex}
\usepackage{comment}
\DeclareMathOperator{\diag}{diag}
\DeclareMathOperator{\gr}{gr}

\newtheorem{thm}{Theorem}[section]
\newtheorem{mainresult}{Theorem}
 
\newtheorem*{thm*}{Theorem}

\newtheorem{cor}[thm]{Corollary}
\newtheorem{prop}[thm]{Proposition}
\newtheorem{lem}[thm]{Lemma}

\theoremstyle{definition}
\newtheorem{defn}[thm]{Definition}

\newtheorem{rem}[thm]{Remark}
\newtheorem{claim}[thm]{Claim}

\theoremstyle{remark}

\newcommand{\N}{\mathbb{N}}
\newcommand{\Z}{\mathbb{Z}}

\newcommand{\e}{\varepsilon} 
\renewcommand{\d}{\delta}
\newcommand{\gl}{\mathfrak{gl}}
\newcommand{\g}{\mathfrak{g}}
\renewcommand{\P}{\mathbb{P}}

\newcommand{\C}{\mathbb{C}}

\newcommand{\A}{\mathbb{A}}

\renewcommand{\emptyset}{\varnothing}

\makeatletter
\let\c@equation\c@thm
\makeatother
\numberwithin{equation}{section}

\newcommand{\Sym}{\mathrm{Sym}}

\title{FFLV}
\title{FFLV bases for covariant representations of $\mathfrak{gl}(m|n)$}

\begin{document}
\author[Ahmad]{Ibrahim Ahmad}
\address[Ahmad]{Chair of Algebra and Representation Theory, RWTH Aachen University, Pontdriesch 10-16, 52062 Aachen, Germany}
\email{ahmad@art.rwth-aachen.de}
\author[Enugandla]{Pranav Enugandla}
\address[Enugandla]{University of California, Berkeley, CA, United States of America}
\email{shreepranav\_varma@berkeley.edu}
\maketitle
\begin{abstract}
    
    We study the PBW filtration on covariant representations for the Lie superalgebra $\mathfrak{gl}(m|n)$. We prove for all covariant weights of the form $(\lambda|\mu,0^{n-1})$, that there exists a lattice polytope such that the lattice points of this polytope parametrize a basis of the corresponding associated graded space. As a consequence, we obtain degenerations of partial flag supervarieties for the supergroup $GL(m|n)$ into toric supervarieties.

\end{abstract}
\section{Introduction}

Let $\g=\mathfrak{n}^-\oplus \mathfrak{h}\oplus\mathfrak{n}^+$ be a finite-dimensional simple Lie algebra with a fixed triangular decomposition. 
By the PBW-theorem, the associated graded space of the universal enveloping algebra of $\mathfrak{n}^-$, which we denote by $\mathcal{U}(\mathfrak{n}^-)$, is the symmetric algebra $\Sym(\mathfrak{n}^-)$. This filtration induces an abelianzed action of $\Sym(\mathfrak{n}^-)$ on the associated graded module $\gr V(\lambda)=:V(\lambda)^a$. 
In \cite{FFLV_Type_A} and \cite{FFLV_Type_C}, Feigin, Fourier and Littelmann studied the PBW filtration of the finite-dimensional irreducible $\g$-modules $V(\lambda)$ in types $\mathsf{A}_n$ and $\mathsf{C}_n$, and constructed a monomial basis of $V(\lambda)^a$ parametrized by the lattice points of a normal polytope, which is known as the \textit{Feigin-Fourier-Littelmann-Vinberg polytope}. Similar results on polytopes parameterizing monomial bases of the associated graded modules are known in type $\mathsf{G}_2$ \cite{G_2-Result} and $\mathsf{B}_n$ \cite{Kus-Type-B,Makhlin-Type-B}.
The FFLV polytopes are fascinating in a variety of aspects.

From a combinatorial standpoint, these polytopes are very closely linked to the Gelfand-Tsetlin polytopes \cite{GT}: Ardila, Bliem and Salazar proved in \cite{ABS} that both polytopes can be realized as marked versions of Stanley’s order and chain polytopes \cite{Stanley} and provide piecewise-affine bijection between them.

Geometrically, Feigin introduced the PBW-degnerate flag variety in \cite{Feigin-GaM} by considering the closure of the highest weight orbit of the abelianized Lie group $\overline{N^{-,a}\cdot [v_\lambda]}\subseteq\P V(\lambda)^a$ and showed that there is a flat degeneration of the flag variety into the PBW-degenerate flag variety. 
The normality of the FFLV polytopes allowed Feigin, Fourier and Littelmann to conclude in \cite{FFL-Favourable} that the representations $V(\lambda)$ are \textit{favourable}. 
This condition provides a way to degenerate the embedded partial flag variety $G/P\hookrightarrow\P V(\lambda)$ to the PBW-degenerate flag variety and also degenerate the PBW-degenerate flag variety to the toric variety given by the FFLV polytope. 
Later on, Fang, Fourier and Littelmann gave in \cite{Birational_Seq} a more uniform construction of toric degenerations of flag varieties via \textit{birational sequences}.

Previously, Fourier and Kus constructed in \cite{Fourier-Kus} lattice points of a polytope yielding monomial bases for \textit{typical} representations of various  Lie superalgebras, including $\gl(m|n)$. In this article, we provide lattice polytopes parametrising monomial bases compatible with the PBW-filtration for certain \textit{covariant} $\gl(m|n)$-representations.
First introduced by Berele-Regev in \cite{Berele-Regev-Covariant} and Sergeev in \cite{Sergeev-Covariant}, covariant representations form the irreducible components of the semisimple representation $(\C^{m|n})^{\otimes k}$ for $k\geq 0$. 
Their highest weights are parametrized by pairs of partitions $(\lambda|\mu)=(\lambda_1,\dots,\lambda_m|\mu_1,\dots,\mu_n)$ such that $\lambda_m\geq\left|\left\{ i\mid \mu_i\neq0\right\}\right|$. 
This can be translated combinatorially as saying that the composition $(\lambda,\mu’)$, where $\mu’$ is the transpose of $\mu$, is a partition whose Young diagram sits inside the $(m,n)$-hook, i.e., does not contain the $(n+1)^\text{th}$ box in row $m+1$. 
For covariant representations, there are already Gelfand-Tsetlin bases available constructed in \cite{Molev} and \cite{Stoilova-Jeugt}, but an FFLV counterpart, even in spirit, remained missing.

We show that for all diagrams $(\lambda,\mu')$ in the $(m,n)$-hook with $\mu'=(1^k)$ for some $k\geq 0$, we can construct an FFLV-type basis. 
In particular, we get the first class of atypical representations having such a basis.
Moreover, in the typical case, our objects agree with the polytopes and bases from \cite[Theorem 1.1]{Fourier-Kus}. Our first set of results is the following; we refer to section \ref{sec-hook} for the precise definition.
\begin{mainresult}
    Let $\lambda$ be a partition of length $l(\lambda)\leq m$ with $\lambda_m\neq0$ and $\mu\geq0$. Then there exists a lattice polytope $P(\lambda|\mu,0^{n-1})\subseteq\mathbb{R}^{d_{1}}\times[0,1]^{d_{2}}$ with the following properties:
    \begin{enumerate}
        \item{
            The lattice points $S(\lambda|\mu,0^{n-1})$ parametrise a monomial basis of $\gr V(\lambda|\mu,0^{n-1})$.
        }
        \item{
            We have the following Minkowski decomposition
            \begin{equation*}
                S(\lambda|\mu,0^{n-1})=(S(\lambda-1^m|0^n)+S(1^m|\mu,0^{n-1}))\cap\{0,1\}^{d_2}
            \end{equation*}
        }
    \end{enumerate}
\end{mainresult}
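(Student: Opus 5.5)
We prove (1) by the standard spanning-plus-counting argument and then derive (2) from an explicit description of the polytope.

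First we fix coordinates. Write $\mathfrak{n}^-=\mathfrak{n}^-_{\bar 0}\oplus\mathfrak{n}^-_{\bar 1}$. The even part is spanned by the root vectors $f_{i,j}$ of $\gl(m)$ and $\gl(n)$, and we attach one $\mathbb{R}$-coordinate to each of them. The odd part is spanned by the $f_{i,\bar j}$ with $1\le i\le m$, $1\le j\le n$; since these square to zero in $\Sym(\mathfrak{n}^-)=S(\mathfrak{n}^-_{\bar0})\otimes\Lambda(\mathfrak{n}^-_{\bar1})$, their coordinates lie in $[0,1]$. For the weight $(\lambda|\mu,0^{n-1})$, every $\gl(n)$-root vector $f_{\bar i,\bar j}$ with $j\ge 2$ in the relevant range acts trivially. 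The remaining coordinates are the FFLV coordinates of $\gl(m)$ for $\lambda$, the coordinates of $f_{\bar 1,\bar j}$ (these form a single "row" of $\gl(n)$), and the odd ones.

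We define $P(\lambda|\mu,0^{n-1})$ by three families of inequalities:
\begin{itemize}
\item the FFLV Dyck-path inequalities for $\gl(m)$ with weight $\lambda$, where the odd variables are inserted into the paths as the extra column attached to the $(m|1)$ position;
\item the inequality $\sum_j s_{\bar1,\bar j}+\sum_i s_{i,\bar 1}\le \mu$ together with its path-type refinements;
\item $0\le s_{i,\bar j}\le 1$.
\end{itemize}
In the typical case these should specialise to the polytope of \cite{Fourier-Kus}.

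\textbf{Spanning.} We choose a homogeneous total order on monomials, graded by PBW degree and refined lexicographically in the FFLV manner. We show that every monomial $f^{s}v_\lambda$ with $s\notin S$ is, in $\gr V$, a combination of smaller monomials. For violated Dyck-path inequalities coming from $\gl(m)$ we quote the straightening relations of \cite{FFLV_Type_A}, using $f_\alpha^{\lambda_i+\dots+1}v=0$ and the differential operators $\partial_\beta$ induced by $\mathfrak{n}^+$. For the new inequalities we need super relations such as $f_{i,\bar 1}\cdots f_{m,\bar1}$-type annihilation: $v_\lambda$ is killed by $f_{\bar 1,\bar 2}$, and $f_{m,\bar 1}^2=0$. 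Applying the odd and even raising operators $e_{\beta}$ to these relations produces the needed straightening identities in $\Sym(\mathfrak{n}^-)$.

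\textbf{Counting.} We show $|S(\lambda|\mu,0^{n-1})|\le \dim V(\lambda|\mu,0^{n-1})$ by establishing (2) first on the level of lattice points. The inclusion $S(\lambda-1^m|0^n)+S(1^m|\mu,0^{n-1})\cap\{0,1\}^{d_2}\subseteq S(\lambda|\mu,0^{n-1})$ is immediate from additivity of the defining inequalities. The reverse inclusion is a decomposition lemma: given $s\in S(\lambda|\ldots)$, peel off a point of $S(1^m|\mu,0^{n-1})$ containing all odd coordinates of $s$ and a suitable part of the even ones. This is done greedily along Dyck paths, as in the Minkowski property of FFLV polytopes, using $\lambda_m\ge1$ so that a full column $1^m$ is available. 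With the Minkowski property in hand, the product $V(\lambda-1^m|0)\otimes V(1^m|\mu,0^{n-1})\twoheadrightarrow$ Cartan component gives the usual lower bound on the span dimension. The actual equality is then obtained by comparing with the dimension via a character or hook-Schur formula. Concretely, $V(\lambda|\mu,0^{n-1})$ restricted to $\gl(m)\oplus\gl(n)$ decomposes as a sum over the odd part, giving $\dim=\sum_{k}\binom{\ldots}{}$ times $\gl(m)$-dimensions. We match this with a fibration of $S$ over the odd coordinates, where each fibre is a shifted FFLV polytope for $\gl(m)$ and a $\gl(n)$ row polytope.

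\textbf{Main obstacle.} The hard step is the dimension count in the atypical case, where the restriction to $\gl(m)\oplus\gl(n)$ is not simply $\Lambda(\mathfrak{n}^-_{\bar1})\otimes$ (even module). We would handle it by the hook Schur function: $\dim V=\#$ semistandard $(m|n)$-hook tableaux of shape $(\lambda,1^{\mu})$. We then build an explicit bijection from lattice points of the fibre over each odd pattern to such tableaux, using the Ardila–Bliem–Salazar transfer map between FFLV and Gelfand–Tsetlin patterns on the even part. Spanning plus the dimension inequality then yields (1).
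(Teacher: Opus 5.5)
The decisive gap is the polytope itself. You never actually define it, and the one new inequality you state, $\sum_{j\ge2}s_{\bar1,\bar j}+\sum_i s_{i,\bar1}\le\mu$, is false. No ``path-type refinement'' can repair it, since refinements only cut further.

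Take $\mathfrak{gl}(2|2)$ and $(1,1|1,0)$, i.e.\ $\bigwedge^3\mathbb{C}^{2|2}$ with $v=e_1\wedge e_2\wedge e_{\bar1}$. The weight-$3\delta_1$ space is spanned by $e_{\bar1}\wedge e_{\bar1}\wedge e_{\bar1}=\pm f_{\varepsilon_1-\delta_1}f_{\varepsilon_2-\delta_1}v$, which sits in PBW degree $2$. Moreover, $f_{\varepsilon_1-\delta_1}f_{\varepsilon_2-\delta_1}$ is the only monomial of that weight that is nonzero in $\Sym(\mathfrak{n}^-)$. So every monomial basis of $\gr V$ contains it, yet it violates your inequality ($2>1$). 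Likewise, in the typical case the odd coordinates must range over all of $\{0,1\}^{mn}$, so your polytope does not specialise to Fourier--Kus: for $\mathfrak{gl}(1|2)$ and $(2|1,0)$ it has at most $6$ lattice points, while $\dim V=8$.

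The missing idea is that $s_{\delta_1-\delta_k}$ is not bounded jointly with the odd column but is tied to saturation of extended Dyck paths. For every extended Dyck path with $p(0)=\varepsilon_i-\delta_k$, $p(1)=\varepsilon_i-\delta_l$ ($l<k$) ending at $\varepsilon_j-\varepsilon_{j+1}$, and for $(\varepsilon_m-\delta_k,\dots,\varepsilon_m-\delta_1)$ with $j=m$, the paper imposes $\mu\sum_{t\ge1}s_{p(t)}+s_{\delta_1-\delta_k}\le\mu\lambda_j$. In words, $s_{\delta_1-\delta_k}=0$ as soon as the path minus its first odd root is saturated.

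Your spanning step also starts from a false relation. $f_{\delta_1-\delta_2}$ does \emph{not} kill $v$ when $\mu>0$; for example $f_{\delta_1-\delta_2}(e_1\wedge e_{\bar1})=e_1\wedge e_{\bar2}$. And $f_{m,\bar1}^2=0$ holds automatically in $\Sym(\mathfrak{n}^-)$, so it carries no information about the annihilator. The paper instead proceeds as follows:
\begin{itemize}
\item replace one $f_{\delta_1-\delta_k}$ by $f_{\varepsilon_i-\delta_k}$, which gives a violated strip inequality;
\item straighten that using the monomials $f_{\varepsilon_i-\varepsilon_m}^{\lambda_i+1-|I|}\prod_{j\in I}f_{\varepsilon_i-\delta_j}$, which also annihilate $v_{(\lambda|\mu,0^{n-1})}$;
\item apply the odd derivation $\partial_{\varepsilon_i-\delta_1}$, which sends $f_{\varepsilon_i-\delta_k}\mapsto f_{\delta_1-\delta_k}$.
\end{itemize}

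Even with the right polytope, your independence step does not close. The Minkowski/Cartan-embedding argument only shows that sums, with odd coordinates in $\{0,1\}$, of essential monomials for $V(\lambda-1^m|0^n)$ and $V(1^m|\mu,0^{n-1})$ are essential for $V(\lambda|\mu,0^{n-1})$. It yields $|S|\le\dim V$ only once both summand sets are known to be essential, i.e.\ the strip theorem and the case $\bigwedge^{m+\mu}\mathbb{C}^{m|n}$. You prove neither. The paper does the latter by a bijection of $S(1^m|\mu,0^{n-1})$ with the weight basis indexed by $B_{m+\mu}$.

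Your decomposition lemma is also set up wrongly. The summand in $S(1^m|\mu,0^{n-1})$ cannot contain all odd coordinates of $\mathbf{s}$. For $\mathfrak{gl}(1|2)$ and $(2|1,0)$, the point with $s_{\varepsilon_1-\delta_1}=s_{\varepsilon_1-\delta_2}=1$ lies in $S$, but $S(1|1,0)$ allows only one odd coordinate. The paper chooses exactly which odd roots go into $\mathbf{s}^2$, via increasing disjoint level sets $J_n,\dots,J_t$, where $t$ is the largest index with $s_{\delta_1-\delta_t}\ne0$. This guarantees that the first odd root of each relevant saturated path is removed.

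Finally, your fallback tableau count assumes that the fibres over the odd coordinates are shifted $\mathfrak{gl}(m)$ FFLV polytopes. This fails, because odd coordinates enter extended paths at the intermediate roots $\varepsilon_i-\varepsilon_m$. For $\mathfrak{gl}(3|2)$, $\lambda=(3,2,1)$, $\mu=0$, the fibre over $s_{\varepsilon_1-\delta_1}=s_{\varepsilon_1-\delta_2}=1$ is $\{s_{1,2}+s_{1,3}\le1,\ s_{2,3}\le1\}$. That is a triangle times a segment, which is not an FFLV polytope of $\mathfrak{gl}(3)$.
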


If we restrict ourselves to the diagrams living inside the horizontal strip of the $(m,n)$-hook, i.e. the diagrams with at most $m$ rows, then we get the following further results. For precise definitions, see sections \ref{sec-defn-polytope}, \ref{sec-indep-strip} and \ref{sec-degen}.
\begin{mainresult}
    Let $\lambda,\mu$ be partitions of lengths $l(\lambda),l(\mu)\leq m$. Then
    \begin{enumerate}
        \item{
            We have the Minkowski decomposition
            \begin{equation*}
                S(\lambda|0^n)+S(\mu|0^n)=S(\lambda+\mu|0^n)\cap\{0,1\}^{d_{2}}
            \end{equation*}
        }
        \item{
            The module $V(\lambda|0^{n})$ is favourable with respect to the monomial order in Definition \ref{def-monomial-order}.
        }
        \item{
            Let $G/P\hookrightarrow G\cdot[v_{(\lambda|0^n)}]\subseteq\mathbb{P}(V(\lambda|0^n))$ be the embedded partial flag supervariety as the highest weight orbit for the supergroup $G=GL(m|n)$. There exists a flat proper morphism of superschemes $\kappa:\mathrm{Proj}\mathcal{R}\rightarrow\mathbb{A}^{1|0}$ such that the fibers are
            \begin{equation*}
                \kappa^{-1}(a)\cong\begin{cases}
                    \mathrm{Proj}(\C[G/P]_{(\lambda|0^n)}) &a\neq0\\
                    \mathrm{Proj}(\C[x^\mathbf{s}u\,|\,\mathbf{s}\in S(\lambda|0^n)]) &a=0,
                \end{cases}
            \end{equation*}
            where $\C[G/P]_{(\lambda|0^n)}$ is the homogeneous coordinate ring of the partial flag supervariety.
            Moreover, the affine spectrum of the algebra of the special fibre $\mathrm{Spec}\,\C[x^\mathbf{s}u\,|\,\mathbf{s}\in S(\lambda|0^n)]$ is a toric supervariety as defined by Jankowski \cite{Toric-Super-New,Toric-Super-One-Odd}.
        }
    \end{enumerate}
\end{mainresult}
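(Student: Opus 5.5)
Theorem 2 has three parts, and I will prove them in order: (1) the Minkowski property, (2) favourability, and (3) the flat degeneration, which follows from (2).

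\emph{Setup.} For $(\lambda|0^n)$ the highest weight vector is killed by every odd positive root vector. The odd negative root vectors $f_{i,\bar j}$, with $i\le m<\bar j$, act as exterior variables. This is why the polytope $P(\lambda|0^n)$ has its odd coordinates in $[0,1]^{d_2}$. I take as given the definition of $S(\lambda|0^n)$ from section \ref{sec-defn-polytope}: FFLV Dyck-path inequalities in the even coordinates, together with additional path inequalities mixing even and odd coordinates. I also take as given the spanning result of section \ref{sec-indep-strip}: the monomials $f^{\mathbf s}v_\lambda$ with $\mathbf s\in S(\lambda|0^n)$ span $\gr V(\lambda|0^n)$.

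\emph{Part (1).} The inclusion $S(\lambda|0^n)+S(\mu|0^n)\subseteq S(\lambda+\mu|0^n)\cap\{0,1\}^{d_2}$ is immediate from the linearity of the path inequalities in the weight. The content is the reverse decomposition. Given $\mathbf s\in S(\lambda+\mu|0^n)$ with odd part $0/1$, I first split off the odd part. The odd coordinates equal to $1$ form an antichain-like configuration constrained by the mixed paths. I assign each such coordinate to $\lambda$ or to $\mu$ greedily, row by row, respecting the budget $\lambda_i-\lambda_{i+1}$ versus $\mu_i-\mu_{i+1}$ on each mixed path. I then decompose the remaining even part using the classical FFLV Minkowski property of \cite{FFLV_Type_A} (equivalently, the Ardila--Bliem--Salazar marked chain polytope normality), applied to the residual budgets. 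The difficulty is checking that the greedy odd assignment leaves residual even data in the correct FFLV polytopes. I would handle this by induction on $|\mu|$, reducing to $\mu=\omega_k$ a fundamental weight, where $S(\omega_k|0^n)$ is explicitly a set of $0/1$ vectors supported on an antichain. The reduction then becomes: find one point of $S(\omega_k|0^n)$ below $\mathbf s$ whose removal keeps all path sums within the bounds for $\lambda+\mu-\omega_k$. I would construct this point by choosing, along the boundary of the staircase, the maximal entries forming a path-cover. I expect this combinatorial step to be the main obstacle.

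\emph{Part (2).} Favourability, in the sense of \cite{FFL-Favourable}, requires two things: the essential monomials with respect to the order of Definition \ref{def-monomial-order} are exactly $S(\lambda|0^n)$, and $|S(N\lambda|0^n)|=|S(\lambda|0^n)|^{*N}$ in the sense of Minkowski sums, i.e. $\dim V(N\lambda)=|NS(\lambda)|$-type equality for all $N$. First, the spanning result combined with a dimension count shows that the $S$-monomials form a basis; the dimension count uses the supersymmetric Schur function or the known Gelfand--Tsetlin count of \cite{Molev,Stoilova-Jeugt}. Second, the argument of \cite{FFLV_Type_A} shows these monomials are exactly the essential ones, since the straightening relations are compatible with the order. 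Third, part (1) iterated gives $S(N\lambda|0^n)=(S(\lambda|0^n))^{+N}\cap\{0,1\}^{d_2}$. In the super setting, the cap with $\{0,1\}^{d_2}$ reflects that odd variables square to zero, so the correct notion of Minkowski power is the truncated one. With this notion, the favourability criterion holds verbatim.

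\emph{Part (3).} I follow the construction of \cite{FFL-Favourable,Birational_Seq}. Filter $\C[G/P]_{(\lambda|0^n)}=\bigoplus_N V(N\lambda)^*$ by the essential-monomial valuation, and let $\mathcal R$ be the Rees algebra of this filtration. By part (2), the associated graded algebra is the semigroup superalgebra $\C[x^{\mathbf s}u\mid\mathbf s\in S(\lambda|0^n)]$, in which odd generators anticommute and square to zero. Flatness over $\C[t]$ holds because $\mathcal R$ is torsion-free and graded with finite-dimensional pieces of constant dimension. This gives the fibres as stated. Finally, the special fibre is generated by monomials in even and odd variables indexed by the lattice points of a lattice polytope. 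This matches Jankowski's definition of an affine toric supervariety, namely the spectrum of a super-semigroup algebra of a finitely generated submonoid with nilpotent odd part.
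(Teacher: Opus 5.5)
There is a genuine gap in Part (1), and it is the step you flag yourself: the reverse inclusion is never proved. The greedy odd assignment and the ``path-cover'' choice of a point of $S(\omega_k|0^n)$ below $\mathbf s$ are only sketched. That the residual satisfies every mixed and even path inequality for $\lambda+\mu-\omega_k$ is exactly what has to be shown.

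The missing idea is that no odd/even splitting is needed. By Proposition \ref{prop-strip-fflv-is-marked-chain}, $P(\lambda|0^n)=\hat P(\lambda)\cap\{0\le s_\alpha\le 1,\ \alpha\in\Phi^+_{\overline 1}\}$. Here $\hat P(\lambda)$ is the marked chain polytope of a marked poset in which the odd elements sit above an extra minimum $\hat 0$ marked $0$, and the top row is marked $\lambda_1,\dots,\lambda_m$. So the extended Dyck path inequalities are just chain inequalities, with marking linear in $\lambda$. Marked chain polytopes have the lattice-point Minkowski property: \cite[Lemma 2]{Fourier-Chain-Polytopes} peels off a $0/1$ marking, and iterating it gives $\hat S(\lambda+\mu)=\hat S(\lambda)+\hat S(\mu)$. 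Since all coordinates are nonnegative, $\mathbf s^1+\mathbf s^2$ has odd entries in $\{0,1\}$ only if both summands do. Hence $S(\lambda+\mu|0^n)=(S(\lambda|0^n)+S(\mu|0^n))\cap\{0,1\}^{d_2}$ (Proposition \ref{prop-minkowski-decomp}).

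This is the correct reading of the identity. Your ``immediate'' inclusion $S(\lambda|0^n)+S(\mu|0^n)\subseteq S(\lambda+\mu|0^n)$ is false as written, because both summands can have $s_{\e_i-\d_j}=1$. It only holds after intersecting the sum with $\{0,1\}^{d_2}$.

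Part (2) has a second gap. You get a basis from spanning plus $|S(\lambda|0^n)|=\dim V(\lambda|0^n)$, but that count is only asserted (``supersymmetric Schur functions or Gelfand--Tsetlin''). Matching lattice points of $P(\lambda|0^n)$ with $(m|n)$-hook tableaux for general $\lambda$ is a nontrivial statement you do not supply. The paper needs a count only for $\lambda=1^k$, where Lemma \ref{lem-bijection-small-fundamentals} gives an explicit bijection with a weight basis of $\bigwedge^k\C^{m|n}$. It then uses two facts:
\begin{enumerate}
\item Sums of essential monomials with odd entries in $\{0,1\}$ stay essential, via the Cartan embedding as in \cite[Proposition 3.12]{Flag-Degen}.
\item Part (1) writes every point of $S(\lambda|0^n)$ as such a sum.
\end{enumerate}
So every point of $S(\lambda|0^n)$ is essential, and the straightening law gives the converse. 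Alternatively, just invoke Theorem \ref{thm-fflv-strip}.

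In Part (3) the Rees-algebra strategy is right, but you take two facts for granted that the paper proves.
\begin{itemize}
\item The homogeneous coordinate ring of $G\cdot[v_{(\lambda|0^n)}]$ is $\bigoplus_N V(N\lambda)^*$. The paper shows the orbit is closed via \cite[Theorem 1.3]{Gruson} applied to the opposite Borel. Semisimplicity for covariant $\nu$ then gives $\Sym^k(V(\nu)^*)=I_k\oplus V(k\nu)^*$.
\item The associated graded of the essential-monomial filtration is the super semigroup algebra. In the super setting this is the content of \cite{Flag-Degen}. The paper makes it applicable by embedding $\bigoplus_k V(k\nu)^*$ into $\bigoplus_k K_{\mathfrak b}(k\nu)^*$ compatibly with the Cartan-map multiplication (Lemma \ref{lem-embedding-into-kacs}).
\end{itemize}
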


This paper is organized as follows: In section \ref{sec-prelim}, we set the notation and introduce preliminaries. In section \ref{sec-defn-polytope}, we provide the combinatorial model for our polytope for weights of the form $(\lambda|0^n)$ and state its defining inequalities. In section \ref{sec-spanning-strip}, we show show that our set of monomials is a spanning set for $\gr V(\lambda|0^n)$. In section \ref{sec-indep-column-strip} we show that they are linearly independent for exterior powers $\bigwedge^k\C^{m|n}$ and in section \ref{sec-indep-strip} we show the Minkowski decomposition and conclude the linear independence for general $\gr V(\lambda|0^n)$. In section \ref{sec-hook} we state our polytope for the weights of the form $(\lambda|\mu,0^{n-1})$ and prove our results. In section \ref{sec-degen} we show that we get the degeneration of the partial flag supervariety.

\noindent
\textbf{Acknowledgement:}
Both authors would like to thank their advisors, Ghislain Fourier and Vera Serganova, as well as Cameron Chang, Evgeny Feigin and Xin Fang for helpful discussions. Further, they want to thank Igor Makhlin for pointing out an error in a previous version of this article. IA would like to thank the German-American Fulbright Commission for awarding a Fulbright scholarship for a research visit to UC Berkeley during which this article was written. Its contents are solely the responsibility of the authors and do not necessarily represent the official views of the Fulbright Program, the Government of the United States, or the German-American Fulbright Commission.
\section{Preliminaries}\label{sec-prelim}
We begin by recalling the notions on $\gl(m|n)$, which can be found in \cite{Fioresi,Cheng-Wang,Serg_Rep}. Many standard facts in \cite{Cheng-Wang} originally stem from Kac's seminal Advances paper \cite{Kac_Lie} and his succeeding article \cite{Kac_Rep}.\smallskip

The Lie superalgebra $\gl(m|n)$ is given as a set by
\begin{equation*}
    \gl(m|n)=\left\{\begin{pmatrix}
        A&B\\
        C&D
    \end{pmatrix}\ \middle|\  A\in\C^{m\times m},D\in\C^{n\times n}, B\in\C^{m\times n},C\in\C^{n\times m}\right\}.
\end{equation*}

The even part, which we denote by $\gl(m|n)_{\overline{0}}$, consists of all block diagonal matrices, and the odd part, which we denote by $\gl(m|n)_{\overline{1}}$, consists of all block antidiagonal matrices.

The bracket structure is given by
\begin{equation*}
    \left[X,Y\right]=X\cdot Y-(-1)^{|X|\cdot|Y|}Y\cdot X,
\end{equation*}
for $X\in\gl(m|n)_{\overline{i}}$, $Y\in\gl(m|n)_{\overline{j}}$ both homogeneous and $|X|=\overline{i}$ and $|Y|=\overline{j}$, respectively.\smallskip

On $\gl(m|n)$, we fix the \textit{Cartan subalgebra} $\mathfrak{h}\subseteq\gl(m|n)$ to be the set of diagonal matrices.

Furthermore, we fix our Borel subalgebra $\mathfrak{b}\supseteq\mathfrak{h}$ to be the \textit{distinguished Borel subalgebra} consisting of all upper triangular matrices in $\gl(m|n)$.

Hence, we get the triangular decomposition
\begin{equation*}
    \gl(m|n)=\mathfrak{n}^-\oplus\mathfrak{h}\oplus\mathfrak{n}^+=\mathfrak{n}^-\oplus\mathfrak{b},
\end{equation*}
where $\mathfrak{n}^-$ (resp. $\mathfrak{n}^+$), refers to the set of strictly lower (resp. upper) triangular matrices of $\gl(m|n)$.
\begin{rem}
    Note that our choice of Borel is a deliberate one as, in contrast to the classical case, the Borel subalgebras of $
    \gl(m|n)$ are not conjugate to each other.
\end{rem}
\paragraph{\textit{Root system}}
The \textit{roots} of $\gl(m|n)$ are those functionals $\alpha\in\mathfrak{h}^*\setminus\{0\}$ such that the \textit{root space}
\begin{equation*}
    \mathfrak{gl}(m|n)_\alpha:=\left\{x\in\gl(m|n)\ \middle|\ \left[h,x\right]=\alpha(h)x,\,\forall h\in\mathfrak{h}\right\}
\end{equation*}
is non-zero.

For $h=\diag(a_1,\dots,a_m,b_1,\dots,b_n)\in\mathfrak{h}$, we define the functionals
\begin{equation*}
    \e_i(h)=a_i,\hspace{1em}\delta_j(h)=b_j,\hspace{1em}\text{ for }1\leq i\leq m,\,1\leq j\leq n
\end{equation*}

The set of roots of $\gl(m|n)$, denoted $\Phi=\Phi_{\overline{0}}\cup \Phi_{\overline{1}}$, consists of the following set of functionals
\begin{align*}
    \Phi_{\overline{0}}&=\left\{\e_i-\e_j,\,\d_r-\d_s\middle|1\leq i\neq j\leq m,\,1\leq r\neq s\leq n\right\}\\
    \Phi_{\overline{1}}&=\left\{\pm(\e_i-\d_j)\middle|1\leq i\leq m,\,1\leq j\leq n\right\}.
\end{align*}
We call the set of roots in $\Phi_{\overline{0}}$ (resp. $\Phi_{\overline{1}}$) \textit{even} (resp. \textit{odd}), as their root spaces have the respective parity.

More explicitly, we have 
\begin{align*}
    \gl(m|n)_{\e_i-\e_j}=\langle E_{i,j}\rangle_\C,\hspace{1em}&\gl(m|n)_{\d_r-\d_s}=\langle E_{m+r,m+s}\rangle_\C\\
    \gl(m|n)_{\e_i-\d_j}=\langle E_{i,m+j}\rangle_\C,\hspace{1em}&\gl(m|n)_{\d_j-\e_i}=\langle E_{m+j,i}\rangle_\C
\end{align*}

With respect to the distinguished Borel subalgebra $\mathfrak{b}$, we have the following set of \textit{positive roots} $\Phi^+=\Phi^+_{\overline{0}}\cup\Phi^+_{\overline{1}}$, where
\begin{align*}
    \Phi^+_{\overline{0}}&=\left\{\e_i-\e_j,\,\d_r-\d_s\,\middle|\,1\leq i< j\leq m,\,1\leq r< s\leq n\right\}\\
    \Phi^+_{\overline{1}}&=\left\{\e_i-\d_j\,\middle|\,1\leq i\leq m,\,1\leq j\leq n\right\}.
\end{align*}

Having now fixed the positive roots, we then define the \textit{negative root vector} (resp. \textit{positive root vector}), $f_\alpha\in\mathfrak{n}^-$ (resp. $e_\alpha\in\mathfrak{n}^+$), for a given positive root $\alpha\in\Phi^+$ to be the elementary matrix spanning the root space $\gl(m|n)_{-\alpha}$ (resp. $\gl(m|n)_{\alpha}$).

\begin{rem}\label{rem-barred-indices-and-defn-set-I}
    To better distinguish between even and odd indices, we introduce the set
    \begin{equation*}
        K:=[m]\cup\overline{[n]},
    \end{equation*}
    where $[m]=\{1,\dots,m\}$ and $\overline{[n]}=\left\{\overline{1},\dots,\overline{n}\right\}$.
    We are sometimes going to refer to $\d_j$ as $\e_{\overline{j}}$ for $1\leq j\leq n$.
\end{rem}

\paragraph{\textit{Highest weight representations}}The finite-dimensional simple $\mathfrak{b}$-highest weight representations of $\gl(m|n)$ are parametrized by pairs of partitions $(\lambda^1|\lambda^2)$ of lengths $l(\lambda^1)\leq m$ and $l(\lambda^2)\leq n$ and denoted by $V(\lambda^1|\lambda^2)$.

The natural representation of $\gl(m|n)$ is the superspace $\C^{m|n}=V(1^1|0^n)$ on which we have the weight basis
\begin{equation*}
    (\C^{m|n})_{\overline{0}}=\Big\langle e_i\ \Big|\ i\in[m]\Big\rangle_\C\hspace{1em} (\C^{m|n})_{\overline{1}}=\left\langle e_{\overline{j}}\ \Big|\ \overline{j}\in\overline{[n]}\right\rangle_\C.
\end{equation*}

The representations of interest in this article are the \textit{covariant representations} introduced by Berele-Regev \cite{Berele-Regev-Covariant} and Sergeev \cite{Sergeev-Covariant}.
\begin{defn}
    Let $\lambda,\mu$ be partitions of lengths $l(\lambda)\leq m$ and $l(\mu)\leq n$. Then, we say that the representation $V(\lambda|\mu)$ is \textit{covariant} if $\lambda_m\geq\left|\left\{ i\mid \mu_i\neq0\right\}\right|$.
\end{defn}
\begin{rem}
    Some pieces of the literature, like \cite{Cheng-Wang}, refer to these representations as \textit{polynomial representations}.
    \end{rem}
\begin{rem}
    These representations are exactly those that appear in the semisimple decomposition of ${(\mathbb{C}^{m|n})}^{\otimes d}$ for some $d\in\N$. The decomposition can be obtained using Schur-Weyl duality, see \cite[Theorem 3.11]{Cheng-Wang}.

    An alternative way of parameterizing covariant representations is by identifying them with a single $(m,n)$-hook Young diagram: We consider the composition $(\lambda,\mu')$, where $\mu'$ is the transpose of $\mu$. Then the covariance condition characterizes when this composition is a partition and hence its Young diagram lies in the $(m,n)$-hook.
\end{rem}

\begin{lem}\label{lem-cartan-embedding}
    Let $\lambda=(\lambda^1\mid\lambda^2)$, $\mu=(\mu^1\mid\mu^2)$ be pairs of partitions such that $V(\lambda)$ and $V(\mu)$ are covariant. Then $V(\lambda+\mu)$ is also covariant, and we have a (unique up to scaling) Cartan embedding
    \begin{align*}
        V(\lambda+\mu)&\longrightarrow V(\lambda)\otimes V(\mu)\\
        v_{\lambda+\mu}&\longmapsto v_\lambda\otimes v_\mu,
    \end{align*}
    where $v_{\lambda+\mu}$, $v_\lambda$ and $v_\mu$ are the respective highest weight vectors.
\end{lem}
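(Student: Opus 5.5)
Write $\lambda^1,\lambda^2$ and $\mu^1,\mu^2$ for the components of the two weights. The proof splits into a combinatorial part (covariance of the sum) and a representation-theoretic part (existence and uniqueness of the embedding).

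\emph{Covariance.} Since sums of partitions are partitions, $\lambda^1+\mu^1$ and $\lambda^2+\mu^2$ are partitions of lengths at most $m$ and $n$. For the hook condition, note that
\begin{equation*}
    \left|\{i\mid \lambda^2_i+\mu^2_i\neq 0\}\right|=\max\bigl(l(\lambda^2),l(\mu^2)\bigr),
\end{equation*}
because both are partitions and their entries are nonnegative. Hence
\begin{equation*}
    (\lambda^1+\mu^1)_m=\lambda^1_m+\mu^1_m\ge l(\lambda^2)+l(\mu^2)\ge \max\bigl(l(\lambda^2),l(\mu^2)\bigr),
\end{equation*}
which is the covariance condition for $\lambda+\mu$.

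\emph{Embedding.} The vector $v_\lambda\otimes v_\mu$ has weight $\lambda+\mu$ and is annihilated by $\mathfrak{n}^+$. Indeed, $\mathfrak{n}^+$ acts on the tensor product by the super Leibniz rule $e\cdot(v\otimes w)=ev\otimes w+(-1)^{|e||v|}v\otimes ew$, and both terms vanish. It is also an $\mathfrak{h}$-eigenvector of weight $\lambda+\mu$.

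Let $W=\mathcal{U}(\gl(m|n))\cdot(v_\lambda\otimes v_\mu)=\mathcal{U}(\mathfrak{n}^-)\cdot(v_\lambda\otimes v_\mu)$. This is a highest weight module of highest weight $\lambda+\mu$. The key input is semisimplicity: $V(\lambda)$ and $V(\mu)$ are direct summands of tensor powers $(\C^{m|n})^{\otimes d}$ and $(\C^{m|n})^{\otimes d'}$. Therefore $V(\lambda)\otimes V(\mu)$ is a direct summand of $(\C^{m|n})^{\otimes(d+d')}$, which is semisimple by the Schur--Weyl duality cited in the remark, with covariant simple summands.

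So $W$ is a submodule of a semisimple module and is itself semisimple. Being a highest weight module, it is indecomposable: any decomposition would split the one-dimensional top weight space. Hence $W$ is simple, so $W\cong V(\lambda+\mu)$, and $v_{\lambda+\mu}\mapsto v_\lambda\otimes v_\mu$ defines the desired injective homomorphism.

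\emph{Uniqueness.} Any homomorphism $V(\lambda+\mu)\to V(\lambda)\otimes V(\mu)$ sends $v_{\lambda+\mu}$ to a vector of weight $\lambda+\mu$. By the weight decomposition of the tensor product, the $(\lambda+\mu)$-weight space is spanned by $v_\lambda\otimes v_\mu$, since all other weights of $V(\lambda)$ and $V(\mu)$ lie strictly below the highest ones. Since $v_{\lambda+\mu}$ generates the module, the map is determined up to a scalar.

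The main subtlety is the semisimplicity step. For Lie superalgebras, highest weight submodules of tensor products need not be simple in general, so we genuinely need the covariant setting: the tensor product sits inside some $(\C^{m|n})^{\otimes N}$, which is semisimple.
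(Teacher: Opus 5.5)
Your proof is correct and follows essentially the same route as the paper. The key step in both is that $V(\lambda)\otimes V(\mu)$ sits inside a tensor power of $\C^{m|n}$, so it is semisimple and the highest weight submodule generated by $v_\lambda\otimes v_\mu$ must be simple. You also spell out two points the paper leaves implicit: the covariance check $(\lambda^1+\mu^1)_m\ge l(\lambda^2)+l(\mu^2)\ge l(\lambda^2+\mu^2)$, and uniqueness up to scaling via the one-dimensional $(\lambda+\mu)$-weight space of the tensor product.
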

\begin{proof}
    The vector on the right-hand side is of $\mathfrak{b}$-highest weight $\lambda+\mu$. We now need to check that $\mathcal{U}(\gl(m|n)).(v_\lambda\otimes v_\mu)$ is in fact irreducible. However, $V(\lambda)\otimes V(\mu)$ embeds in $V^{\otimes N}$ for an appropriate $N$ and is therefore semisimple. Thus the submodule generated by $v_\lambda\otimes v_\mu$ is in fact simple.
\end{proof}
We recall the PBW theorem for $\mathfrak n^-$:
\begin{thm}
Fix a homogeneous basis of $\mathfrak
n^-$ with $x_1,\dots,x_n$ a basis of $\mathfrak
n^-_{\overline{0}}$ and $y_1,\dots,y_q$ a basis of $\mathfrak
n^-_{\overline{1}}$, then the universal enveloping algebra $\mathcal{U}(\mathfrak{n}^-)$ has a basis consisting of ordered monomials
\begin{equation*}
    y_{i_{1}}\cdots y_{i_{l}}x_{1}^{k_{1}}\dots x_{n}^{k_{n}},\text{ for $k_i\geq 0$, $1\leq i_1<\cdots<i_l\leq q$}.
\end{equation*}
\end{thm}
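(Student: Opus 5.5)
We prove the PBW theorem for $\mathcal{U}(\mathfrak{n}^-)$ in two steps. First we show the ordered monomials span, by induction on length. Then we show they are linearly independent, by building a faithful action on a super-symmetric algebra. Write $\mathcal{U}=\mathcal{U}(\mathfrak{n}^-)$. It is the tensor algebra $T(\mathfrak{n}^-)$ modulo the two-sided ideal generated by $a\otimes b-(-1)^{|a||b|}b\otimes a-[a,b]$ for homogeneous $a,b$. Let $\mathcal{U}_{\le d}$ denote the image of $\bigoplus_{k\le d}(\mathfrak{n}^-)^{\otimes k}$; this gives a filtration of $\mathcal{U}$.

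\textbf{Spanning.} We totally order the basis by declaring $y_1<\dots<y_q<x_1<\dots<x_n$. Any word $z_{j_1}\cdots z_{j_d}$ in the basis elements can be rearranged into nondecreasing order. Each swap of adjacent letters costs a sign and produces a bracket term, which lies in $\mathcal{U}_{\le d-1}$ and is handled by induction on $d$. In the sorted word, a repeated odd letter $y_i y_i$ can be eliminated, since $y_iy_i=\tfrac12[y_i,y_i]$ is again a term in $\mathcal{U}_{\le 1}$. Repeated even letters are kept as powers $x_i^{k_i}$. After these reductions every word is a combination of monomials of the stated form, so the stated monomials span $\mathcal{U}$.

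\textbf{Independence.} We let $S=\Lambda(y_1,\dots,y_q)\otimes\C[x_1,\dots,x_n]$ be the free supercommutative algebra on the basis. Its monomials $y_{i_1}\cdots y_{i_l}x^{k}$, with $i_1<\dots<i_l$, form a basis of $S$. The goal is a linear map $\rho:\mathfrak{n}^-\to\mathrm{End}(S)$ satisfying
\[
\rho(a)\rho(b)-(-1)^{|a||b|}\rho(b)\rho(a)=\rho([a,b]),
\]
with the additional property that $\rho(z)$ sends an ordered monomial $m$ with $z\le\min(m)$ to $zm$. Once $\rho$ exists, it extends to $\mathcal{U}$. Applying an ordered monomial $u$ of $\mathcal{U}$ to $1\in S$ then gives exactly the corresponding monomial of $S$. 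Since the monomials of $S$ are linearly independent, so are the ordered monomials of $\mathcal{U}$.

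The main obstacle is constructing $\rho$. We build it filtration degree by filtration degree, on $S_{\le p}$, as in the classical proof of the PBW theorem (e.g. Humphreys §17.4 or Jacobson). We define
\[
\rho(z)m=zm\quad\text{if } z\le\min(m),
\]
and otherwise write $m=z'm'$ and set
\[
\rho(z)(z'm')=(-1)^{|z||z'|}\rho(z')\bigl(\rho(z)m'\bigr)+\rho([z,z'])m'.
\]
The second formula is well-defined by induction on degree. The commutation relation is then checked through the case analysis on the relative order of the two acting letters and the first letter of $m$. The delicate case reduces to the super-Jacobi identity in $\mathfrak{n}^-$, and we must track the signs $(-1)^{|a||b|}$ carefully throughout. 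One further case arises when $z=y_i=\min(m)$ is odd. Then $y_i\cdot y_im'=0$ in $S$, so we need $\rho(y_i)^2=\tfrac12\rho([y_i,y_i])$. We verify this directly from the recursive definition.

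Alternatively, we can shortcut this step by citing the standard super-PBW theorem (Kac; see \cite{Cheng-Wang}). That theorem applies verbatim to the Lie superalgebra $\mathfrak{n}^-$ over $\C$, where $2$ is invertible. The second route is what we would actually use in the paper, keeping the above as the justification.
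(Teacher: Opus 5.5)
The paper gives no proof of this statement; it only recalls the PBW theorem for Lie superalgebras as a standard fact. Your fallback of citing Kac's super-PBW theorem (e.g.\ via Cheng--Wang) is therefore exactly the paper's route, and your spanning argument is fine.

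One correction to the independence sketch you keep as justification. Your rule $\rho(z)m=zm$ whenever $z\le\min(m)$ gives $\rho(y_i)(y_im')=0$ in the case $z=y_i=\min(m)$. Hence $\rho(y_i)^2m'=0$ for every ordered $m'$ with $y_i<\min(m')$. The relation you need, however, is $2\rho(y_i)^2=\rho([y_i,y_i])$. Taking $m'=1$, this forces $[y_i,y_i]=\rho([y_i,y_i])1=0$ in $S$.

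So for a general Lie superalgebra the identity cannot be ``verified from the recursive definition''. You must instead \emph{define} $\rho(y_i)(y_im'):=\tfrac12\rho([y_i,y_i])m'$ for $y_i<\min(m')$, which is where invertibility of $2$ enters. The consistency check then also needs $[y_i,[y_i,y_i]]=0$, which follows from super-Jacobi since $3$ is invertible.

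In the paper's setting the issue disappears. For the distinguished Borel, $\mathfrak{n}^-_{\overline{1}}$ consists of matrices supported in the lower-left $n\times m$ block, and any two of these multiply to $0$. Hence $[\mathfrak{n}^-_{\overline{1}},\mathfrak{n}^-_{\overline{1}}]=0$, and your original rule is consistent there.
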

If we consider the filtration of the universal enveloping algebra $\mathcal{U}(\mathfrak
n^-)$
\begin{equation*}
    \mathcal{U}(\mathfrak
n^-)=\bigcup_{k\in\N_0}\mathcal{U}(\mathfrak
n^-)_k,\hspace{1em}\mathcal{U}(\mathfrak
n^-)_k=\langle y_{i_{1}}\cdots y_{i_{l}}f_{1}^{k_{1}}\dots f_{n}^{k_{n}}\mid\hspace{0.5em}l+k_1+\cdots+k_n\leq k\rangle_\C
\end{equation*}

Then we get that the associated graded space $\gr\,\mathcal{U}(\mathfrak{n}^-)$ is isomorphic to $\Sym(\mathfrak n^-)$, the symmetric algebra of $\mathfrak n^-$.

This filtration induces a filtration on the simple highest weight representation $V(\lambda)$, where $\lambda=(\lambda^1|\lambda^2)$,
\begin{equation*}
    V(\lambda)=\bigcup\limits_{k\in\N_0}\mathcal{U}(\mathfrak{n}^{-})_kv_\lambda,
\end{equation*}
and $v_\lambda\in V(\lambda)$ denotes the highest weight vector.

On the associated graded space $\gr V(\lambda)$, we can then get an action of $\Sym(\mathfrak{n}^-)$ and thus have the isomorphism
\begin{equation*}
    \gr V(\lambda)\simeq \Sym(\mathfrak{n}^-)/I(\lambda)
\end{equation*}
for an appropriate left-ideal $I(\lambda)\subseteq \Sym(\mathfrak n^-)$.\\

\paragraph{\textit{Marked chain polytopes}}
Ardila, Bliem and Salazar showed in \cite{ABS} that the classical FFLV polytopes from \cite{FFLV_Type_A,FFLV_Type_C} are a generalized version of Stanley's chain polytope \cite{Stanley}.
\begin{defn}
     Let $\mathcal{P}$ be a poset and $A\subseteq \mathcal{P}$ be a subset with a vector $\lambda_{\mathcal{P}}\in\mathbb{R}^A$  such that $\lambda_{\mathcal{P}}(a)\leq\lambda_{\mathcal{P}}(b)$ whenever $a\leq b$. Then the \textit{marked chain polytope} of $(\mathcal{P},A,\lambda_{\mathcal{P}})$ is defined to be 
     \begin{equation*}
         \mathcal{C}(\mathcal{P},A)_{\lambda_{\mathcal{P}}}=\left\{x = (x_{p_i})\in\mathbb{R}^{\mathcal{P}\setminus A}\ \middle|\ \sum_{i=1}^{k}x_{p_i}\leq\lambda_{\mathcal{P}}(b)-\lambda_{\mathcal{P}}(a)\,\text{ for all }a<p_1<\cdots<p_k<b\right\},
     \end{equation*}
     where $a,b\in A$ and $p_1,\dots,p_k\in \mathcal{P}\setminus A$.
\end{defn}
 One family of our polytopes, those corresponding to the weights $(\lambda|0^n)$, will be a truncated version of these polytopes as we will see in Proposition \ref{prop-strip-fflv-is-marked-chain}.
\section{The Extended Dyck path model and polytope}\label{sec-defn-polytope}
We are now going to consider the following subset of $\tilde{\Phi}^+\subseteq\Phi^+$, where
\begin{align*}
    \tilde{\Phi}^+_{\overline{0}}&=\left\{\e_i-\e_j\,\middle|\,1\leq i< j\leq m,\right\}\\
    \tilde{\Phi}^+_{\overline{1}}&=\Phi^+_{\overline{1}}.
\end{align*}

On this subset of roots we want to extend the Dyck path model from \cite{FFLV_Type_A}.
\begin{defn}
    An \textit{extended Dyck path} is a sequence of roots in $\tilde{\Phi}^+$
    \begin{equation*}
        \mathbf{p}=(p(0),\dots,p(u))
    \end{equation*}
    satisfying either of the following:
    \begin{enumerate}[label=\alph*)]
        \item{
            The sequence $\mathbf{p}$ is a Dyck path involving the even roots $\e_i-\e_j\in\tilde{\Phi}^+_{\overline{0}}$.
        }
        \item{
            The following conditions hold:
            \begin{enumerate}[label=(\roman*)]
                \item{
                    There exists a $1\leq k\leq u+1$ such that $(p(k),\dots,p(u))$ is a Dyck path involving the even roots $\e_i-\e_j\in\tilde{\Phi}^+$ and $p(k)=\e_i-\e_m$ for some $i\in[m-1]$.
                }
                \item{
                    For all $0\leq i\leq k-1$ we have $p(i)\in\Phi^{+}_{\overline{1}}$. 
                }
                \item{
                    If $p(k)=\e_i-\e_m$, then $p(k-1)=\e_i-\d_1$.
                }
                \item{
                    For all $1\leq i\leq k-1$ if $p(i)=\e_a-\d_b$, then $p(i-1)=\e_c-\d_{b+1}$ with $a\leq c$.
                }
            \end{enumerate}
        }
    \end{enumerate}
\end{defn}
\begin{figure}
\centering
$\begin{tikzcd}
	&&&&&& {\lambda_1} \\
	&&&&& {\varepsilon_1-\varepsilon_2} \\
	&&&& {\varepsilon_1-\varepsilon_3} && {\lambda_2} \\
	&&& {\varepsilon_1 - \varepsilon_4} && {\varepsilon_2-\varepsilon_3} \\
	&& {\textcolor{red}{\varepsilon_1-\delta_1}} && {\varepsilon_2-\varepsilon_4} && {\lambda_3} \\
	& {\textcolor{red}{\varepsilon_1 - \delta_2}} && {\textcolor{red}{\varepsilon_2-\delta_1}} && {\varepsilon_3-\varepsilon_4} \\
	{\textcolor{red}{\varepsilon_1 - \delta_3}} && {\textcolor{red}{\varepsilon_2-\delta_2}} && {\textcolor{red}{\varepsilon_3 - \delta_1}} && {\lambda_4} \\
	& {\textcolor{red}{\varepsilon_2 - \delta_3}} && {\textcolor{red}{\varepsilon_3-\delta_2}} && {\textcolor{red}{\varepsilon_4-\delta_1}} \\
	&& {\textcolor{red}{\varepsilon_3 - \delta_3}} && {\textcolor{red}{\varepsilon_4-\delta_2}} \\
	&&& {\textcolor{red}{\varepsilon_4 - \delta_3}}
	\arrow[from=2-6, to=1-7]
	\arrow[from=3-5, to=2-6]
	\arrow[from=3-7, to=2-6]
	\arrow[from=4-4, to=3-5]
	\arrow[from=4-6, to=3-5]
	\arrow[from=4-6, to=3-7]
	\arrow[from=5-3, to=4-4]
	\arrow[from=5-5, to=4-4]
	\arrow[from=5-5, to=4-6]
	\arrow[from=5-7, to=4-6]
	\arrow[from=6-2, to=5-3]
	\arrow[from=6-4, to=5-5]
	\arrow[from=6-6, to=5-5]
	\arrow[from=6-6, to=5-7]
	\arrow[from=7-1, to=6-2]
	\arrow[from=7-3, to=5-3]
	\arrow[from=7-3, to=6-4]
	\arrow[from=7-5, to=6-6]
	\arrow[from=7-7, to=6-6]
	\arrow[from=8-2, to=6-2]
	\arrow[from=8-2, to=7-3]
	\arrow[from=8-4, to=5-3]
	\arrow[from=8-4, to=6-4]
	\arrow[from=8-4, to=7-5]
	\arrow[from=8-6, to=7-7]
	\arrow[from=9-3, to=6-2]
	\arrow[from=9-3, to=7-3]
	\arrow[from=9-3, to=8-4]
	\arrow[from=9-5, to=5-3]
	\arrow[from=9-5, to=6-4]
	\arrow[from=9-5, to=7-5]
	\arrow[from=9-5, to=8-6]
	\arrow[from=10-4, to=6-2]
	\arrow[from=10-4, to=7-3]
	\arrow[from=10-4, to=8-4]
	\arrow[from=10-4, to=9-5]
\end{tikzcd}
$
\caption{Extended Dyck path pattern for $\gl(4|3)$-weights $(\lambda_1,\lambda_2,\lambda_3,\lambda_4,|0,0,0)$. The odd roots are marked in red to indicate that the corresponding $s_\alpha$ are bounded by $1$.}
\label{fig-gl42-strip-pattern}
\end{figure}
\begin{defn}
    Let $\lambda$ be a partition of length $l(\lambda)\leq m$. We define the polytope $P(\lambda|0^n)\subseteq\mathbb{R}^{\tilde{\Phi}^+}_{\geq0}$ to consist of all points $(s_{\alpha}\ |\ \alpha\in \tilde\Phi^+)$ satisfying the following conditions for every extended Dyck path $\mathbf{p}=(p(0),\dots,p(u))$:
    \begin{align}
            \sum_{t=0}^{u}s_{p(t)}&\leq\lambda_i-\lambda_j\hspace{1em}&&\text{ if }p(0)=\e_{j-1}-\e_{j}\text{ and }p(u)=\e_i-\e_{i+1}\label{eq-inequality-fflv-strip-even-dyck-path}\\
            \sum_{t=0}^{u}s_{p(t)}&\leq\lambda_r&&\text{ if }p(0)\in\Phi^+_{\overline{1}}\text{ and }p(u)=\e_r-\e_{r+1}\label{eq-inequality-fflv-strip-extended-dyck-path}\\
            s_{\e_{i}-\d_{{j}}}&\leq 1&&\text{ for all }(i,j)\in[m]\times[n]\label{eq-inequality-fflv-strip-odd-root-bound-1}
    \end{align}
    We shorten the notation for the coordintes by setting $s_{i,j}:=s_{\e_i-\e_j}$ for $\e_i-\e_j\in\tilde{\Phi}_{\overline{0}}^+$.
    
    We denote the set of lattice points of $P(\lambda|0^n)$ by $S(\lambda|0^n):=P(\lambda|0^n)\cap\Z^{\tilde{\Phi}^+}$.
\end{defn}
\begin{rem}\label{rem-strip-typical-matches-fourier-kus}
    If $\lambda_m\geq n$, which is exactly the case when $V(\lambda|0^n)$ is typical, then the conditions \eqref{eq-inequality-fflv-strip-extended-dyck-path} become redundant and thus $P(\lambda|0^n)$ matches the polytope from \cite[Theorem 1.1]{Fourier-Kus}
\end{rem}
\begin{prop}\label{prop-strip-fflv-is-marked-chain}
    Let $\lambda$ be a partition of length $l(\lambda)\leq m$. Then there exists a marked chain polytope that we call $\hat{P}(\lambda)\subseteq\mathbb{R}^{\tilde{\Phi}^+}_{\geq0}$ such that
    \begin{equation*}
        P(\lambda|0^n)=\hat{P}(\lambda)\cap\left\{\mathbf{s}\in\mathbb{R}^{\tilde{\Phi}^+}\,\big|\, s_\alpha\in\left[0,1\right]\text{ for all }\alpha\in\Phi^+_{\overline1
        }\right\}.    \end{equation*}
\end{prop}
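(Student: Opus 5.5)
We will construct the poset $\mathcal{P}$ explicitly from the pattern of Figure \ref{fig-gl42-strip-pattern}. The underlying set is $\tilde{\Phi}^+\sqcup A$, where $A=\{\lambda_1,\dots,\lambda_m,\lambda_{m+1}\}$ consists of marked elements. Here $\lambda_{m+1}$ is a new minimal marked element with $\lambda_{\mathcal{P}}(\lambda_{m+1})=0$, and we set $\lambda_{\mathcal{P}}(\lambda_i)=-\lambda_i$ after reversing the order. Equivalently, we orient the poset so that $\lambda_1$ is minimal: the marking must be weakly increasing along the order, and the constants on the right of \eqref{eq-inequality-fflv-strip-even-dyck-path}, \eqref{eq-inequality-fflv-strip-extended-dyck-path} are $\lambda_i-\lambda_j$ and $\lambda_r-0$, so we choose orientation and signs accordingly. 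The cover relations are the arrows in the figure.

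On the even part, they are the usual type $\mathsf{A}_{m-1}$ FFLV relations, namely $\e_i-\e_j\lessdot\e_i-\e_{j-1}$ and $\e_i-\e_j\lessdot\e_{i+1}-\e_j$, with $\lambda_i$ attached at the ends of the simple roots $\e_i-\e_{i+1}$. On the odd part, we put $\e_a-\d_b\lessdot\e_c-\d_{b-1}$ for $a\le c$, mirroring condition (iv), and $\e_i-\d_1\lessdot\e_i-\e_m$, mirroring (iii). We then make every odd root greater than $\lambda_{m+1}$, i.e. every odd root covers the bottom marked element, with value $0$. The marking is order preserving because $\lambda$ is a partition and $\lambda_m\ge 0$.

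The main step is to show that the maximal chains $a<p_1<\dots<p_k<b$ with $a,b\in A$ and $p_i\notin A$ correspond exactly to the extended Dyck paths, read in reverse, with the correct right-hand sides.
\begin{enumerate}
\item Chains from $\lambda_j$ to $\lambda_i$ through even roots only are precisely the Dyck paths of type a). This is the classical statement from \cite{ABS} for the type $\mathsf{A}$ FFLV polytope, and it gives \eqref{eq-inequality-fflv-strip-even-dyck-path}.
\item Chains starting at $\lambda_{m+1}$ must first pass through odd roots. They move via relations of type (iv), then enter the even part exactly via (iii), and must finish at some $\lambda_r$, which forces the last entry to be $\e_r-\e_{r+1}$. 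This gives \eqref{eq-inequality-fflv-strip-extended-dyck-path}.
\end{enumerate}

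The obstacle here is that the transitive closure may create chains that are not sequences of covers. It suffices to consider saturated (maximal) chains, since the inequalities for non-saturated chains are implied by those for saturated ones because all coordinates are nonnegative. So we only have to check that saturated chains use cover relations, and that every cover relation we defined appears as a consecutive step of some extended Dyck path. Conversely, every extended Dyck path must be a saturated chain. One subtlety needs care: condition (iv) allows $a\le c$, not only $a\in\{c,c-1\}$. We must therefore confirm that the odd-part covers are exactly these, i.e. that no $\e_{a'}-\d_{b}$ lies strictly between $\e_a-\d_b$ and $\e_c-\d_{b-1}$. This holds because elements with the same $\d$-index are incomparable: we impose no relations among them.

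Finally, the inequalities $s_\alpha\ge0$ are part of both polytopes. The only defining inequalities of $P(\lambda|0^n)$ not coming from chains are \eqref{eq-inequality-fflv-strip-odd-root-bound-1}, and these are exactly the intersection with $[0,1]$ on the odd coordinates. Comparing the two inequality systems then yields the equality.
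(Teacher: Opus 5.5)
Your plan matches the paper's proof. You read a marked poset off the pattern in Figure~\ref{fig-gl42-strip-pattern}, add a bottom element marked $0$ below all odd roots (the paper's $\hat 0$), mark the ends of the simple roots by the $\lambda_i$, identify chain inequalities with extended Dyck path inequalities, and impose $s_\alpha\le 1$ on odd coordinates separately. Your extra checks go beyond the paper, which only asserts $\mathcal{C}(\mathcal{P},A)_{\lambda_{\mathcal{P}}}=\hat P(\lambda)$: the odd relations change the $\d$-level by exactly one and so are covers, and non-saturated chains give only implied inequalities. However, the poset you actually write down is not the one in the figure, and for it your main step (chains $=$ extended Dyck paths) fails.

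(i) Condition (iv) says that passing from level $\d_b$ to $\d_{b-1}$ the $\e$-index weakly decreases, so the odd covers are $\e_c-\d_b\lessdot\e_a-\d_{b-1}$ with $a\le c$; you have the inequality reversed. For $\mathfrak{gl}(2|2)$, $\lambda=(1,1)$, take $s_{\e_1-\d_1}=s_{\e_2-\d_2}=1$ and all other coordinates $0$. This point satisfies every chain inequality of your poset, since there $\e_2-\d_2$ and $\e_1-\d_1$ are incomparable. It violates the path $(\e_2-\d_2,\e_1-\d_1,\e_1-\e_2)$ with bound $\lambda_1=1$.

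(ii) Your even covers $\e_i-\e_j\lessdot\e_i-\e_{j-1}$ and $\e_i-\e_j\lessdot\e_{i+1}-\e_j$ both shorten the root. So simple roots are maximal among even roots, and a chain between two $\lambda$'s is a single simple root. For $m=3$, $\lambda=(1,1,1)$, the point with $s_{1,3}=1$ and all else $0$ lies in your polytope, although $s_{2,3}+s_{1,3}+s_{1,2}\le\lambda_1-\lambda_3=0$. The second cover must be $\e_i-\e_j\lessdot\e_{i-1}-\e_j$.

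(iii) Your marking mixes two conventions. With $\lambda_{m+1}$ minimal, marked $0$, and chains from it to $\lambda_r$ having bound $\lambda_r$, you need $\lambda_{\mathcal{P}}(\lambda_i)=\lambda_i$ and $\lambda_{m+1}<\lambda_m<\dots<\lambda_1$. Marking $-\lambda_i$ with $\lambda_1$ minimal belongs to the reversed convention, where $\lambda_{m+1}$ would be maximal.

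(iv) Your version of (iii) covers only $i<m$, but the figure also has $\e_m-\d_1\lessdot\lambda_m$. So in your item 2 a chain from $\lambda_{m+1}$ need not end in $\e_r-\e_{r+1}$. The chain through $\e_m-\d_n,\dots,\e_m-\d_1$ gives $\sum_b s_{\e_m-\d_b}\le\lambda_m$. This is not literally among the paths in \eqref{eq-inequality-fflv-strip-extended-dyck-path}, but it is intended (it is used in Section~\ref{sec-hook}). It is also necessary: for $\mathfrak{gl}(2|1)$, $\lambda=(1)$, it is what forces $s_{\e_2-\d_1}=0$. You must therefore read \eqref{eq-inequality-fflv-strip-extended-dyck-path} as including paths ending at $\e_m-\d_1$.

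A smaller point: extended Dyck paths starting at level $\d_b$ with $b<n$ are chains but not saturated ones; you only need them to be chains. Once the relations are read off the figure consistently, your verification goes through.
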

\begin{proof}
    We construct the following marked poset $(\mathcal{P},A,\lambda_{\mathcal{P}})$:
    Define the sets
    \begin{equation*}
        \mathcal{P}:=\left\{x_{i,j}\mid i\in[m]\,,\,1\leq j\leq m+1-i\right\}\cup\left\{x_{i,\overline{j}}\mid (i,j)\in[m]\times\overline{[n]}\right\}\cup\left\{\hat{0},\hat{1}\right\},
    \end{equation*}
    and
    \begin{equation*}
        A=\left\{x_{1,j}\mid j\in[m]\right\}\cup\left\{\hat{0},\hat{1}\right\}.
    \end{equation*}
    We set the covering relations
    \begin{equation}
        \begin{aligned}[t]
            x_{i-1,j+1}\geq  &\ x_{i,j}\geq x_{i-1,j}\\
            x_{i,1}\geq &\ x_{i,\overline{1}}\\
            x_{i,\overline{j}}\geq &\ x_{k,\overline{j+1}}\\
             &\ x_{i,\overline{j}}\geq\hat{0}
        \end{aligned}
        \qquad 
        \begin{aligned}[t]
            &{}\\
            &\text{ for all }i\in[m]\\
            &\text{ for all }k\leq i\\
            &\text{ for all }(i,\overline{j})\in[m]\times\overline{[n]},
        \end{aligned}
    \end{equation}
    We always define the first line of relations when the set of indices are defined.
    We mark $x_{1,j}$ by $\lambda_j$, and we mark $\hat{0}$ by $0$ and $\hat{1}$ by 1. Let $\lambda_\mathcal{P}$ be the corresponding marking.
    Then we have that $\mathcal{C}(\mathcal{P},A)_{\lambda_{\mathcal{P}}}=\hat{P}(\lambda)$. Only considering those points $\mathbf{s}\in\hat{P}(\lambda)$ with $s_{\alpha}\leq 1$ for all $\alpha\in\Phi^{+}_{\overline{1}}$ gives us exactly $P(\lambda|0^n)$.
\end{proof}
\begin{rem}
    In the case of partitions $\lambda=(1^k)$ for $1\leq k\leq m$, we have that $P(\lambda|0^n)$ is a genuine chain polytope. This will be important in section \ref{sec-indep-column-strip}.
\end{rem}
\begin{defn}
    We define for a multi-exponent $\mathbf
    {s}\in\Z^{\tilde{\Phi}^+}$ the monomial
    \begin{equation*}
        f^\mathbf{s}:=\prod_{\alpha\in\tilde{\Phi}^+}f^{s_{\alpha}}_{\alpha}\in \Sym(\mathfrak{n}^{-})
    \end{equation*}
\end{defn}
Now, we can fully state the first result.
\begin{thm}\label{thm-fflv-strip}
    Let $\lambda$ be a partition of length $l(\lambda)\leq m$. Then the set
    \begin{equation*}
        \left\{f^{\mathbf{s}}\cdot v_{(\lambda|0^n)}\ \middle|\ \mathbf{s}\in S(\lambda|0^n)\right\}
    \end{equation*}
    provides a basis of the associated graded space $\gr V(\lambda|0^{n})$.
\end{thm}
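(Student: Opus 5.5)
The proof follows the two-step strategy already outlined in the introduction: spanning, and then linear independence via a Minkowski-sum argument. Throughout, we only use the roots in $\tilde{\Phi}^+$. The vectors $f_{\d_r-\d_s}$ act trivially on $v_{(\lambda|0^n)}$ because $\lambda^2=0$. Moreover, $f_{\d_r-\d_s}$ lies in $\mathfrak{n}^-$ and the ideal $I(\lambda)$ is stable under the adjoint action of $\mathfrak{n}^+$. One can therefore push these factors to the right in $\Sym(\mathfrak{n}^-)$ (which is supercommutative), and $\gr V(\lambda|0^n)$ is spanned by monomials $f^{\mathbf{s}}v$ with $\mathbf{s}\in\Z_{\geq0}^{\tilde\Phi^+}$. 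Since $f_\alpha^2=0$ in $\Sym(\mathfrak{n}^-)$ for $\alpha$ odd, we may further assume $s_\alpha\in\{0,1\}$ on odd roots.

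\textbf{Spanning.} I would fix a monomial order on $\Sym(\mathfrak{n}^-)$, the one announced in Definition \ref{def-monomial-order}, refining the classical FFLV homogeneous reverse lexicographic order and placing the odd roots after the even ones. Then, by induction on this order, I would show that every monomial $f^{\mathbf{s}}$ with $\mathbf{s}\notin S(\lambda|0^n)$ can be rewritten modulo $I(\lambda)$ as a combination of strictly smaller monomials.

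\begin{itemize}
\item For violated even Dyck path inequalities \eqref{eq-inequality-fflv-strip-even-dyck-path}, this is exactly the FFLV straightening. It uses the relations $f_{\e_i-\e_j}^{\lambda_i-\lambda_j+1}v=0$ coming from the $\gl(m)$-submodule, together with the operators $\mathrm{ad}(e_\beta)$, $\beta\in\Phi^+_{\overline0}$, acting on $\Sym(\mathfrak{n}^-)$ and preserving $I(\lambda)$.
\item For violated extended inequalities \eqref{eq-inequality-fflv-strip-extended-dyck-path}, I would start from the relation
\begin{equation*}
f_{\e_r-\d_1}^{\,}\prod f_{\e_i-\e_{r'}}\cdots
\end{equation*}
Concretely, the basic relation is $f_{\e_r-\e_{r+1}}^{a}\,f_{\e_r-\d_1}\cdots f_{\e_r-\d_b}\,v=0$ when $a+b>\lambda_r$. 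This holds because the $\e_r$-weight space cannot be lowered more than $\lambda_r$ times: $\lambda_r$ bounds the exponent of the weight along row $r$. It follows, for instance, from the explicit realisation $V(\lambda|0^n)\subseteq\bigotimes_i\bigwedge^{\lambda'_i}\C^{m|n}$ via Lemma \ref{lem-cartan-embedding}.
\item I would then apply sequences of $\mathrm{ad}(e_\beta)$ with $\beta$ even in $\gl(m)$ and in $\gl(n)$. These move the odd factors along the pattern in Figure \ref{fig-gl42-strip-pattern}, producing a relation whose leading term is the given monomial on an extended Dyck path.
\end{itemize}

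\textbf{Linear independence.} I would first prove the theorem for the fundamental weights $\lambda=(1^k)$, where $V(1^k|0^n)=\bigwedge^k\C^{m|n}$. There $P(1^k|0^n)$ is a genuine chain polytope, so its lattice points are antichains (chains) of the marked poset. One checks directly that the vectors $f^{\mathbf{s}}v$ have distinct leading terms in the explicit basis of wedge monomials $e_{i_1}\wedge\cdots\wedge e_{i_a}\wedge e_{\overline{j_1}}^{c_1}\cdots$. Combined with spanning, this yields a basis.

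For general $\lambda$, I would use the Cartan embedding of Lemma \ref{lem-cartan-embedding}, $V(\lambda+\mu)\hookrightarrow V(\lambda)\otimes V(\mu)$, which induces an embedding on associated graded spaces that is compatible with the order. I would then establish the Minkowski property
\begin{equation*}
S(\lambda|0^n)+S(\mu|0^n)=S(\lambda+\mu|0^n)\cap\{0,1\}^{\Phi^+_{\overline1}}.
\end{equation*}
Given this, the standard FFLV argument applies: the leading terms $f^{\mathbf{s}_1}v_\lambda\otimes f^{\mathbf{s}_2}v_\mu$ show that the vectors $f^{\mathbf{s}}(v_\lambda\otimes v_\mu)$ are linearly independent, and induction on $\lambda$ via $\lambda=(\lambda-1^{k})+(1^k)$ finishes the proof.

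The Minkowski decomposition is the main obstacle. The inclusion $\subseteq$ is immediate from the inequalities. For the reverse inclusion, I would use Proposition \ref{prop-strip-fflv-is-marked-chain}: marked chain polytopes satisfy integer decomposition through the transfer map to the marked order polytope. There one peels off a $0/1$ point, namely the indicator of a suitable filter, and maps it back. The difficulty is that the truncation $s_\alpha\le1$ on odd coordinates must survive the splitting. I would handle this by choosing the peeled point to contain exactly the odd coordinates equal to $1$ whenever feasible, and otherwise to contain none, checking the extended Dyck path inequalities case by case.
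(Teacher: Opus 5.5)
Your architecture is the paper's: order-compatible straightening for spanning, the exterior powers $\bigwedge^k\C^{m|n}$ as base case, a Minkowski decomposition inherited from the marked chain polytope, then the essential-monomial/FFLV tensor argument. Your direct wedge-basis check in the base case is a fine substitute for the count via $B_k$.

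\textbf{The gap: the seed relation for the extended inequalities.} The straightening for \eqref{eq-inequality-fflv-strip-extended-dyck-path} is the only new part of the spanning proof. You leave it at ``apply sequences of $\mathrm{ad}(e_\beta)$'', and the seed you do make concrete cannot work. Since $\e_r-\e_{r+1}$ is simple, $\partial_\alpha f_{\e_r-\e_{r+1}}=0$ for every $\alpha\in\Phi^+$, so no derivation moves the even factors off $\e_r-\e_{r+1}$. Likewise, $\gl(n)$-derivations only lower $\d$-indices ($\partial_{\d_p-\d_q}f_{\e_a-\d_q}=f_{\e_a-\d_p}$ with $p<q$). So starting from $\d_1,\dots,\d_b$ you never reach, e.g., an odd support on $\e_a-\d_2$ alone.

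The right seed is Lemma \ref{lem-monomial-in-ideal}: $f_{\e_r-\e_m}^{\lambda_r+1-k}\prod_{j\in I}f_{\e_r-\d_j}$. It uses the highest root $\e_r-\e_m$ of the relevant triangle, and $I$ is exactly the set of $\d$-indices in the odd support of $\mathbf{s}$. Your nonnegative-weight argument proves this relation equally well. You then apply $\partial_{\e_c-\e_m}$'s along row $r$ and $\partial_{\e_r-\e_a}$'s down to the rows of $\mathbf{s}$, as in Proposition \ref{prop-straightening}.

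\textbf{The monomial order matters.} The leading-term analysis depends on the order of Definition \ref{def-monomial-order}, which is not ``odd after even''. Variables are grouped by row with row $m$ highest, and within row $a$ the $f_{\e_a-\d_j}$ sit just above the $f_{\e_a-\e_b}$. Here is an example. Take $\gl(3|1)$ and $\lambda=(1,1,0)$. Applying $\partial_{\e_1-\e_2}$ to $f_{\e_1-\e_3}f_{\e_1-\d_1}\in I(1,1,0|0)$ gives $f_{\e_2-\e_3}f_{\e_1-\d_1}+f_{\e_1-\e_3}f_{\e_2-\d_1}\in I(1,1,0|0)$. The second monomial violates \eqref{eq-inequality-fflv-strip-extended-dyck-path} for $r=1$, while the first is a lattice point.
\begin{itemize}
\item In the paper's order the violating monomial leads.
\item If the odd variables sat below the FFLV-ordered even ones, then $f_{\e_2-\e_3}>f_{\e_1-\e_3}$ would make the lattice point lead, and the straightening would fail.
\item Your seed $f_{\e_1-\e_2}f_{\e_1-\d_1}$ only ever produces monomials divisible by $f_{\e_1-\e_2}$, so it never reaches the violating monomial.
\end{itemize}

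\textbf{The Minkowski identity is misstated.} As written, $S(\lambda|0^n)+S(\mu|0^n)=S(\lambda+\mu|0^n)\cap\{0,1\}^{\Phi^+_{\overline1}}$ is false. For $\gl(1|1)$ and $\lambda=\mu=(1)$, the left side contains the point with $s_{\e_1-\d_1}=2$. The inclusion you call immediate is precisely the false one. The correct statement is Proposition \ref{prop-minkowski-decomp}: $S(\lambda+\mu|0^n)=(S(\lambda|0^n)+S(\mu|0^n))\cap\{0,1\}^{\Phi^+_{\overline1}}$. Its substantive direction is $S(\lambda+\mu|0^n)\subseteq S(\lambda|0^n)+S(\mu|0^n)$, which is exactly what the essential-monomial argument needs.

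The ``main obstacle'' you identify does not exist. Write $\mathbf{s}\in S(\lambda+\mu|0^n)\subseteq\hat S(\lambda+\mu)$ as $\mathbf{s}^1+\mathbf{s}^2$ with $\mathbf{s}^i$ in the untruncated sets $\hat S$ (Proposition \ref{prop-minkowski-decomp-chain}). The $\mathbf{s}^i$ are nonnegative integer vectors whose sum has odd coordinates at most $1$. So their odd coordinates automatically lie in $\{0,1\}$, and hence $\mathbf{s}^i\in S$. No special choice of the peeled-off point and no case analysis is needed.
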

We are going to prove this theorem as follows:
\begin{enumerate}
    \item{
        In section \ref{sec-spanning-strip}, we show that our set of monomials is a spanning set of $\gr V(\lambda|0^n)$.
    }
    \item{
        In section \ref{sec-indep-column-strip}, we prove that our set of monomials is linearly independent in $\gr V(\lambda|0^n)$ for $\lambda=(1^k),\ 1\le k\le m$, i.e., for $V(\lambda|0^n) \simeq \bigwedge^k\C^{m|n}$.
    }
    \item{
        In section \ref{sec-indep-strip}, we argue how the previous step yields us the linear independence for general $\lambda$ by proving a Minkowski decomposition of our polytopes.
    }
\end{enumerate}
\section{Spanning property for the strip $(\lambda|0^n)$}\label{sec-spanning-strip}

The goal of this section is to prove that the set of vectors $\{f^{\mathbf{s}}\cdot v_\lambda\ |\ \textbf{s}\in S(\lambda|0^n)\}$ is in fact a spanning set of $\mathrm{gr}\,V(\lambda|0^n)$. Before we go into proving this, we believe that it is instructive to recall how Feigin, Fourier and Littelmann proved that their set of vectors was spanning in \cite{FFLV_Type_A}:

For any dominant integral $\mathfrak{sl}_{n+1}$-weight $\lambda$, they show that any monomial $f^{\mathbf{s}}$ with $\mathbf{s}\not\in S(\lambda)$ can be generated by the monomials corresponding to elements of $S(\lambda)$. To achieve this, they proceeded as follows:
\begin{enumerate}
    \item{
        Fix a monomial well-ordering on the set of monomials $f^{\mathbf{m}}\in \Sym({\mathfrak{n}^{-}_{\mathfrak{sl}_{n+1}}})$.
    }
    \item{
        Find appropiate monomials inside the ideal $f^{\mathbf{s'}}\in I(\lambda)$.
    }
    \item{
        Apply a sequence of derivations $\partial_\alpha$, with $\alpha\in\Phi_{\mathfrak{sl}_{n+1}}^+$, to $f^{\mathbf{s'}}$ such that we arrive at a linear combination, i.e. a straightening law,
        \begin{equation*}
            f^{\mathbf{s}}+\sum_{\mathbf{s}\succ\mathbf{t}}c_{\mathbf{t}}f^{\mathbf{t}}\in I(\lambda).
        \end{equation*}
        This allows one to then use the well-ordering $\succ$ to get that the set is a spanning set.
    }
\end{enumerate}

\vspace{4mm}
Our proof will follow a very similar recipe. We begin by imposing the following monomial order on $\Sym(\mathfrak n^-)$:
\begin{defn}\label{def-monomial-order}
    We define the \textit{monomial order} $<$ on $\Sym({\mathfrak{n}}^-)$ to be the homogenous lexicographic order where we order the variables
    \begin{multline*}
        f_{\d_{m-1}-\d_m}> f_{\d_{m-2}-\d_m}> f_{\d_{m-2}-\d_{m-1}}> f_{\d_{m-3}-\d_m}>\ldots >f_{\d_{1}-\d_m}>\ldots> f_{\d_{1}-\d_2}\\
        >f_{\e_{m}-\d_n}>f_{\e_m-\d_{n-1}}>\cdots>f_{\e_m-\d_1}>f_{\e_{m-1}-\d_{n}}>\cdots f_{\e_{m-1}-\d_{1}}>f_{\e_{m-1}-\e_{m}}>\\f_{\e_{m-2}-\d_n}>\cdots f_{\e_{m-2}-\d_1}>f_{\e_{m-2}-\e_m}>f_{\e_{m-2}-\e_{m-1}}>\cdots >f_{\e_{1}-\e_{2}}
    \end{multline*}
    Intuitively speaking, we go along the (bottom left to top right) diagonals from bottom to top in patterns of the form Figure \ref{fig-gl42-strip-pattern} (when no $f_{\d_i-\d_j}$ variables are involved) or \eqref{eq-fflv-pattern-hook-3|2}.
\end{defn}
Next, we need to find an appropriate monomial inside the ideal $I(\lambda|0^n)\subseteq \Sym({\mathfrak{n}}^-)$ to which we will apply the derivations.
\begin{lem}\label{lem-monomial-in-ideal}
    For any $1\leq i\leq m$ and $I\subseteq{[n]}$ such that $|I|=k$, we have
    \begin{equation}\label{eq-monomial-in-ideal}
        f^{\lambda_{i}+1-k}_{\e_i-\e_m}\prod_{j\in I}f_{\e_i-\delta_j}\in I(\lambda|0^n).
    \end{equation}
\end{lem}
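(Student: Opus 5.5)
The plan is to prove the stronger statement that the corresponding ordered product in $\mathcal{U}(\mathfrak{n}^-)$ already annihilates $v_{(\lambda|0^n)}$ in $V(\lambda|0^n)$, using only a weight argument. Write $a:=\lambda_i+1-k$ and
\begin{equation*}
F:=f_{\e_i-\e_m}^{a}\prod_{j\in I}f_{\e_i-\d_j}\in\mathcal{U}(\mathfrak{n}^-),
\end{equation*}
with the odd factors taken in increasing order of $j$.

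First I would justify that $F$ represents the monomial in \eqref{eq-monomial-in-ideal}, up to sign. The vectors involved are the matrix units $E_{m,i}$ and $E_{m+j,i}$ for $j\in I$. Since they all share the column index $i$ and have row indices different from $i$, they pairwise supercommute. So the odd factors anticommute, and the even factor commutes with all of them. Hence $F$ is, up to sign, the ordered PBW monomial of degree $d=a+k$. Its image in $\gr\,\mathcal{U}(\mathfrak{n}^-)_d\simeq\Sym(\mathfrak{n}^-)_d$ is, up to sign, the monomial of the lemma. The action of that monomial on $v_{(\lambda|0^n)}$ in $\gr V(\lambda|0^n)$ is the class of $F\cdot v_{(\lambda|0^n)}$ in $\mathcal{U}(\mathfrak{n}^-)_d v/\mathcal{U}(\mathfrak{n}^-)_{d-1}v$. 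Therefore, if $F\cdot v_{(\lambda|0^n)}=0$ in $V(\lambda|0^n)$, the monomial lies in $I(\lambda|0^n)$.

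Second, I would compute the weight of $F\cdot v_{(\lambda|0^n)}$:
\begin{equation*}
(\lambda|0^n)-a(\e_i-\e_m)-\sum_{j\in I}(\e_i-\d_j).
\end{equation*}
Its $\e_i$-coefficient is $\lambda_i-(\lambda_i+1-k)-k=-1$.

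Third, $V(\lambda|0^n)$ is covariant, so by the remark following the definition of covariant modules it is a direct summand of $(\C^{m|n})^{\otimes N}$ for some $N$. Every weight of $(\C^{m|n})^{\otimes N}$ is a sum of $N$ weights $\e_t$ with $t\in K$, so all of its coefficients are nonnegative. Hence the weight space of weight $\lambda-\dots$ with $\e_i$-coefficient $-1$ is zero, which forces $F\cdot v_{(\lambda|0^n)}=0$. I do not expect any real obstacle here; the argument is just the weight count plus the polynomiality of covariant weights.

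Two degenerate cases need separate handling.
\begin{itemize}
\item If $a<0$, the displayed power is not defined. I would read the statement as vacuous there, or note that $\prod_{j\in I}f_{\e_i-\d_j}$ alone already produces $\e_i$-coefficient $\lambda_i-k<0$, so it and all its multiples lie in the ideal.
\item If $i=m$, the factor $f_{\e_m-\e_m}$ is to be read as $1$. The even factor then contributes nothing to the weight, and the $\e_m$-coefficient is $\lambda_m-k$. This is negative precisely when $k\geq\lambda_m+1$. For $k\leq\lambda_m$ I would check that the exponent convention makes the statement consistent, and that in the genuinely nontrivial range the same weight argument applies.
\end{itemize}
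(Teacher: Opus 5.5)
Your argument is correct and is essentially the paper's own. The paper writes $v_{(\lambda|0^n)}$ via the Cartan embedding as a tensor product of wedges $e_1\wedge\cdots\wedge e_j$, and notes that each of the $\lambda_i+1$ factors must use up one of the exactly $\lambda_i$ occurrences of $e_i$. That count is precisely your $\e_i$-coefficient of the weight in a polynomial representation; your version just makes the weight bookkeeping and the passage from $\mathcal{U}(\mathfrak{n}^-)$ to $\Sym(\mathfrak{n}^-)$ explicit.

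The one loose end is $i=m$, where $\e_m-\e_m$ is not a root. With your reading $f_{\e_m-\e_m}=1$, the claim is actually false for $k\le\lambda_m$. For example, $f_{\e_m-\d_1}\cdot v_{(1^m|0^n)}=\pm e_1\wedge\cdots\wedge e_{m-1}\wedge e_{\overline{1}}$ is nonzero even in $\gr V(1^m|0^n)$, so $f_{\e_m-\d_1}\notin I(1^m|0^n)$. So rather than promising to ``check consistency'', restrict to $i<m$, which is the only case the paper uses (with $i=1$).
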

\begin{proof}
    We consider the highest weight vector of $V(\lambda|0^{n})$ via the Cartan embedding from Lemma \ref{lem-cartan-embedding}
    \begin{equation*}
        v_\lambda=(e_1)^{\otimes(\lambda_1-\lambda_2)}\otimes(e_1\wedge e_2)^{\otimes(\lambda_2-\lambda_3)}\otimes\cdots\otimes(e_1\wedge e_2\wedge\cdots\wedge e_m)^{\otimes(\lambda_{m-1}-\lambda_{m})}.
    \end{equation*}
    Then each factor from \eqref{eq-monomial-in-ideal} can only act on tensor factors involving the basis vector $e_i$ of which there are exactly $\lambda_i$ many in $v_\lambda$. Hence, this monomial acts as $0$ on $v_\lambda$ and is therefore in $I(\lambda|0^n)$.
\end{proof}
\begin{rem}
    As opposed to the classical case, the left ideal $I(\lambda|0^n)\subset\mathcal{U}(\gl(m|n))$ annihilating the highest weight vector $v_{(\lambda|0^n)}\in V(\lambda|0^n)$ is not explicitly described via generators. Hence, we are forced to proceed in this more elementary manner using the Cartan component embedding to find appropriate monomials in $I(\lambda|0^n)$.
\end{rem}
Finally, we introduce the set of differential operators, which when applied to our monomial in $I(\lambda|0^n)$ give the desired straightening law.

We recall that on \cite[p. 78]{FFLV_Type_A}, Feigin, Fourier and Littelmann defined for a positive $\mathfrak{sl}_{n+1}$-root $\alpha\in\Phi_{\mathfrak{sl}_{n+1}}^+$ a differential operator $\partial_{\alpha}:\Sym({\mathfrak{n}^{-}_{\mathfrak{sl}_{n+1}}})\to \Sym({\mathfrak{n}^{-}_{\mathfrak{sl}_{n+1}}})$ that behaves like the adjoint action by $e_{\alpha}$:
\begin{equation*}
    \partial_{\alpha}f_\beta=\begin{cases}
        f_{\beta-\alpha},\hspace{1em}&\text{ if }\beta-\alpha\in\Phi_{\mathfrak{sl}_{n+1}}^+\\
        0,\hspace{1em}&\text{ otherweise}
    \end{cases}
\end{equation*}
We define our set of operators similarly, as follows
\begin{defn}
    For a positive root $\alpha\in\Phi^+$, we define the \textit{differential operator $\partial_{\alpha}$} on $\Sym(\mathfrak{n}^-)$ acting on a root vector $f_{\beta}$ via
    \begin{equation*}
        \begin{aligned}
            \partial_{\alpha}f_{\beta}=\begin{cases}
            f_{\beta-\alpha},\hspace{1em}&\text{ if $\beta-\alpha\in\Phi^+$}\\
            0,\hspace{1em}&\text{ otherwise}
        \end{cases},
        \end{aligned}
    \end{equation*}
\end{defn}
\begin{rem}
    If $\alpha=\e_a-\e_b\in\tilde
    \Phi^+_{\overline{0}}$, then we will denote $\partial_{\alpha}$ as $\partial_{a,b}$.

    Furthermore, if $\beta=\e_i-\d_j\in\Phi^+_{\overline{1}}$ and $\alpha=\e_a-\e_b\in\tilde{\Phi}^+_{\overline{0}}$, this action can be simplified to
    \begin{equation*}
        \partial_{a,b}f_{\e_i-\d_j}=\delta_{i,a}\cdot f_{\e_b-\d_j}.
    \end{equation*}
\end{rem}
Now, we can show that our set of vectors is a  generating set by proving the straightening law. We only need to consider a violation to the extended Dyck path inequality \eqref{eq-inequality-fflv-strip-extended-dyck-path} corresponding to an extended Dyck path beginning in an odd root, since for purely even Dyck paths we can apply \cite[Proposition 1]{FFLV_Type_A}.
\begin{prop}\label{prop-straightening}
    Let $\mathbf{p}=(p(0),\dots,p(u))$ be an extended Dyck path with $p(0)=\e_a-\d_b$ and $p(u)=\e_r-\e_{r+1}$.
    Let $\mathbf{s}$ be an multi-exponent supported on $\mathbf{p}$. Assume further that
    \begin{equation*}
        \sum_{t=0}^{u}s_{p(t)}>\lambda_r.
    \end{equation*}
    Then there exist some constants $c_{\mathbf{t}}$ labeled by multi-exponents $\mathbf{t}$ such that
    \begin{equation*}
        f^{\mathbf{s}}+\sum_{\mathbf{s}\succ\mathbf{t}}c_{\mathbf{t}}f^{\mathbf{t}}\in I(\lambda|0^n)
    \end{equation*}
\end{prop}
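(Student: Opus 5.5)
We follow the FFLV recipe: start from the monomial of Lemma \ref{lem-monomial-in-ideal} and apply derivations $\partial_\alpha$ to reach $f^{\mathbf{s}}$ as the leading term with respect to the order of Definition \ref{def-monomial-order}. Write the odd part of $\mathbf{p}$ as $p(0)=\e_{a_0}-\d_{b},\,p(1)=\e_{a_1}-\d_{b-1},\dots,p(k-1)=\e_{a_{k-1}}-\d_1$ with $a_0\ge a_1\ge\dots\ge a_{k-1}=:i$. By condition (iii), the even part $(p(k),\dots,p(u))$ is a Dyck path starting at $\e_i-\e_m$ and ending at $\e_r-\e_{r+1}$. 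Since every odd root is bounded by $1$, we may assume $s_{p(t)}\in\{0,1\}$ for $t<k$. (If some $s_{p(t)}\ge 2$, then already $f_{p(t)}^2=0$ in $\Sym(\mathfrak n^-)$, because odd root vectors square to zero.) Let $J=\{t<k\mid s_{p(t)}=1\}$ and $\ell=|J|$, and let $S_{\mathrm{ev}}=\sum_{t\ge k}s_{p(t)}$. The hypothesis then reads $S_{\mathrm{ev}}+\ell>\lambda_r$.

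\emph{Step 1.} Choose the starting element. Take $I=\{b-t\mid t\in J\}\subseteq[n]$, so $|I|=\ell$, and take the row index $r$ instead of $i$. Lemma \ref{lem-monomial-in-ideal} works verbatim with the even root $\e_r-\e_m$: the factors $f_{\e_r-\d_j}$ and $f_{\e_r-\e_m}$ only act on tensor factors containing $e_r$, of which there are $\lambda_r$. This gives
\begin{equation*}
F:=f_{\e_r-\e_m}^{S_{\mathrm{ev}}}\prod_{j\in I}f_{\e_r-\d_j}\in I(\lambda|0^n),
\end{equation*}
since $S_{\mathrm{ev}}+\ell\ge\lambda_r+1$. (If $S_{\mathrm{ev}}$ exceeds the bound, multiplying by a further monomial keeps us in the ideal.) The ideal $I(\lambda|0^n)$ is stable under the operators $\partial_{a,c}$ for $a<c\le m$, because these act as $\mathrm{ad}\,e_{\e_a-\e_c}$ and $e_{\e_a-\e_c}$ kills $v_\lambda$. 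The same holds for the odd raising operators, although we only intend to use the even ones.

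\emph{Step 2.} Move each odd factor into place. For each $t\in J$ we need to turn $f_{\e_r-\d_{b-t}}$ into $f_{\e_{a_t}-\d_{b-t}}$. Since $a_t\ge i\ge r$, we apply $\partial_{r,a_t}$, using $\partial_{r,a}f_{\e_r-\d_j}=f_{\e_a-\d_j}$. The even factors are handled exactly as in \cite[Proposition 1]{FFLV_Type_A}. Starting from $f_{\e_r-\e_m}^{S_{\mathrm{ev}}}$, apply the operators $\partial_{r,\cdot}$ and the ``$\partial_{\cdot,m}$-type'' operators; the latter do not exist as raising operators, so instead use $\partial_{c,c'}$ acting on the first index. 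The aim is to distribute the exponent along the Dyck path $(p(k),\dots,p(u))$, which goes from $\e_i-\e_m$ to $\e_r-\e_{r+1}$. Concretely, we apply the product
\begin{equation*}
D=\prod_{t\in J}\partial_{r,a_t}\cdot\prod_{q}\partial_{\beta_q}^{n_q},
\end{equation*}
where the $\beta_q$ and $n_q$ are chosen as in FFLV so that the even part of $D F$ contains $\prod_{t\ge k}f_{p(t)}^{s_{p(t)}}$.

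\emph{Step 3.} Identify the leading term; this is the main obstacle. Expanding $DF$ by the Leibniz rule (with signs for the odd factors, which only affect coefficients) gives a sum of monomials, and one of them is $c\,f^{\mathbf{s}}$ with $c\neq0$. We must show that every other monomial $f^{\mathbf{t}}$ satisfies $\mathbf{t}\prec\mathbf{s}$. The interaction is that even operators $\partial_{r,a}$ can act both on odd factors and on the even factors $f_{\e_r-\e_c}$. The plan is to argue by comparison on the diagonals in the order of Definition \ref{def-monomial-order}. Moving a row index $r$ to a larger $a$ pushes a root onto a lower diagonal, and lower diagonals are larger in the order. The chosen assignment, in which each odd factor receives its own $\partial_{r,a_t}$ and the even ones are distributed along the Dyck path, puts maximal weight on the largest variables. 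This is where the monotonicity conditions (iii)--(iv), namely $a_t$ weakly decreasing with $\d$-index decreasing, are used to rule out alternative assignments. To get $c\ne0$, note that $\mathbf{s}$ arises only from that assignment up to permutations of identical factors, so no cancellation occurs; the odd signs are uniform there. I expect to handle this by induction on $\ell$, peeling off $p(0)$ and reducing to a shorter extended Dyck path, with the base case $\ell=0$ being the FFLV statement.
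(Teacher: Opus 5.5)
Your outline follows the paper's route: take the monomial of Lemma~\ref{lem-monomial-in-ideal} in the row $r$ where the path ends, apply derivations, and read off the leading term. Step 1 is fine. Using the exponent $S_{\mathrm{ev}}$ even handles violations larger than one directly, where the paper only says ``without loss of generality''. There are, however, two genuine gaps.

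\textbf{The even part of Step 2 is wrong as stated.} The derivations that lower the second index do exist. For $r<c<m$ the root $\e_c-\e_m$ is positive, $\partial_{c,m}f_{\e_r-\e_m}=f_{\e_r-\e_c}$, and $\partial_{c,m}$ kills every $f_{\e_r-\d_j}$. Your replacement, using only moves that change the first index, cannot work. Starting from $f_{\e_r-\e_m}^{S_{\mathrm{ev}}}$, such moves only ever produce even factors of the form $f_{\e_p-\e_m}$. So $f^{\mathbf s}$ gets coefficient $0$ as soon as $\mathbf s$ has a nonzero even entry off the edge $\{\e_p-\e_m\}$, for instance on $p(u)=\e_r-\e_{r+1}$ when $r+1<m$. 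The paper uses exactly the operators you discard, and the order of application matters for the bookkeeping:
\begin{itemize}
\item first it applies $\partial_{c,m}$ raised to the column sums of $\mathbf s$; this spreads the power of $f_{\e_r-\e_m}$ along row $r$ and leaves the odd factors untouched (the element $y$);
\item only then does it apply the first-index operators $\partial_{r,c}$, with exponents $\widetilde{s_{c,\bullet}}=s_{c,\bullet}+|\mathrm{OR}_c(\mathbf s)|$, which serve the even and the odd factors of row $c$ simultaneously.
\end{itemize}

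\textbf{Step 3 is the actual content of the proposition, and you only announce it.} The missing argument runs as follows.
\begin{itemize}
\item After the first stage, each $\partial_{r,c}$ acts only on row-$r$ variables and never touches a factor already moved to a row $c>r$. So each term of the expansion corresponds to an assignment of the row-$r$ factors of $y$ to target rows. The coefficients are nonnegative integers because the derivations are even, so nothing cancels.
\item The order of Definition~\ref{def-monomial-order} compares row $m$ first, then row $m-1$, and so on. Inside row $c$ it ranks $f_{\e_c-\d_n}>\cdots>f_{\e_c-\d_1}$ above all even variables.
\item Hence the leading term is obtained greedily from the top row $a=\max\tau$. There the lex-largest choice sends the copies of $\partial_{r,a}$ first to the odd factors with the largest $\d$-indices in $I$. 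Condition (iv) of an extended Dyck path (row index weakly decreasing as the $\d$-index decreases) is exactly what makes these the set $\mathrm{OR}_a(\mathbf s)$.
\item One then inducts on $|\tau|$, finishing with the FFLV greedy argument for the even part.
\end{itemize}

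This induction must be on the leading-term statement rather than on the proposition itself. It must also remove a whole row of odd roots, not $p(0)$ alone, for two reasons. Deleting an odd root with $s_{p(0)}=1$ lowers the path sum by one and can destroy the violation hypothesis. And odd roots in the same row are produced by the same operator $\partial_{r,a}$, so the choice of which factors receive it has to be made jointly.
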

\begin{proof}
    We can restrict ourselves to the case of a violation of the inequalities involing extended Dyck paths ending in $\e_1-\e_2$ and the inequality being violated by $1$, i.e.
    \begin{equation*}
        \sum_{t=0}^{u}s_{p(t)}=\lambda_1+1.
    \end{equation*}
    
    We define the sets
    \begin{equation*}
        \mathrm{OR}_i(\mathbf{s})=\{j\in[n]\mid s_{\e_i-\d_j}=1\},
    \end{equation*}
    as well as
    \begin{equation*}
        I:=\bigcup_{1\leq i\leq m}\mathrm{OR}_i(\mathbf{s})
    \end{equation*}
    and fix $k:=|I|$.
    
    Intuitively speaking, the set $I$ is keeping track of all horizontal levels of odd roots where $\mathbf{s}$ has non-empty support and the sets $\mathrm{OR}_i(\mathbf{s})$ are then the appropriate vertical level sets.
    
    We consider now the monomial from Lemma \ref{lem-monomial-in-ideal} 
    \begin{equation*}
        f^{\lambda_{1}+1-k}_{\e_1-\e_m}\prod_{j\in I}f_{\e_1-\delta_j}.
    \end{equation*}
    We define the following quantities for $1\leq i,j\leq m-1$
    \begin{equation*}
        s_{\bullet,j}=\sum_{i=1}^{j}s_{i,j},\hspace{1em}s_{i,\bullet}=\sum_{j=i}^{m-1}s_{i,j}.
    \end{equation*}
    We define the set
     \begin{equation*}
        \tau=\{i\in[m]\mid\mathrm{OR}_i(\mathbf{s})\neq\emptyset\},
    \end{equation*}
    and note that in fact
    \begin{equation*}
        s_{i,\bullet}=0\text{ for }i>\min\tau.
    \end{equation*}
    We consider first the vector
    \begin{equation*}
        y:=\partial^{s_{\bullet,m-2}}_{m-1,m}\cdots\partial^{s_{\bullet,1}}_{2,m}\left(f^{\lambda_{1}+1-k}_{\e_1-\e_m}\prod_{j\in I}f_{\e_1-\delta_j}\right)\in I(\lambda|0^n),
    \end{equation*}
    which actually
    \begin{equation*}
        y=f_{\e_1-\e_2}^{s_{\bullet,1}}f^{s_{\bullet,2}}_{\e_1-\e_3}\cdots f_{\e_1-\e_m}^{s_{\bullet,m-1}}\prod_{j\in I}f_{\e_1-\delta_j}\in I(\lambda|0^n).
    \end{equation*}
    Before we state our next sequence of derivations, we want to introduce one more piece of notation. 
    Namely, we define the new quantities
    \begin{equation*}
        \widetilde{s_{i,\bullet}}:=s_{i,\bullet}+|\mathrm{OR}_i(\mathbf{s})|.
    \end{equation*}
    We now want to apply the following sequence of derivations to $y$
    \begin{equation*}
        A=\partial_{1,2}^{\widetilde{s_{2,\bullet}}}\partial_{1,3}^{\widetilde{s_{3},\bullet}}\cdots\partial^{\widetilde{s_{a,\bullet}}}_{1,a}\left(f_{\e_1-\e_2}^{s_{\bullet,1}}f^{s_{\bullet,2}}_{\e_1-\e_3}\cdots f_{\e_1-\e_m}^{s_{\bullet,m-1}}\prod_{j\in I}f_{\e_1-\delta_j}\right)\in I(\lambda|0^n),
    \end{equation*}
    where $a=\max\tau$.
    This element $A$ will allow us to arrive at our conclusion by proving the following claim.
    \begin{claim}
        We have that
            \begin{equation*}
                A=f^{\mathbf{s}}+\sum_{\mathbf{s}\succ\mathbf{t}}c_{\mathbf{t}}f^{\mathbf{t}}\in I(\lambda|0^n),
            \end{equation*}
        for some constants $c_{\mathbf{t}}\in\C$.
    \end{claim}
    \begin{proof}[Proof of Claim]
    \renewcommand{\qedsymbol}{}
    We are going to prove this by induction on the number $q:=|\tau|$.
    
    In the case $q=1$, we have that $\mathbf{p}$ is of the form
    \begin{equation*}
        \mathbf{p}=(\e_a-\d_{b_1},\e_a-\d_{b_2},\dots,\e_a-\d_{b_k},\text{ even roots}).
    \end{equation*}
    And our element $A$ can be written as
    \begin{equation*}
        A=\partial_{1,2}^{s_{2,\bullet}}\partial_{1,3}^{s_{3},\bullet}\cdots\partial^{s_{a,\bullet}+k}_{1,a}\left(f_{\e_1-\e_2}^{s_{\bullet,1}}f^{s_{\bullet,2}}_{\e_1-\e_3}\cdots f_{\e_1-\e_m}^{s_{\bullet,m-1}}\prod_{j\in I}f_{\e_1-\delta_j}\right).
    \end{equation*}
    Expanding the derivation $\partial^{k}_{1,a}$ on $y$ yields us
    \begin{equation}\label{eq-straightening-inductio-base}
        \sum_{J\subseteq I}c_J\partial^{k-|J|}_{1,a}\left(f_{\e_1-\e_2}^{s_{\bullet,1}}f^{s_{\bullet,2}}_{\e_1-\e_3}\cdots f_{\e_1-\e_m}^{s_{\bullet,m-1}}\right)\prod_{j\in J}f_{\e_a-\delta_j}\prod_{j\not\in J}f_{\e_1-\delta_j},
    \end{equation}
    for some non-zero constants $c_J\neq0$.
    Now note that applying $\partial^{k-|J|}_{1,a}$ will only result in a linear combination of monomials whose 
    variables are bounded by $f_{\e_a-\e_m}$ in the monomial order $<$.
    
    We can see this by remembering that for any positive even root $\e_c-\e_d\in\Phi^+_{\overline{0}}$, we have that $\e_c-\e_d-\e_1-\e_g$ is only a positive root in $\Phi^+_{\overline{0}}$ if it is equal to $\e_g-\e_d$.

    What this means in particular is that the leading term of \eqref{eq-straightening-inductio-base} is the leading term of 
    \begin{equation*}
        \left(f_{\e_1-\e_2}^{s_{\bullet,1}}f^{s_{\bullet,2}}_{\e_1-\e_3}\cdots f_{\e_1-\e_m}^{s_{\bullet,m-1}}\right)\prod_{j\in I}f_{\e_a-\delta_j}.
    \end{equation*}
    Now with the same argument we can conclude that the leading term of $A$ is then the leading term of
    \begin{equation*}
        \partial_{1,2}^{s_{2,\bullet}}\partial_{1,3}^{s_{3},\bullet}\cdots\partial^{s_{a,\bullet}}_{1,a}\left(f_{\e_1-\e_2}^{s_{\bullet,1}}f^{s_{\bullet,2}}_{\e_1-\e_3}\cdots f_{\e_1-\e_m}^{s_{\bullet,m-1}}\right)\prod_{j\in I}f_{\e_a-\delta_j}.
    \end{equation*}
    Using the arguments from \cite[Proposition 1]{FFLV_Type_A} we can thus conclude that the leading term of $A$ is then our desired monomial.

    Now for the induction step, we let
    \begin{equation*}
        \tau=\{t_1,\dots,t_{q}=a\}.
    \end{equation*}
    Hence, our element $A$ is now of the form
    \begin{equation*}
        A=\partial_{1,2}^{s_{2,\bullet}}\partial_{1,3}^{s_{3},\bullet}\cdots\partial^{s_{t_{1}-1,\bullet}}_{1,t_{1}-1}\partial^{s_{t_{1},\bullet}+k_{t_{1}}}_{1,t_{1}}\partial^{k_{t_{2}}}_{1,t_{2}}\cdots\partial^{k_{t_{q}-1}}_{1,t_{q}-1}\partial^{k_{t_{q}}}_{1,a}\left(f_{\e_1-\e_2}^{s_{\bullet,1}}f^{s_{\bullet,2}}_{\e_1-\e_3}\cdots f_{\e_1-\e_m}^{s_{\bullet,m-1}}\prod_{j\in I}f_{\e_1-\delta_j}\right).
    \end{equation*}
    Now distributing $\partial_{1,a}^{k_{q}}$ over $y$ yields us again
    \begin{equation*}
        \resizebox{\textwidth}{!}{$\displaystyle
        \sum_{\substack{J\subseteq I\\|J|\leq k_{q}}}c_J\partial_{1,2}^{s_{2,\bullet}}\partial_{1,3}^{s_{3},\bullet}\cdots\partial^{s_{t_{1}-1,\bullet}}_{1,t_{1}-1}\partial^{s_{t_{1},\bullet}+k_{t_{1}}}_{1,t_{1}}\partial^{k_{t_{2}}}_{1,t_{2}}\cdots\partial^{k_{t_{q}-1}}_{1,t_{q}-1}\left(\partial^{k_{t_{q}}-|J|}_{1,a}\left(f_{\e_1-\e_2}^{s_{\bullet,1}}f^{s_{\bullet,2}}_{\e_1-\e_3}\cdots f_{\e_1-\e_m}^{s_{\bullet,m-1}}\right)\prod_{j\not\in I\setminus J}f_{\e_1-\d_j}\right)\prod_{j\in J}f_{\e_a-\delta_j},
        $}
    \end{equation*}
    for some non-zero constants $c_J$.
    
    Now, we can argue as in the base case and see that the leading term of $A$ is the leading term of
    \begin{equation*}
        \resizebox{\textwidth}{!}{$\displaystyle
        \partial_{1,2}^{s_{2,\bullet}}\partial_{1,3}^{s_{3},\bullet}\cdots\partial^{s_{t_{1}-1,\bullet}}_{1,t_{1}-1}\partial^{s_{t_{1},\bullet}+k_{t_{1}}}_{1,t_{1}}\partial^{k_{t_{2}}}_{1,t_{2}}\cdots\partial^{k_{t_{q}-1}}_{1,t_{q}-1}\left(f_{\e_1-\e_2}^{s_{\bullet,1}}f^{s_{\bullet,2}}_{\e_1-\e_3}\cdots f_{\e_1-\e_m}^{s_{\bullet,m-1}}\prod_{j\not\in I\setminus \mathrm{OR}_a(\mathbf{s})}f_{\e_1-\d_j}\right)\prod_{j\in \mathrm{OR}_a(\mathbf{s})}f_{\e_a-\delta_j},
        $}
    \end{equation*}
    The reason why we can get the set $\mathrm{OR}_{a}(\mathbf{s})$ is that $\mathrm{OR}_{a}(\mathbf{s})$ consists of the $k_q$ many largest elements of $I$.

    Applying our induction hypothesis proves the claim.
    \end{proof}
    \end{proof}
\section{Bijection for small exterior powers}\label{sec-indep-column-strip}

Before proving that our set of monomials yields us a basis for $\gr V(1^k|0^n)$, we introduce the following objects:

We recall from Remark \ref{rem-barred-indices-and-defn-set-I} the set $K = [m]\cup \overline{[n]}$ on which we introduce the total order 
\[1<\cdots < m< \overline 1 < \cdots < \overline n.\]
\begin{defn}
Fix $1\le k\le m$. We define $B_k$ to be the set of functions $f:[k]\to K$ satisfying the following conditions:
\begin{itemize}
    \item If $f(i)\in [k]$, then $f(i) = i$, and
    \item For $i< j$ such that $\{f(i), f(j)\}\subset K\setminus [k]$, either $f(i)>f(j)$ or $f(i)=f(j)\in \overline{[n]}$.
\end{itemize}
\end{defn}
\begin{lem}
    $B_k$ indexes a weight basis of $\bigwedge^k\C^{m|n}$ under the mapping 
\[f\mapsto e_{f(1)}\wedge \cdots \wedge e_{f(k)}.\]
\end{lem}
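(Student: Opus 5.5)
The plan is to compare $B_k$ with the standard basis of the super exterior power and write down an explicit bijection between the two index sets. First I will recall the structure of $\bigwedge^k\C^{m|n}$: it is the degree-$k$ part of the free supercommutative algebra on $\C^{m|n}$. In it, even vectors anticommute (so $e_i\wedge e_i=0$ for $i\in[m]$), while odd vectors commute (so $e_{\overline{j}}\wedge e_{\overline{j}}\neq0$). By the PBW-type basis of a free supercommutative algebra (an exterior algebra on $(\C^{m|n})_{\overline0}$ tensored with a symmetric algebra on $(\C^{m|n})_{\overline1}$), a basis is
\begin{equation*}
e_{i_1}\wedge\cdots\wedge e_{i_a}\wedge e_{\overline{j_1}}\wedge\cdots\wedge e_{\overline{j_b}},\qquad i_1<\cdots<i_a\text{ in }[m],\ j_1\le\cdots\le j_b\text{ in }[n],\ a+b=k.
\end{equation*}
It is therefore indexed by pairs $(S,M)$ with $S\subseteq[m]$ a set and $M$ a multiset on $\overline{[n]}$, subject to $|S|+|M|=k$. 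Each such vector is a weight vector, of weight $\sum_{i\in S}\e_i+\sum_{\overline{j}\in M}\d_j$.

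Next I will show that for $f\in B_k$ the vector $e_{f(1)}\wedge\cdots\wedge e_{f(k)}$ equals $\pm$ one of these basis vectors, namely the one indexed by
\begin{equation*}
S_f:=f([k])\cap[m],\qquad M_f:=\text{the multiset }f([k])\cap\overline{[n]}.
\end{equation*}
This needs $S_f$ to be a genuine set, i.e.\ no even index is repeated. A repeated even value would either be a fixed point $i\in[k]$ appearing twice, which is impossible since $f(i)\in[k]$ forces $f(i)=i$, or a value in $[m]\setminus[k]$ appearing twice, which is excluded by the second condition. Reordering the wedge factors only introduces a sign, so the image of $f$ is a nonzero weight vector, equal up to sign to the basis vector indexed by $(S_f,M_f)$.

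It then remains to show that $f\mapsto(S_f,M_f)$ is a bijection onto the set of pairs $(S,M)$ with $|S|+|M|=k$. I will construct the inverse explicitly. Given $(S,M)$:
\begin{enumerate}
\item Set $f(i)=i$ for $i\in S\cap[k]$. This is forced, because by the first condition a value in $[k]$ can only occur at its own position, and by the second condition no value outside $[k]$ can sit there instead.
\item The remaining positions $[k]\setminus S$ number $k-|S\cap[k]|=|S\setminus[k]|+|M|$. On them the values must come from $K\setminus[k]$ and be weakly decreasing in the order $1<\cdots<m<\overline1<\cdots<\overline n$, strictly so on even values.
\item Since every odd index exceeds every even index in this order, there is exactly one such filling: list the elements of $M$ in weakly decreasing order, followed by the elements of $S\setminus[k]$ in strictly decreasing order, and place them in the free positions in increasing order.
\end{enumerate}
This gives existence and uniqueness of $f\in B_k$ with $(S_f,M_f)=(S,M)$, and hence the bijection.

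The only step needing care is the bookkeeping in the second condition of the definition of $B_k$: that "either $f(i)>f(j)$ or $f(i)=f(j)\in\overline{[n]}$" is exactly the condition that the non-fixed values are weakly decreasing with equalities allowed only among odd indices. Once that is read off, the rest is the standard description of the basis of $\bigwedge^k\C^{m|n}$.
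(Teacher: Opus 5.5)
Your proof is correct and takes the same route as the paper. Both arguments identify $B_k$ with the standard monomial basis of $\bigwedge^k\C^{m|n}$ (exterior in the even vectors, symmetric in the odd ones). Each such monomial has a unique reordering, up to sign, in which values in $[k]$ sit at their own positions and the remaining values are weakly decreasing, with repetitions only among odd indices. You simply make explicit what the paper leaves implicit: the description of that standard basis via pairs $(S,M)$, and the explicit construction of the inverse map.
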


\begin{proof}
    To see the inverse bijection, any weight vector is of the form $e_{\underline a}:= e_{a_1}\wedge \cdots \wedge e_{a_k}$, where $\underline a = (a_1, \dots, a_k)\in K^k$ is a tuple such that $a_i = a_j$ only if they both belong to $\overline{[n]}$. Then there is a unique way to reorder the tuple $\underline a$ such that 
\begin{itemize}
    \item If $a_i\in [k]$, then $a_i = i$, and
    \item the $a_i$ such that $a_i\in K\setminus[k]$ are arranged in (weakly) decreasing order.
\end{itemize}
Then $f(i) = a_i$ gives the desired $f\in B_k$.
\end{proof}

Now we analyze the lattice points of the marked chain polytope $S(\lambda|0^n)$ with $\lambda = \omega_k = (1^k)$, i.e., $\lambda_1 = \cdots = \lambda_k = 1, \lambda_{k+1}=\cdots = \lambda_m = 0$, and show that these are also in bijection with $B_k$.

\begin{lem}\label{lem-bijection-small-fundamentals}
    The lattice points $S(\omega_k|0^n)$ of $P(\omega_k|0^n)$ are in bijection with $B_k$
\end{lem}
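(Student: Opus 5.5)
The plan is to show that every lattice point of $P(\omega_k|0^n)$ is a $0/1$-vector supported on a restricted set of roots, identify $S(\omega_k|0^n)$ with a family of antichains, and then match these antichains with the functions in $B_k$.

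\emph{Step 1: reduce to $0/1$-vectors on a restricted support.} Specialize the defining inequalities of $P(\lambda|0^n)$ to $\lambda=\omega_k$.
\begin{itemize}
\item Take an even root $\e_a-\e_b$. It lies on the Dyck path from $\e_{b-1}-\e_b$ to $\e_a-\e_{a+1}$, so \eqref{eq-inequality-fflv-strip-even-dyck-path} gives $s_{a,b}\le\lambda_a-\lambda_b$. This forces $s_{a,b}=0$ unless $a\le k<b$, in which case $s_{a,b}\le 1$.
\item Take an odd root $\e_a-\d_b$. It starts the extended Dyck path
\[
(\e_a-\d_b,\e_a-\d_{b-1},\dots,\e_a-\d_1,\e_a-\e_m,\e_a-\e_{m-1},\dots,\e_a-\e_{a+1}).
\]
By \eqref{eq-inequality-fflv-strip-extended-dyck-path} this forces $s_{\e_a-\d_b}=0$ for $a>k$.
\end{itemize}
Hence every lattice point is a $0/1$-vector supported on $R_k:=\{\e_a-\e_b\mid a\le k<b\}\cup\{\e_a-\d_b\mid a\le k\}$. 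By the Remark following Proposition~\ref{prop-strip-fflv-is-marked-chain}, $P(\omega_k|0^n)$ is a genuine chain polytope with all marking differences equal to $0$ or $1$. Its lattice points are therefore exactly the subsets $X\subseteq R_k$ meeting every (extended) Dyck path in at most one element, i.e.\ antichains in the induced order on $R_k$ coming from the poset $\mathcal{P}$ of Proposition~\ref{prop-strip-fflv-is-marked-chain}.

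\emph{Step 2: describe comparability in $R_k$ explicitly.} This is the main obstacle: the extended Dyck path conditions (iii), (iv) have to be unwound carefully. The claims are as follows, for $i<j\le k$ (and, where relevant, also $i=j$).
\begin{itemize}
\item $\e_i-\e_a$ and $\e_j-\e_b$ with $a,b>k$ are comparable iff $a\le b$. This is the classical FFLV statement for fundamental weights.
\item $\e_i-\e_a$ and $\e_j-\d_b$ are always comparable. The path $\e_j-\d_b,\dots,\e_j-\d_1,\e_j-\e_m$, continued by a Dyck path moving up-left, reaches every $\e_i-\e_a$ with $i\le j$.
\item $\e_i-\e_a$ and $\e_i-\d_b$ (same row) are comparable, by the same path.
\item $\e_i-\d_a$ and $\e_j-\d_b$ with $i\le j$ are comparable iff $a\ne b$. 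By (iv), a path starting in odd roots decreases the $\d$-index by exactly one at each step while the $\e$-row index weakly decreases. So from $\e_j-\d_b$ one reaches $\e_i-\d_a$ precisely when $a<b$ and $i\le j$, and same-index odd roots are incomparable.
\end{itemize}
I will verify these claims with the covering relations $x_{i,\overline{j}}\ge x_{k,\overline{j+1}}$ for $k\le i$ and $x_{i,1}\ge x_{i,\overline1}$ of $\mathcal{P}$, checking that no other chains connect the relevant elements.

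\emph{Step 3: build the bijection with $B_k$.} Given an antichain $X$, define $f_X:[k]\to K$ by
\[
f_X(i)=\begin{cases} b & \text{if }\e_i-\e_b\in X,\\ \overline{b} & \text{if }\e_i-\d_b\in X,\\ i & \text{otherwise.}\end{cases}
\]
\begin{itemize}
\item This is well defined: by the same-row comparabilities of Step 2, each row $i$ contains at most one element of $X$.
\item The antichain conditions from Step 2 translate exactly into the defining condition of $B_k$, with respect to the order $1<\dots<m<\overline1<\dots<\overline n$. For $i<j$ with $f(i),f(j)\notin[k]$, we need $f(i)>f(j)$, except that $f(i)=f(j)\in\overline{[n]}$ is allowed. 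In particular, the case ``$f(i)$ even, $f(j)$ odd'' is excluded, consistent with every odd value exceeding every even value.
\item The inverse map sends $f\in B_k$ to $\{\e_i-\e_{f(i)}\mid f(i)\in[m]\setminus[k]\}\cup\{\e_i-\d_b\mid f(i)=\overline b\}$. The two maps are evidently mutually inverse.
\end{itemize}
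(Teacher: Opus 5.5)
Your strategy is the paper's: restrict to $0/1$-vectors supported on $R_k$, determine which pairs of coordinates cannot both equal $1$, and read off $f\in B_k$. The paper states the exclusions in one sentence, and your Step~2 spells them out. Two points, however, fail as written.

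The first is the headline of your fourth bullet. For $i<j$, the odd roots $\e_i-\d_a$ and $\e_j-\d_b$ are comparable if and only if $a<b$, not if and only if $a\neq b$. Along the odd part of an extended Dyck path the $\d$-index drops by one at each step while the row index never increases. So of two odd roots on a common path, the one with the larger $\d$-index lies in the weakly larger row, and for $a>b$ this would require $i\ge j$. With the criterion you state, antichains would force $f(i)=f(j)$ among odd values, whereas $B_k$ also allows $f(i)=\overline a>\overline b=f(j)$. Your next sentence gives the correct criterion, and that is the one Step~3 needs. By the same monotonicity you should also record that $\e_i-\d_b$ and $\e_j-\e_a$ with $i<j$ are incomparable, since odd roots precede even ones on a path. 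This is exactly the case ``$f(i)$ odd, $f(j)$ even'' that $B_k$ permits. If you run these checks in $\mathcal P$ instead, note that its printed relation $x_{i,\overline j}\ge x_{k,\overline{j+1}}$ ($k\le i$) agrees with condition (iv) only if $x_{i,\overline j}$ is read as $\e_{m+1-i}-\d_j$. Arguing from (iii) and (iv) directly, as you do, is safer.

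The second problem is the case $a=m$ in Step~1. There is no even tail $\e_m-\e_m,\dots$, so your path stops at $\e_m-\d_1$. But \eqref{eq-inequality-fflv-strip-extended-dyck-path} as printed applies only when $p(u)=\e_r-\e_{r+1}$ is an even root. What you need is the bound $\sum_{c\le b}s_{\e_m-\d_c}\le\lambda_m$, coming from the row-$m$ chain that ends at $\e_m-\d_1$. This is the arrow from $\e_4-\d_1$ to $\lambda_4$ in Figure~\ref{fig-gl42-strip-pattern}; equivalently, read $\e_{m+1}$ as $\d_1$ in \eqref{eq-inequality-fflv-strip-extended-dyck-path}. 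When $k=m$, the same chain is also what puts $\e_m-\d_1$ on a common path with the other odd roots of row $m$. This is not cosmetic. With the inequalities taken literally, for $m=2$, $n=1$, $k=1$ the coordinate $s_{\e_2-\d_1}$ is free in $\{0,1\}$, giving $6$ lattice points while $|B_1|=3$. The paper's proof uses this convention silently through ``the chain condition''. Once you state it and correct the odd--odd criterion, your argument is complete, and somewhat more explicit than the paper's.
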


\begin{proof}
    The chain condition immediately implies that for $i<j\in K$, $0\le s_{\e_i - \e_j} \le 1$ (recall that $\e_{\overline t}:= \d_t$ for $t\in [n]$). Moreover if $j\le k$ or $i>k$, we have $s_{\e_i-\e_j} = 0$. Thus the only interesting coordinates are $s_{\e_i-\e_j}$ with $1\le i\le k < j\le \overline n$. For each $i\le k$, the (bottom left to top right) diagonal chain shows that there exists at most one $j$ with $s_{\e_i-\e_j} = 1$. 
    
    If $1\le i_1<i_2\le k < j_1 \le j_2\le \overline n$, there is a chain showing that $s_{\e_{i_1}-\e_{j_1}} + s_{\e_{i_2}-\e_{i_2}}\le 1$. In other words, at most one of $s_{\e_{i_1}-\e_{j_1}}$ and $s_{\e_{i_2}-\e_{j_2}}$ can be nonzero. Moreover, if $j_1=j_2\notin \overline{[n]}$, a similar statement holds. 
    
    Hence every point in $S(\lambda)$ can be described by first choosing:
    \begin{itemize}
        \item $1\le i_1<\cdots < i_t\le k$, and 
        \item $\overline n\ge  j_1\ge \cdots \ge j_t> k$, with $j_p = j_q\implies j_p\in \overline {[n]}$,
    \end{itemize}
    and then declaring $s_{\e_{i_1}-\e_{j_{1}}} = \cdots = s_{\e_{i_1}-\e_{j_{1}}} = 1$, and all other coordinates are set to $0$. This uniquely defines a function $f\in B_k$ by the formula:
    \begin{equation*}
        f(i) = \begin{cases}
            j_p & \text{if }i=i_p\\
            i & \text{if }i\notin \{i_1, \dots, i_t\}
        \end{cases}
    \end{equation*}
    It is easy to check that this is a bijection.
\end{proof}

\section{Linear independence for arbitrary weights $(\lambda|0^n)$ in the strip}\label{sec-indep-strip}
We now want to show that our set of monomials is in fact linearly independent in $\gr V(\lambda|0^n)$ for any partition $\lambda$ of length $l(\lambda)\leq m$.

First, we restate the Minkowski sum property for the chain polytopes $\hat{P}(\lambda)$.
\begin{prop}\label{prop-minkowski-decomp-chain}
    Let $\lambda,\mu$ be partitions of lengths $l(\lambda),l(\mu)\leq m$. Let $\hat{S}(\lambda)=\hat{P}(\lambda)\cap\Z^{\tilde{\Phi}^{+}}$, $\hat{S}(\mu)=\hat{P}(\mu)\cap\Z^{\tilde{\Phi}^{+}}$. 
    Then we have that
    \begin{equation*}
        \hat{S}(\lambda+\mu|0^n)=\hat{S}(\lambda|0^n)+\hat{S}(\mu|0^n)
    \end{equation*}
\end{prop}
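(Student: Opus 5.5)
This is the lattice-point Minkowski property of marked chain polytopes, applied to the marked poset $(\mathcal{P},A,\lambda_{\mathcal{P}})$ from Proposition \ref{prop-strip-fflv-is-marked-chain}. The plan is to reduce to the known result (Fourier's Minkowski property for marked chain polytopes, equivalently the transfer from marked order polytopes à la Ardila--Bliem--Salazar). Note first that the marking depends on $\lambda$ only through the labels $\lambda_j$ on $x_{1,j}$; the labels on $\hat{0},\hat{1}$ are fixed as $0$ and $1$. So the marking of $\lambda+\mu$ is not literally the sum of the markings of $\lambda$ and $\mu$, since $\hat{1}$ would be marked $2$. I therefore first check how $\hat{1}$ enters. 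Among the covering relations listed, no element is related to $\hat{1}$, so $\hat{1}$ is an isolated marked element and imposes no chain inequalities. The bound $s_\alpha\le 1$ is imposed separately when passing to $P(\lambda|0^n)$. Hence $\hat{P}(\lambda)$ is the marked chain polytope of a poset whose marking $\lambda_j$ on $x_{1,j}$ and $0$ on $\hat{0}$ is additive in $\lambda$. The inclusion $\hat{S}(\lambda)+\hat{S}(\mu)\subseteq\hat{S}(\lambda+\mu)$ is then immediate, because every defining inequality $\sum x_{p_i}\le\lambda_{\mathcal P}(b)-\lambda_{\mathcal P}(a)$ is linear in the marking.

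For the reverse inclusion I would use the piecewise-linear transfer map $\phi$ from the marked chain polytope to the marked order polytope $\mathcal{O}(\mathcal{P},A)_{\lambda_{\mathcal P}}$. It is defined by $\phi(x)_p=\max\{\lambda_{\mathcal P}(a)+x_{p_1}+\dots+x_{p_k}\}$, taken over saturated chains $a<p_1<\dots<p_k=p$ with $a\in A$. This map is a lattice-preserving bijection. On the order side the decomposition is easy. Given an integral order point $y$ for the marking $\lambda+\mu$, set $y'_p:=\min(y_p,\max\text{-constraint from }\lambda)$. A cleaner way is the standard ``layer'' argument: the integral points of a marked order polytope with integral marking decompose as sums of $0/1$-indicator points of filters, with multiplicities given by the marking. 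One then peels off an integral point for the $\lambda$ marking, namely $y'_p=\max_{a\le p}\min(y_p-\ldots)$, by mimicking Fourier's proof. The difficulty is that $\phi$ is not linear, so the order-side decomposition does not transfer automatically. To handle this I would follow Fourier's argument (\emph{Marked poset polytopes: Minkowski sums, indecomposables, and unimodular equivalence}). When the marked poset satisfies his regularity/connectedness condition, every chain-polytope lattice point $x$ for $\lambda+\mu$ can be written as $x'+x''$ by a greedy construction. One processes the elements of $\mathcal{P}\setminus A$ along a linear extension and assigns to $x'$ as much of $x_p$ as the remaining budget of the $\lambda$-chains through $p$ allows, giving the rest to $x''$. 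The budget is computed from the maximum chain sums ending at $p$, which is exactly the order-polytope coordinate of $\phi$.

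So the key steps, in order, are: (1) observe that $\hat{1}$ is inert and the marking is additive; (2) prove the easy inclusion; (3) verify that our poset satisfies Fourier's hypothesis, which says that each element of $\mathcal{P}\setminus A$ lies between marked elements in a way making the marking ``regular''; (4) invoke his Minkowski theorem, or run the greedy decomposition. The main obstacle is step (3) together with the odd part of the poset. The elements $x_{i,\overline{j}}$ sit between $\hat{0}$ (marked $0$) and the $x_{1,j}$ via $x_{i,1}$, and the chains through the odd columns have a nonplanar ``staircase'' shape from the relations $x_{i,\overline j}\ge x_{k,\overline{j+1}}$ for $k\le i$. I would try first to check Fourier's condition directly, namely that for each covering relation involving a marked element the marking difference is realized. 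If that fails, I would prove the greedy decomposition by hand. Here the key claim is that, processing in a linear extension, the residual $\lambda$-budget $\lambda_{\mathcal P}(b)-\max(\text{chain sums to }p)$ stays nonnegative and integral along every chain. This is verified by the max-plus identity $\phi_{\lambda+\mu}(x)=\phi_\lambda(x')+\phi_\mu(x'')$ on each chain.
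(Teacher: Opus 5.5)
Your steps (1) and (2) are fine. The observation that $\hat 1$ is unrelated to every other element, so the marking is effectively additive, is correct. The paper relies on this silently: its $\omega_{\mathcal P}$ marks $\hat 1$ by $1$, while $\lambda_{\mathcal P}-\omega_{\mathcal P}$ marks it by $0$.

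The gap is the reverse inclusion, which is the whole content of the statement. You hand it to an unspecified ``Minkowski theorem'' under a ``regularity/connectedness'' hypothesis that you never state precisely or verify, and you yourself call this the main obstacle. The fallback greedy construction is justified only by the identity $\phi_{\lambda+\mu}(x)=\phi_\lambda(x')+\phi_\mu(x'')$, which restates what must be proved. More seriously, no condition on the poset alone can close the argument, because the lattice-point Minkowski property fails for general pairs of markings. Take $A=\{a_1,a_2,b\}$ and one unmarked $p$ with $a_1,a_2<p<b$. Use the markings $(\lambda(a_1),\lambda(a_2),\lambda(b))=(0,1,1)$ and $(\mu(a_1),\mu(a_2),\mu(b))=(1,0,1)$. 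Then $S_{\mathcal C}(\lambda)=S_{\mathcal C}(\mu)=\{0\}$, but $S_{\mathcal C}(\lambda+\mu)=\{0,1\}$. So the proof has to use that the markings come from partitions, and your plan uses nothing beyond linearity.

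The missing idea is exactly the paper's proof. By \cite[Lemma 2]{Fourier-Chain-Polytopes}, for a nonnegative integral marking, $S_{\mathcal C}(\lambda_{\mathcal P})=S_{\mathcal C}(\lambda_{\mathcal P}-\omega_{\mathcal P})+S_{\mathcal C}(\omega_{\mathcal P})$, where $\omega_{\mathcal P}$ is the $0/1$ indicator of the support of $\lambda_{\mathcal P}$. For a partition, $\omega_{\mathcal P}$ is the marking of $1^{l(\lambda)}$ and $\lambda_{\mathcal P}-\omega_{\mathcal P}$ is the marking of the partition $\lambda-1^{l(\lambda)}$. Induction then gives $\hat S(\lambda)=\sum_i(\lambda_i-\lambda_{i+1})\hat S(1^i)$. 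These coefficients are additive in $\lambda$, so the proposition follows from commutativity of Minkowski sums.

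This lemma is precisely the ``layer'' decomposition you set aside because $\phi$ is nonlinear. The inverse transfer map $x_p=y_p-\max_{q\lessdot p}y_q$ (with $y_a=\lambda_{\mathcal P}(a)$ on $A$) commutes with the monotone truncations $t\mapsto\min(t,1)$ and $t\mapsto\max(t-1,0)$, and these two sum to the identity. Hence splitting $y=\phi(x)$ as $\min(y,1)+\max(y-1,0)$ does transfer to a splitting of $x$ into a point of $S_{\mathcal C}(\omega_{\mathcal P})$ and a point of $S_{\mathcal C}(\lambda_{\mathcal P}-\omega_{\mathcal P})$. With that observation and the partition structure, your outline becomes a complete proof.
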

\begin{proof}
    We recall from \cite[Lemma 2]{Fourier-Chain-Polytopes} that we have the decomposition
    \begin{equation*}
        S(\lambda|0^n)=S_{\mathcal{C}}(\lambda_\mathcal{P})=S_\mathcal{C}(\lambda_\mathcal{P}-\omega_\mathcal{P})+S_\mathcal{C}(\omega_\mathcal{P}),
    \end{equation*}
    where $\omega_\mathcal{P}$ is the marking
    \begin{equation*}
        \omega_\mathcal{P}(a):=\begin{cases}
            1\text{ if }\lambda_\mathcal{P}(a)\neq0\\
            0\text{ otherwise}
        \end{cases}.
    \end{equation*}
    Note that $S_{\mathcal{C}}(\omega_\mathcal{P})=\hat{S}(1^{l(\lambda)})$ and $S_{\mathcal{C}}(\lambda-\omega_\mathcal{P})=\hat{S}(\lambda-1^{l(\lambda)})$.
    Inducting down along $l(\lambda)$, we see that
    \begin{equation*}
        \hat{S}(\lambda)=\sum_{i=1}^{l(\lambda)}(\lambda_i-\lambda_{i+
        1})\hat{S}(1^i),
    \end{equation*}
    where $\lambda_{l(\lambda)+1}=0$. In particular, we have
    \begin{align*}
        \hat{S}(\lambda)+\hat{S}(\mu)&=\left(\sum_{i=1}^{l(\lambda)}(\lambda_i-\lambda_{i+
        1})\hat{S}(1^i)\right)+\left(\sum_{i=1}^{l(\mu)}(\mu_i-\mu_{i+
        1})\hat{S}(1^i)\right)\\
        &=\sum_{i=1}^{l(\lambda+\mu)}((\lambda+\mu)_i-(\lambda+\mu)_{i+
        1})\hat{S}(1^i)=\hat{S}(\lambda+\mu)
    \end{align*}
\end{proof}
\begin{prop}\label{prop-minkowski-decomp}
    Let $\lambda,\mu$ be partitions of lengths $l(\lambda),l(\mu)\leq m$. Then we have that
    \begin{equation*}
        S(\lambda+\mu|0^n)=(S(\lambda|0^n)+S(\mu|0^n))\cap\left\{\mathbf{m}\in\Z^{\tilde{\Phi}^+}\,\big|\, m_\alpha\in\{0,1\}\text{ for all }\alpha\in\Phi^+_{\overline1
        }\right\}
    \end{equation*}
\end{prop}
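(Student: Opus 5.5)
We will derive the statement from the integral Minkowski property of the untruncated marked chain polytopes, Proposition~\ref{prop-minkowski-decomp-chain}, together with Proposition~\ref{prop-strip-fflv-is-marked-chain}. That proposition gives, on lattice points,
\begin{equation*}
    S(\nu|0^n)=\hat{S}(\nu)\cap Q,\qquad Q:=\left\{\mathbf{m}\in\Z^{\tilde{\Phi}^+}\,\big|\, m_\alpha\in\{0,1\}\text{ for all }\alpha\in\Phi^+_{\overline1}\right\},
\end{equation*}
for every partition $\nu$ with $l(\nu)\leq m$. Note that the lattice points of $\hat{P}(\nu)$ already have nonnegative coordinates, so for them the condition $s_\alpha\in[0,1]$ is the same as $s_\alpha\in\{0,1\}$.

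For the inclusion ``$\supseteq$'', take $\mathbf{s}\in S(\lambda|0^n)$ and $\mathbf{t}\in S(\mu|0^n)$ with $\mathbf{s}+\mathbf{t}\in Q$. Then $\mathbf{s}\in\hat{S}(\lambda)$ and $\mathbf{t}\in\hat{S}(\mu)$. Since the defining chain inequalities of a marked chain polytope are linear in the marking, and the markings of $\hat{P}(\lambda)$ and $\hat{P}(\mu)$ add up to the marking of $\hat{P}(\lambda+\mu)$, the sum $\mathbf{s}+\mathbf{t}$ lies in $\hat{P}(\lambda+\mu)$. One point needs care here. The element $\hat{1}$ carries the marking $1$ for every $\nu$, so it does not add under $\lambda+\mu$. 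However, every chain ending at $\hat 1$ reads $\hat 0<x_{i,\bar j}<\hat 1$, so it only encodes the bound $s_\alpha\leq 1$ on odd coordinates, and that bound is enforced by $Q$. Hence $\mathbf{s}+\mathbf{t}\in\hat{S}(\lambda+\mu)\cap Q=S(\lambda+\mu|0^n)$.

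For the inclusion ``$\subseteq$'', take $\mathbf{r}\in S(\lambda+\mu|0^n)\subseteq\hat{S}(\lambda+\mu)$. By Proposition~\ref{prop-minkowski-decomp-chain}, we can write $\mathbf{r}=\mathbf{s}+\mathbf{t}$ with $\mathbf{s}\in\hat{S}(\lambda)$ and $\mathbf{t}\in\hat{S}(\mu)$. All coordinates are nonnegative integers and $r_\alpha\in\{0,1\}$ for odd $\alpha$, so $s_\alpha,t_\alpha\in\{0,1\}$ for odd $\alpha$. Thus $\mathbf{s}\in S(\lambda|0^n)$, $\mathbf{t}\in S(\mu|0^n)$, and $\mathbf{r}=\mathbf{s}+\mathbf{t}$ lies in $Q$, as required.

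The main obstacle is ensuring that Proposition~\ref{prop-minkowski-decomp-chain} is valid with $\hat{1}$ fixed at marking $1$. The cited lemma from \cite{Fourier-Chain-Polytopes} subtracts $\omega_{\mathcal P}$, which would also lower the marking of $\hat 1$. To address this, I would first handle the odd-only chains separately: chains through $\hat 1$ only bound individual odd coordinates by $1$. So I would either replace $\hat{1}$ by a marking that scales with $\nu$ (for instance $\lambda_1$, which makes these chains redundant), or observe directly that the decomposition produced by the lemma never places a value above $1$ on an odd coordinate. Either way, the odd coordinates of each summand are bounded by those of $\mathbf{r}$, which are at most $1$. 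With this justification in place, the two inclusions above complete the proof.
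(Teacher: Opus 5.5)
Your argument is correct and is the same as the paper's: you combine $S(\nu|0^n)=\hat S(\nu)\cap Q$ from Proposition \ref{prop-strip-fflv-is-marked-chain} with the lattice-point Minkowski property of Proposition \ref{prop-minkowski-decomp-chain}, and use nonnegativity to pass the $\{0,1\}$ bound on odd coordinates to the summands; the paper compresses exactly this into a three-line chain of equalities. Your concern about $\hat 1$ is unnecessary, because no covering relation in the paper's poset involves $\hat 1$, so it lies on no chain, its marking never affects $\hat P(\nu)$, and the bound $s_\alpha\leq 1$ enters only through $Q$ (and had $\hat 1$ been placed above the odd elements, your description of those chains as $\hat 0<x_{i,\bar j}<\hat 1$ would fail anyway, since $x_{i,\bar j}>x_{k,\overline{j+1}}$ for $k\leq i$).
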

\begin{proof}
    For simplicity's sake, we define
    \begin{equation*}
        M:=\left\{\mathbf{m}\in\Z^{\tilde{\Phi}^+}\,\big|\, m_\alpha\in\{0,1\}\text{ for all }\alpha\in\Phi^+_{\overline1
        }\right\}.
    \end{equation*}
    We have that
    \begin{align*}
        S(\lambda+\mu|0^n)&=\hat{S}(\lambda+\mu)\cap M\\
        &=(\hat{S}(\lambda)+\hat{S}(\mu))\cap M\\
        &=(S(\lambda)+S(\mu))\cap M,
    \end{align*}
    where the second equality is Proposition \ref{prop-minkowski-decomp-chain}.
\end{proof}
Hence, we can now conclude
\begin{prop}
    Let $\lambda$ be a partition of length $l(\lambda)\leq m$. Then the set of vectors
    \begin{equation*}
        \left\{f^{\mathbf{s}}\cdot v_{(\lambda|0^n)}\mid\mathbf{s}\in S(\lambda|0^n)\right\}
    \end{equation*}
    is linearly independent in $\mathrm{gr}\,V(\lambda|0^n)$.
\end{prop}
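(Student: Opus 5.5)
We follow the standard FFLV strategy: induction on $\lambda$ via the Cartan embedding of Lemma \ref{lem-cartan-embedding}, with the fundamental exterior powers $\bigwedge^k\C^{m|n}=V(1^k|0^n)$ as the base case.

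\textbf{Base case.} For $\lambda=\omega_k=(1^k)$, Section \ref{sec-spanning-strip} shows that $\{f^{\mathbf s}v_{(\omega_k|0^n)}\mid \mathbf s\in S(\omega_k|0^n)\}$ spans $\gr V(\omega_k|0^n)$. Lemma \ref{lem-bijection-small-fundamentals}, together with the lemma identifying $B_k$ with a weight basis, gives
\[
|S(\omega_k|0^n)|=|B_k|=\dim\textstyle\bigwedge^k\C^{m|n}=\dim\gr V(\omega_k|0^n).
\]
A spanning set of cardinality equal to the dimension is a basis, so the monomials are independent here. To be safe we should check that distinct $\mathbf s\in S(\omega_k|0^n)$ give distinct vectors in the spanning set; this follows from the dimension count.

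\textbf{Induction step.} Write $\lambda=\mu+\omega_k$ with $k=l(\lambda)$, and assume the statement holds for $\mu$. Use the Cartan embedding $V(\lambda|0^n)\hookrightarrow V(\mu|0^n)\otimes V(\omega_k|0^n)$, $v_\lambda\mapsto v_\mu\otimes v_{\omega_k}$. It is compatible with PBW filtrations and induces a map
\[
\gr V(\lambda|0^n)\to \gr V(\mu|0^n)\otimes\gr V(\omega_k|0^n),
\]
under which $f^{\mathbf s}v_\lambda$ maps to $\sum_{\mathbf s=\mathbf s_1+\mathbf s_2}c\, f^{\mathbf s_1}v_\mu\otimes f^{\mathbf s_2}v_{\omega_k}$. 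The coefficients $c$ are products of binomial coefficients together with signs, since odd variables square to zero and anticommute. Following \cite{FFL-Favourable} and \cite{Fourier-Chain-Polytopes}, we work with the monomial order $<$ of Definition \ref{def-monomial-order} and a filtration refined by it. For each $\mathbf s\in S(\lambda|0^n)$ we choose a canonical decomposition $\mathbf s=\mathbf s_1+\mathbf s_2$ with $\mathbf s_1\in S(\mu|0^n)$ and $\mathbf s_2\in S(\omega_k|0^n)$. Proposition \ref{prop-minkowski-decomp} guarantees that one exists, and the odd coordinates of $\mathbf s_1$ and $\mathbf s_2$ have disjoint supports because $s_\alpha\le 1$. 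We take this decomposition to be maximal for the induced order, for instance with $\mathbf s_1$ lexicographically largest. The key claim is that the leading term of the image of $f^{\mathbf s}v_\lambda$, with respect to the tensor basis $\{f^{\mathbf t_1}v_\mu\otimes f^{\mathbf t_2}v_{\omega_k}\}$ ordered lexicographically, is $f^{\mathbf s_1}v_\mu\otimes f^{\mathbf s_2}v_{\omega_k}$ with nonzero coefficient. The coefficient is $\pm\prod_{\alpha\text{ even}}\binom{s_\alpha}{(s_1)_\alpha}\neq0$.

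\textbf{Conclusion.} Distinct $\mathbf s$ give distinct canonical pairs $(\mathbf s_1,\mathbf s_2)$, since their sum recovers $\mathbf s$. By the inductive hypothesis and the base case, these pairs index linearly independent vectors. A triangularity argument then shows that the images of the $f^{\mathbf s}v_\lambda$ are linearly independent, hence so are the $f^{\mathbf s}v_\lambda$ themselves.

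\textbf{Main obstacle.} The hard part is making the leading-term argument rigorous. The summands $f^{\mathbf t_1}v_\mu\otimes f^{\mathbf t_2}v_{\omega_k}$ with $\mathbf t_i\notin S$ must be rewritten using the straightening law (Proposition \ref{prop-straightening}, with \cite[Proposition 1]{FFLV_Type_A} for even paths). That law only produces smaller monomials, so those summands contribute strictly below the leading term. If the ordering argument proves too delicate, the fallback is a pure dimension count. Spanning gives $\dim\gr V(\lambda|0^n)\le |S(\lambda|0^n)|$, and the tensor-product argument is only needed for the reverse inequality $\dim V(\lambda|0^n)\ge|S(\lambda|0^n)|$. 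That inequality follows if the leading monomials $\mathbf s$ of the vectors $f^{\mathbf s}v_\lambda$ inside the PBW-graded tensor product are pairwise distinct, which is exactly what favourability supplies.
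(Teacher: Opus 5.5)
Your overall architecture matches the paper's: a base case for $\omega_k$ by spanning plus the count $|S(\omega_k|0^n)|=|B_k|$, then Proposition \ref{prop-minkowski-decomp} and the Cartan embedding. The paper feeds these into the essential-monomial argument of \cite[Proposition 3.12]{Flag-Degen}. However, the leading-term step, as you set it up, is false.

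Ordering the tensor basis lexicographically in the pair $(\mathbf t_1,\mathbf t_2)$ does not interact well with straightening. Straightening a factor lowers only that slot. So a summand whose first slot, after straightening, still exceeds $\mathbf s_1$ stays above $(\mathbf s_1,\mathbf s_2)$, however far the second slot drops. In particular, consider the summand $f^{\mathbf s}v_\mu\otimes v_{\omega_k}$. It is nonzero whenever $f^{\mathbf s}v_\mu\neq0$ in $\gr V(\mu|0^n)$, and it straightens to terms whose first slot has full degree. Since $<$ is homogeneous, it dominates.

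Concretely, take $m=4$, $\mu=\omega_2$, $\lambda=2\omega_2$, and $\mathbf s$ with $s_{1,3}=s_{2,4}=1$ and all other coordinates $0$. Then $\mathbf s\in S(2\omega_2|0^n)\setminus S(\omega_2|0^n)$, yet
$f_{\e_1-\e_3}f_{\e_2-\e_4}v_{\omega_2}=e_3\wedge e_4=-f_{\e_1-\e_4}f_{\e_2-\e_3}v_{\omega_2}\neq 0$.
So the pair-lexicographic leading term of the image of $f^{\mathbf s}v_\lambda$ is $f_{\e_1-\e_4}f_{\e_2-\e_3}v_{\omega_2}\otimes v_{\omega_2}$, not $f^{\mathbf s_1}v_{\omega_2}\otimes f^{\mathbf s_2}v_{\omega_2}$. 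This coincides with the leading term of the image of $f_{\e_1-\e_4}f_{\e_2-\e_3}v_\lambda$, whose exponent also lies in $S(2\omega_2|0^n)$. Your triangularity argument therefore collapses, and choosing $\mathbf s_1$ lexicographically largest does not help. Your fallback is circular: favourability is Proposition \ref{prop-strip-favourable}, which the paper deduces from the present statement.

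The repair is to grade by the sum rather than by the pair. Assign to $f^{\mathbf u_1}v_\mu\otimes f^{\mathbf u_2}v_{\omega_k}$, with $\mathbf u_1\in S(\mu|0^n)$ and $\mathbf u_2\in S(\omega_k|0^n)$, the level $\mathbf u_1+\mathbf u_2$.

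Since $<$ is a monomial order, $\mathbf u_1\le\mathbf t_1$ and $\mathbf u_2\le\mathbf t_2$ with at least one strict inequality give $\mathbf u_1+\mathbf u_2<\mathbf t_1+\mathbf t_2$. Hence every summand that needs straightening lands strictly below level $\mathbf s$. The summands with $\mathbf t_1\in S(\mu|0^n)$, $\mathbf t_2\in S(\omega_k|0^n)$ and $\mathbf t_1+\mathbf t_2=\mathbf s$ are distinct basis vectors at level $\mathbf s$. Any decomposition supplied by Proposition \ref{prop-minkowski-decomp} contributes one of them with the nonzero signed binomial coefficient you computed. So the top level of the image of $f^{\mathbf s}v_\lambda$ is exactly $\mathbf s$. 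Distinct $\mathbf s$ then have distinct top levels, and independence follows by the usual triangularity, with no canonical choice of decomposition needed.

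This uses the induction hypothesis in the form that $\{f^{\mathbf u}v_\mu\mid\mathbf u\in S(\mu|0^n)\}$ is a basis of $\gr V(\mu|0^n)$. It also uses that each other $f^{\mathbf t}v_\mu$ is a combination of $f^{\mathbf u}v_\mu$ with $\mathbf u<\mathbf t$, which Proposition \ref{prop-straightening} together with independence provides. This is exactly the statement that sums of essential monomials are essential, \cite[Proposition 3.12]{Flag-Degen}, which the paper's proof invokes.
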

\begin{proof}
    We decompose $S(\lambda|0^n)$ into the Minkowski sum
    \begin{equation*}
        S(\lambda|0^n)=\left(\sum_{i=1}^{l(\lambda)}(\lambda_i-\lambda_{i+
        1})S(1^i|0^n)\right)\cap\left\{\mathbf{m}\in\Z^{\tilde{\Phi}^+}\,\big|\, m_\alpha\in\{0,1\}\text{ for all }\alpha\in\Phi^+_{\overline1
        }\right\},
    \end{equation*}
    where $\lambda_{l(\lambda)+1}:=0$ and $kS(1^i|0^n)$ denotes the $k$-fold Minkowski sum of $S(1^i|0^n)$ with itself.
    Now, we know from section \ref{sec-indep-column-strip}, that our set of vectors for exterior powers $\bigwedge^i\C^{n|m}$ consists of \textit{essential monomials} (see Definition \ref{def-essential-mon-covariant}) for our monomial order, which is in particular homogeneous and thus refines the PBW filtration.
    Applying the same arguments as in \cite[Proposition 3.12]{Flag-Degen}, we conclude that the vectors $f^\mathbf{s}$ for $\mathbf{s}\in S(\lambda|0^n)$ are essential for $V(\lambda|0^n)$, and hence, linearly independent in the associated graded module $\mathrm{gr}\,V(\lambda|0^n)$.
\end{proof}
\section{Going down the hook: The basis for $(\lambda|\mu,0^{n-1})$}\label{sec-hook}
In this section, we extend our results to partitions of the form $(\lambda|\mu,0^{n-1})$, where $\lambda=(\lambda_1\geq\cdots\geq\lambda_{m-1}\geq \lambda_m)$ with $\lambda_m\neq0$ and $\mu\in\N$. We want to begin by providing a picture of the new pattern that we impose for $\mathfrak{gl}(3|2)$
\begin{equation}\label{eq-fflv-pattern-hook-3|2}
\begin{tikzcd}[column sep=1em, row sep=2em]
	&&&& {\lambda_1} \\
	&&& {\e_1-\e_2} \\
	&& {\e_1-\e_3} && {\lambda_2} \\
	& {\textcolor{red}{\e_1-\d_1}} && {\e_2-\e_3} \\
	{\textcolor{red}{\e_1-\d_2}} && {\textcolor{red}{\e_2-\d_1}} && {\lambda_3} \\
	& {\textcolor{red}{\e_2-\d_2}} && {\textcolor{red}{\e_3-\d_1}} \\
	&& {\textcolor{red}{\e_3-\d_2}} && \mu \\
	&&& {\d_1-\d_2}
	\arrow[from=2-4, to=1-5]
	\arrow[from=3-3, to=2-4]
	\arrow[from=3-5, to=2-4]
	\arrow[from=4-2, to=3-3]
	\arrow[from=4-4, to=3-3]
	\arrow[from=4-4, to=3-5]
	\arrow[from=5-1, to=4-2]
	\arrow[from=5-3, to=4-4]
	\arrow[from=5-5, to=4-4]
	\arrow[from=6-2, to=4-2]
	\arrow[from=6-2, to=5-3]
	\arrow[from=6-4, to=5-5]
	\arrow[from=7-3, to=4-2]
	\arrow[from=7-3, to=5-3]
	\arrow[from=7-3, to=6-4]
	\arrow[color=blue, from=8-4, to=4-2]
	\arrow[color=blue, from=8-4, to=5-3]
	\arrow[color=blue, from=8-4, to=6-4]
	\arrow[from=8-4, to=7-5]
\end{tikzcd}.
\end{equation}
We use the usual set of inequalites as for our polytopes for the weights $(\lambda|0^2)$. But now, we add the following set of inequalities
\begin{align*}
    \mu\cdot(s_{\e_1-\d_1}+s_{\e_1-\e_3}+s_{\e_1-\e_2})+s_{\d_1-\d_2}\leq\lambda_1\cdot\mu\\
    \mu\cdot(s_{\e_2-\d_1}+s_{\e_2-\e_3}+s_{\e_1-\e_3}+s_{\e_1-\e_2})+s_{\d_1-\d_2}\leq\lambda_1\cdot\mu\\
    \mu\cdot(s_{\e_2-\d_1}+s_{\e_2-\e_3})+s_{\d_1-\d_2}\leq\lambda_2\cdot\mu\\
    \mu\cdot(s_{\e_3-\d_1})+s_{\d_1-\d_2}\leq\lambda_3\cdot\mu\\
    s_{\d_1-\d_2}\leq\mu
\end{align*}
We provide an interpretation of these inequalities later.

First, we define our set of inequalities for general $\gl(m|n)$:

We need to distinguish between extended Dyck paths ending in an even root or in the odd root $\e_m-\d_1$.
\begin{enumerate}
    \item{
        For every extended Dyck path of the form $\mathbf{p}=(p(0),\dots,p(u))$ such that $p(u)=\e_j-\e_{j+1}$ and $p(1)=\e_i-\d_l$ and $p(0)=\e_i-\d_k$ for some $1\leq l<k\leq n$  and $1\leq i\leq m$ and $1\leq j<m$, we set the conditions
        \begin{equation}\label{eq-hook-path-condition-generic}
            \mu\cdot\left(\sum_{t=1}^{u}s_{p(t)}\right)+s_{\d_1-\d_k}\leq \mu\cdot\lambda_j.
        \end{equation}
    }
    \item{
        For every extended Dyck path of the form $\mathbf{p}=(\e_m-\d_k,\dots,\e_m-\d_1)$, we set the conditions
        \begin{equation}\label{eq-hook-path-condition-edge}
            \mu\cdot\left(\sum_{t=1}^{u}s_{p(t)}\right)+s_{\d_1-\d_k}\leq \mu\cdot\lambda_m.
        \end{equation}
    }
\end{enumerate}

These conditions can be interpreted as follows: 
If along the extended Dyck path $\mathbf{p}$, the sum of the coordinates of the point $\mathbf{s}$ excluding $s_{p(0)}$ is equal to $\lambda_i$, then $s_{\d_1-\d_k}=0$.

Before we state our next result, we need to extend our subset of positive roots $\tilde{\Phi}^+$.

We define the new subset $\hat{\Phi}^+\subseteq\Phi^+$, where
\begin{align*}
    \hat{\Phi}^+_{\overline{0}}&=\left\{\e_i-\e_j\,\mid\,1\leq i< j\leq m \right\}\cup \{\delta_1-\delta_r\,\mid\, 1<r\le n\}\\
    \hat{\Phi}^+_{\overline{1}}&=\Phi^+_{\overline{1}}.
\end{align*}
\begin{defn}
    Let $\lambda$ be a partition of length $l(\lambda)\leq m$ with $\lambda_m\neq0$ and $\mu\in\N$. We define the polytope $P(\lambda|\mu,0^{n-1})\subseteq\mathbb{R}^{\hat{\Phi}^+}_{\geq0}$ subject to the conditions \eqref{eq-inequality-fflv-strip-even-dyck-path}, \eqref{eq-inequality-fflv-strip-extended-dyck-path}, \eqref{eq-inequality-fflv-strip-odd-root-bound-1} as well as \eqref{eq-hook-path-condition-generic} and \eqref{eq-hook-path-condition-edge}.
    
    We denote the set of lattice points of $P(\lambda|\mu,0^{n-1})$ by $S(\lambda|\mu,0^{n-1}):=P(\lambda|\mu,0^{n-1})\cap\Z^{\hat{\Phi}^+}$.
\end{defn}
\begin{rem}
    Similarly, as in Remark \ref{rem-strip-typical-matches-fourier-kus}, if $\lambda_m\geq n$, then the conditions \eqref{eq-hook-path-condition-generic} and \eqref{eq-hook-path-condition-edge} become redundant and we retrieve the polytope from \cite[Theorem 1.1]{Fourier-Kus}
\end{rem}
\begin{thm}\label{thm-fflv-big-hook}
    Let $\lambda$ be a partition of length $l(\lambda)\leq m$ with $\lambda_m\neq0$ and $\mu\in\N$. Then the set
    \begin{equation*}
        \left\{f^{\mathbf{s}}\cdot v_{(\lambda|\mu,0^{n-1})}\middle|\ \mathbf{s}\in S(\lambda|\mu,0^{n-1})\right\}
    \end{equation*}
    provides a basis of the associated graded space $\gr V(\lambda|\mu,0^{n-1})$.
\end{thm}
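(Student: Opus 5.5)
The plan has the same three parts as for Theorem \ref{thm-fflv-strip}: a dimension count, a spanning argument, and linear independence via a Minkowski decomposition. The decomposition in question is part (2) of the first main result,
\begin{equation*}
S(\lambda|\mu,0^{n-1})=\bigl(S(\lambda-1^m|0^n)+S(1^m|\mu,0^{n-1})\bigr)\cap\{0,1\}^{d_2}.
\end{equation*}
Since $\lambda_m\geq 1$, Lemma \ref{lem-cartan-embedding} gives a Cartan embedding
\begin{equation*}
V(\lambda|\mu,0^{n-1})\hookrightarrow V(\lambda-1^m|0^n)\otimes V(1^m|\mu,0^{n-1}).
\end{equation*}
The second factor is $\bigwedge^m\C^{m|n}\otimes\Sym^{\mu}$-type: it is the one-dimensional Berezinian-like twist $e_1\wedge\cdots\wedge e_m$ tensored with the module generated by $e_{\overline 1}^{\,\mu}$ in the appropriate super-symmetric sense. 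Concretely, it is the covariant module of hook shape $(m+1,1^{\mu-1})$ transposed. The point is that this module has an explicit weight basis, as in Section \ref{sec-indep-column-strip}, which I will match bijectively with $S(1^m|\mu,0^{n-1})$ by hand.

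\textbf{Spanning.} I would first treat the inequalities \eqref{eq-inequality-fflv-strip-even-dyck-path}--\eqref{eq-inequality-fflv-strip-odd-root-bound-1} exactly as in Proposition \ref{prop-straightening}. The relevant ideal elements still exist, because Lemma \ref{lem-monomial-in-ideal} only counted occurrences of $e_i$, and these are still $\lambda_i$ in the highest weight vector realized in $(\C^{m|n})^{\otimes N}$. For the new inequalities \eqref{eq-hook-path-condition-generic} and \eqref{eq-hook-path-condition-edge}, I need a new family of ideal elements. In the tensor realization, row $i$ carries $\lambda_i$ copies of $e_i$, and $f_{\d_1-\d_k}$ acts only on the $\mu$ copies of $e_{\overline 1}$. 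A violation with the sum along $p(1),\dots,p(u)$ equal to $\lambda_j$ and $s_{\d_1-\d_k}\geq1$ should come from the element
\begin{equation*}
f_{\e_j-\e_m}^{\lambda_j-|I|}\prod_{l\in I}f_{\e_j-\d_l}\cdot f_{\e_j-\d_k}f_{\d_1-\d_k}^{-1}\cdots .
\end{equation*}
More honestly, I would use the fact that every $e_j$ is killed by the odd and even operators while $f_{\d_1-\d_k}$ must move an $e_{\overline1}$, and then apply $\partial_{\d_1-\d_k}$-type derivations. Here the notation means: start with the element $f_{\e_j-\e_m}^{\lambda_j+1-|I|-1}\prod_{l\in I\cup\{k\}}f_{\e_j-\d_l}$, which annihilates $v$ by the counting argument, and apply $\partial_{\e_j-\d_1}$, which produces $f_{\d_1-\d_k}$ from $f_{\e_j-\d_k}$. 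I would then run the same leading-term induction on $|\tau|$ as in the Claim to get a straightening law whose leading term is $f^{\mathbf s}$. The order of Definition \ref{def-monomial-order} puts the $f_{\d_1-\d_k}$ variables on top, which is what should make this leading term work. Summing over all these relations shows that the monomials indexed by $S(\lambda|\mu,0^{n-1})$ span.

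\textbf{Independence and the main obstacle.} For independence I would prove the Minkowski identity above directly on lattice points. One inclusion is a check of inequalities. For the reverse inclusion, given $\mathbf s$, I would peel off a point of $S(1^m|\mu,0^{n-1})$ carrying all $s_{\d_1-\d_k}$ together with a suitable diagonal chain in the even and odd part, using the marked chain structure of Proposition \ref{prop-strip-fflv-is-marked-chain}. The inequalities \eqref{eq-hook-path-condition-generic} guarantee that the remainder satisfies the strip inequalities for $\lambda-1^m$. Next I would verify that the monomials for $V(1^m|\mu,0^{n-1})$ are essential, by the explicit bijection with a weight basis. Then the argument of \cite[Proposition 3.12]{Flag-Degen}, applied to the Cartan embedding and the strip case already proved, gives essentiality and hence linear independence. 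I expect the peeling step to be the main obstacle, because the $\{0,1\}$ truncation interacts with the weighted inequalities. If it fails, the fallback is to show $|S(\lambda|\mu,0^{n-1})|=\dim V(\lambda|\mu,0^{n-1})$, comparing with the hook Schur function or Gelfand--Tsetlin count of \cite{Molev}; combined with spanning, this gives a basis.
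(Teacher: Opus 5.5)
Your overall plan is the paper's. It has four parts: new straightening relations from an odd derivation that turns $f_{\e_i-\d_k}$ into $f_{\d_1-\d_k}$, a lattice-point count for $(1^m|\mu,0^{n-1})$, the Minkowski identity of Proposition~\ref{prop-minkowski-big-hook}, and \cite[Proposition 3.12]{Flag-Degen} through the Cartan embedding.

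Your spanning variant is fine. You apply $\partial_{\e_j-\d_1}$ to the Lemma~\ref{lem-monomial-in-ideal} element for $I\cup\{k\}$ before the even derivations; the paper instead applies $\partial_{\e_i-\d_1}$ to the finished relation for $\mathbf s-e_{\d_1-\d_k}+e_{\e_i-\d_k}$. Your version still has leading term $f^{\mathbf s}$. The term where $\partial_{\e_j-\d_1}$ hits $f_{\e_j-\d_k}$ is $f_{\d_1-\d_k}$ times an element of exactly the shape treated in Proposition~\ref{prop-straightening}, and every other term carries only some $f_{\d_1-\d_l}$ with $l<k$.

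\textbf{The second tensor factor is misidentified.} The $(m,n)$-hook diagram of $(1^m|\mu,0^{n-1})$ is $(1^m,1^\mu)=(1^{m+\mu})$. So $V(1^m|\mu,0^{n-1})\cong\bigwedge^{m+\mu}\C^{m|n}$, with highest weight vector $e_1\wedge\cdots\wedge e_m$ followed by $\mu$ factors $e_{\overline1}$. It is not a twist by a one-dimensional module: $\bigwedge^m\C^{m|n}$ is not one-dimensional once $n\geq1$. The transpose of $(m+1,1^{\mu-1})$ is the hook $(\mu,1^m)$, which is a column only when $\mu=1$. Your count depends on this, and the weight basis to match is the super-wedge basis indexed by $B_{m+\mu}$.

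When you do that matching by hand, you will also need the bound $\sum_{r=2}^{n}s_{\d_1-\d_r}\leq\mu$. The paper uses it implicitly (it appears as $s_{\d_1-\d_2}\leq\mu$ in the $\gl(3|2)$ example), but it is absent from the general list of inequalities. For $\gl(1|3)$ and $(1|1,0,0)$, the point with $s_{\d_1-\d_2}=s_{\d_1-\d_3}=1$ and all other coordinates $0$ satisfies every listed inequality, yet $f_{\d_1-\d_2}f_{\d_1-\d_3}$ kills $e_1\wedge e_{\overline1}$. Your own remark that the $f_{\d_1-\d_r}$ act only on the $\mu$ copies of $e_{\overline1}$ supplies the missing relation: by the counting of Lemma~\ref{lem-monomial-in-ideal}, every product of $\mu+1$ such variables lies in $I(\lambda|\mu,0^{n-1})$.

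\textbf{The genuine gap is the peeling behind Proposition~\ref{prop-minkowski-big-hook}.} This step carries the independence proof, you leave it open, and as sketched it would not work.
Every point of $S(1^m|\mu,0^{n-1})$ has all even coordinates $0$, since the even bounds are $\lambda_i-\lambda_j=0$. So no even ``diagonal chain'' can be removed.
A removed root $\e_a-\d_b$ forces $s^2_{\d_1-\d_r}=0$ for $r>b$. Hence the removed odd roots must form a $B_m$-configuration on the levels $b\geq t:=\max\{r\mid s_{\d_1-\d_r}\neq0\}$ only. A decomposition supplied by Proposition~\ref{prop-minkowski-decomp} need not respect this.

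The paper lets $\mathbf s^2$ carry all $s_{\d_1-\d_r}$ and chooses its odd part greedily. It takes all $\e_j-\d_n$ with $s=1$, then on each level $i=n-1,\dots,t$ those $\e_j-\d_i$ with $s=1$ whose row exceeds every row chosen so far. Call a path tight if its sum equals its bound $\lambda_c$. The greedy choice lowers every tight extended Dyck path by one, and this is exactly where both hypotheses enter:
(i) $\lambda_m\geq1$ forces a tight path to contain an odd root with $s=1$, because its even part lies on an even Dyck path starting at $\e_{m-1}-\e_m$ and is bounded by $\lambda_c-\lambda_m$.
(ii) \eqref{eq-hook-path-condition-generic} and \eqref{eq-hook-path-condition-edge}, applied after prepending roots of the same row up to level $t$, forbid the first such root from lying on a level below $t$.
(iii) For an unchosen $\e_j-\d_b$ with $s=1$ and $b\geq t$, there is a chosen $\e_{j'}-\d_p$ with $p>b$ and $j'\geq j$. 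A tight path whose first odd root with $s=1$ is $\e_j-\d_b$ could then be rerouted to begin at $\e_{j'}-\d_p$, and it would exceed its bound.

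Your fallback, comparing $|S(\lambda|\mu,0^{n-1})|$ with a character or Gelfand--Tsetlin count, would make both the Minkowski identity and \cite[Proposition 3.12]{Flag-Degen} unnecessary, since spanning plus the right cardinality already gives a basis. But that count is a nontrivial statement you have not supplied.
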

We want to proceed as in the case for $(\lambda|0^n)$ and verify three things:
\begin{enumerate}
    \item{
        Prove a straightening relation implying that our set of lattice points provides us a generating set of $\gr V(\lambda|\mu,0^{n-1})$.
    }
    \item{
        Check that the set of lattice points gives us a basis for the weights $(1^m|\mu,0^{n-1})$, which correspond to exterior powers $\bigwedge^{m+\mu}\C^{m|n}$.
    }
    \item{
        Check that the lattice points $S(\lambda|\mu,0^{n-1})$ can be decomposed into the Minkowski sum $S(\lambda-1^m|0^{n})+S(1^m|\mu,0^{n-1})$.
    }
\end{enumerate}
\subsection{The generating set for $(\lambda|\mu,0^{n-1})$}
We show that monomials breaking the conditions \eqref{eq-hook-path-condition-generic} or \eqref{eq-hook-path-condition-edge} can be straightened using derivations.

We begin by considering the extended Dyck path $\mathbf{p}=(p(0),\dots,p(u))$ with $p(1)=\e_i-\d_l$ and $p(0)=\e_i-\d_k$. Without loss of generality, we assume $p(u)=\e_1-\e_2$.
Then using notation from Proposition \ref{prop-straightening}, we know that we have
\begin{equation*}
    f^{\mathbf{s}'}+\sum_{\mathbf{s}\succ\mathbf{t}}c_{\mathbf{t}}f^{\mathbf{t}}\in I(\lambda),
\end{equation*}
where $\mathbf{s}'=\mathbf{s}-e_{\d_1-\d_k}+e_{\e_i-\d_k}$.

We now claim
\begin{claim}
    We have that
    \begin{equation*}
        \partial_{\e_i-\d_1}\left(f^{\mathbf{s}'}+\sum_{\mathbf{s}\succ\mathbf{t}}c_{\mathbf{t}}f^{\mathbf{t}}\right)=f^{\mathbf{s}}+\sum_{\mathbf{s}\succ\mathbf{t'}}c_{\mathbf{t'}}f^{\mathbf{t'}}\in I(\lambda|\mu,0^{n-1})
    \end{equation*}
\end{claim}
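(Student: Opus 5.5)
The plan is to obtain the straightening relation for $\mathbf{s}$ by applying one odd derivation to the relation for $\mathbf{s}'$. Three things need checking: that $I(\lambda|\mu,0^{n-1})$ is stable under $\partial_{\e_i-\d_1}$, that $\partial_{\e_i-\d_1}$ sends $f^{\mathbf{s}'}$ to a nonzero multiple of $f^{\mathbf{s}}$ plus lower terms, and that the lower terms $f^{\mathbf{t}}$ are sent to combinations of monomials below $f^{\mathbf{s}}$.

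\textbf{Step 1: ideal and stability.} First I make sure the relation for $\mathbf{s}'$ lies in $I(\lambda|\mu,0^{n-1})$ and not only in $I(\lambda|0^n)$.
\begin{itemize}
\item Via Lemma \ref{lem-cartan-embedding}, realise $v_{(\lambda|\mu,0^{n-1})}$ as $v_{(\lambda-1^m|0^n)}\otimes v_{(1^m|\mu,0^{n-1})}$ inside a tensor power of $\C^{m|n}$.
\item In this realisation the basis vector $e_i$ occurs exactly $\lambda_i$ times. So the argument of Lemma \ref{lem-monomial-in-ideal} still shows that $f^{\lambda_i+1-k}_{\e_i-\e_m}\prod_{j\in I}f_{\e_i-\d_j}$ annihilates the highest weight vector.
\item Hence the proof of Proposition \ref{prop-straightening} runs verbatim and produces the relation for $\mathbf{s}'$ in $I(\lambda|\mu,0^{n-1})$.
\end{itemize}
Next, the ideal is stable under every $\partial_\alpha$, $\alpha\in\Phi^+$. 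The reason is that $\partial_\alpha$ is the associated graded of $\mathrm{ad}\,e_\alpha$ on $\mathcal{U}(\mathfrak{n}^-)$, and $e_\alpha$ kills the highest weight vector. For odd $\alpha$, $\partial_\alpha$ is an odd super-derivation, so the only new feature is a sign, which is harmless.

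\textbf{Step 2: action on the leading monomial.} I compute the effect of $\partial_{\e_i-\d_1}$ on the generators.
\begin{itemize}
\item $\beta-(\e_i-\d_1)$ is a positive root only for $\beta=\e_i-\d_j$ with $j>1$, and then it equals $\d_1-\d_j$.
\item Hence $\partial_{\e_i-\d_1}$ replaces a single factor $f_{\e_i-\d_j}$ by $f_{\d_1-\d_j}$ and kills all other generators.
\end{itemize}
Applied to $f^{\mathbf{s}'}$, this gives $\pm f^{\mathbf{s}}$ plus terms obtained by replacing some other factor $f_{\e_i-\d_{k'}}$, $k'\neq k$, occurring in $\mathbf{s}'$.

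By the definition of an extended Dyck path, the odd indices strictly decrease along the path. Since $p(0)=\e_i-\d_k$ is the first entry, $k$ is the largest $\d$-index in row $i$, so every other such factor has $k'<k$. In the monomial order, $f_{\d_1-\d_k}>f_{\d_1-\d_{k'}}$ for $k'<k$, and all $f_{\d_1-\d_\bullet}$ are larger than every other variable. So these extra terms are strictly smaller than $f^{\mathbf{s}}$ in the homogeneous lexicographic order.

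\textbf{Step 3 (main obstacle): the lower terms.} I must show that $\partial_{\e_i-\d_1}f^{\mathbf{t}}$ involves only monomials $\prec\mathbf{s}$ whenever $\mathbf{t}\prec\mathbf{s}'$.
\begin{itemize}
\item Every term of $\partial_{\e_i-\d_1}f^{\mathbf{t}}$ contains exactly one variable $f_{\d_1-\d_j}$, which is its largest variable. So it suffices to show that $j\le k$ for every $f_{\e_i-\d_j}$ dividing $f^{\mathbf{t}}$.
\item In the case $j=k$, I additionally need the remaining part to be $\prec$ the corresponding part of $\mathbf{s}$.
\end{itemize}
The approach I would try first is to use the explicit shape of the lower terms from the Claim in Proposition \ref{prop-straightening}. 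There all odd factors arise as $f_{\e_c-\d_j}$ with $j\in I$, where $I$ is the set of odd levels occupied by $\mathbf{s}'$. Consequently no $f_{\e_i-\d_j}$ with $j>\max I=k$ ever appears. Weight considerations confirm this, since each $\d_j$ occurs with multiplicity at most one.

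For the case $j=k$, I expect to show that the remaining factor of the term is exactly the remaining factor of $f^{\mathbf{t}}$ with $f_{\e_i-\d_k}$ removed. Since $\mathbf{t}\prec\mathbf{s}'$ and both monomials contain $f_{\e_i-\d_k}$, this remaining factor compares correctly with that of $\mathbf{s}$.
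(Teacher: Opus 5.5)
Your route is the paper's: apply $\partial_{\varepsilon_i-\delta_1}$ term by term, sort the output by which variable is hit, and use multiplicativity of the order when the hit variable is $f_{\varepsilon_i-\delta_k}$. However, the computation in Step 2 is wrong, and it leaves a hole in the case analysis.

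It is not true that $\beta-(\varepsilon_i-\delta_1)\in\Phi^+$ only for $\beta=\varepsilon_i-\delta_j$. For $\beta=\varepsilon_a-\delta_1$ with $a<i$ one gets $\varepsilon_a-\varepsilon_i\in\Phi^+_{\overline 0}$; indeed $[E_{i,m+1},E_{m+1,a}]=E_{i,a}\in\mathfrak n^-$. Hence $\partial_{\varepsilon_i-\delta_1}f_{\varepsilon_a-\delta_1}=f_{\varepsilon_a-\varepsilon_i}\neq 0$.

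This case really occurs. The $\varepsilon$-index only weakly decreases along $\mathbf p$, so the $\delta_1$-entry of the path may lie in a row $a<i$. For example, take $\mathbf p=(\varepsilon_2-\delta_3,\varepsilon_2-\delta_2,\varepsilon_1-\delta_1,\varepsilon_1-\varepsilon_2)$ in $\mathfrak{gl}(2|3)$ with $i=2$. Moreover, the lower terms $f^{\mathbf t}$ from Proposition~\ref{prop-straightening} can contain factors $f_{\varepsilon_c-\delta_1}$ with $c<i$. So Step 2 misses terms of $\partial_{\varepsilon_i-\delta_1}f^{\mathbf s'}$, and the first assertion of Step 3 (every term contains exactly one variable $f_{\delta_1-\delta_j}$) is false.

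The repair is short. Neither $f^{\mathbf s'}$ nor any $f^{\mathbf t}$ contains a variable $f_{\delta_1-\delta_\bullet}$, since the $\mathbf t$ arise from the even derivations $\partial_{a,b}$ only. So the monomial obtained from $f^{\mathbf t}$ (or $f^{\mathbf s'}$) by replacing $f_{\varepsilon_a-\delta_1}$ with $f_{\varepsilon_a-\varepsilon_i}$ contains none either. It has the same degree as $f^{\mathbf s}$, and the first variable from the top at which the two differ is $f_{\delta_1-\delta_k}$, where $f^{\mathbf s}$ has exponent $1$. Hence these terms lie strictly below $f^{\mathbf s}$. This is exactly the paper's third case; note that there the nonvanishing condition is misprinted as $a>i$ and should read $a<i$.

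With this case added, your proof is complete and in fact more careful than the paper's. Your Step 1 justifies two things the paper takes for granted: that the relation for $\mathbf s'$ lies in $I(\lambda|\mu,0^{n-1})$, and that this ideal is $\partial$-stable. Your observation that every odd factor of every $f^{\mathbf t}$ has $\delta$-index in $I$, hence at most $k$, rules out hits on $f_{\varepsilon_i-\delta_b}$ with $b>k$. Such hits would produce the larger variable $f_{\delta_1-\delta_b}$, and the paper's case list silently omits them.
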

\begin{proof}
    We apply the differential $\partial_{\e_i-\d_1}$ to each term $f^\mathbf{t}$ and observe what monomials the actions on every variable yield us. 
    If $f^{\mathbf{t}}$ contains only even root vectors of the form $f_{\e_a-\e_b}$, then $\partial_{\e_i-\d_1}(f^{\mathbf{t}})=0$. In case we apply $\partial_{\e_i-\d_1}$ on the variable $f_{\e_a-\d_b}$, then this is begin replaced by $f_{\e_a-\e_i-\d_b+\d_1}$. 
    We distiguish now on $a$ and $b$:
    \begin{description}
        \item[$a=i$ and $b<k$]{
            We replace $f_{\e_a-\d_b}$ by the root vector $f_{\d_1-\d_b}$ which is smaller than $f_{\d_1-\d_k}$ in our order.
        }
        \item[$a=i$ and $b=k$]{
            We get the monomial $f^{\mathbf{t}-e_{\e_i-\d_k}}f_{\d_1-\d_k}$ and this monomial is still smaller than $\partial_{\e_i-\d_1}(f^{\mathbf{s}'})=f^{\mathbf{s}}$ as otherwise we would have $f^\mathbf{s'}<f^\mathbf{t}$.
        }
        \item[$a\neq i$ and $b=1$]{
            We get the variable $f_{\e_a-\e_i}$ or zero depending on if $a>i$. In the case $a>i$, we still have $f_{\e_a-\e_i}<f_{\d_1-\d_k}$.
        }
    \end{description}
\end{proof}
\subsection{The basis for exterior powers $\bigwedge^{m+\mu}\C^{m|n}$}
We now want to show that our set of lattice points provides us with a basis in the case of the weights $(1^m|\mu,0^{n-1})$ which are the exterior powers $\bigwedge^{m+\mu}\C^{m|n}$.
\begin{claim}
    The set
    \begin{equation*}
        \left\{f^{\mathbf{s}}\cdot v_{(\lambda|\mu,0^{n-1})}\middle|\mathbf{s}\in S(1^{m}|\mu,0^{n-1})\right\}
    \end{equation*}
    is linearly independent in $\gr V(1^m|\mu,0^{n-1})$.
\end{claim}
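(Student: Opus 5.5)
The plan is to exploit that $V(1^m|\mu,0^{n-1})\simeq\bigwedge^{m+\mu}\C^{m|n}$ has one-dimensional weight spaces. It then suffices to show two things: the monomials $f^{\mathbf{s}}v$ with $\mathbf{s}\in S(1^m|\mu,0^{n-1})$ have pairwise distinct weights, and each of them is non-zero in $\gr V(1^m|\mu,0^{n-1})$. Since the PBW filtration is $\mathfrak h$-stable, $\gr$ preserves weight spaces and their dimensions, so these two facts give linear independence.

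\textbf{Step 1: describe $S(1^m|\mu,0^{n-1})$ explicitly, as in Lemma \ref{lem-bijection-small-fundamentals}.} With $\lambda=1^m$, the even inequalities \eqref{eq-inequality-fflv-strip-even-dyck-path} force $s_{\e_i-\e_j}=0$ for all $i<j\le m$. Together with \eqref{eq-inequality-fflv-strip-odd-root-bound-1}, the extended Dyck path inequalities \eqref{eq-inequality-fflv-strip-extended-dyck-path} say the following: if $s_{\e_{i_1}-\d_{b_1}}=s_{\e_{i_2}-\d_{b_2}}=1$ with $i_1<i_2$, then $b_1>b_2$. In other words, the support on odd roots is an ``antichain'' $\{(i_p,b_p)\}$ with the $i_p$ increasing and the $b_p$ strictly decreasing. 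The remaining coordinates are $s_{\d_1-\d_k}\in\{0,\dots,\mu\}$. The hook conditions \eqref{eq-hook-path-condition-generic} and \eqref{eq-hook-path-condition-edge} then forbid $s_{\d_1-\d_k}>0$ whenever some chosen odd root $\e_i-\d_l$ with $l<k$ lies on a path through $\e_i-\d_k$. This encodes a no-double-counting condition between the two mechanisms that produce a $\d_k$: moving $e_i\mapsto e_{\overline k}$, versus moving one copy of $e_{\overline 1}\mapsto e_{\overline k}$. I will also record that $\sum_k s_{\d_1-\d_k}\le\mu$.

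\textbf{Step 2: compute $f^{\mathbf{s}}v$ and check injectivity on weights.} Take $v=e_1\wedge\cdots\wedge e_m\wedge e_{\overline1}^{\wedge\mu}$. Each $f_{\e_i-\d_b}$ replaces $e_i$ by $e_{\overline b}$, and each $f_{\d_1-\d_k}$ replaces one $e_{\overline 1}$ by $e_{\overline k}$. Hence, in $V$ itself, $f^{\mathbf{s}}v$ is a non-zero multiple of the wedge basis vector of weight
\[
\sum_{i\notin R}\e_i+\Big(\mu-\sum_k s_{\d_1-\d_k}\Big)\d_1+\sum_{k\ge2}\Big(s_{\d_1-\d_k}+\#\{i: s_{\e_i-\d_k}=1\}\Big)\d_k ,
\]
where $R$ is the set of rows $i$ that carry a chosen odd root. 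Non-vanishing holds because odd vectors may repeat in the super-exterior power, so no sign cancellation or repetition kills the product. The proof of injectivity of $\mathbf{s}\mapsto\mathrm{wt}$ is a mirror of Lemma \ref{lem-bijection-small-fundamentals}: the set $R$ is read off the weight, and the antichain condition of Step 1 fixes how the odd indices are distributed to the rows in $R$. The hook conditions then say exactly how many copies of each $\d_k$ must come from $\d_1-\d_k$ instead. I would show that the remaining choice is unique by a greedy assignment: the largest odd indices go to the smallest rows. This gives a bijection with the weights of $\bigwedge^{m+\mu}\C^{m|n}$ below the highest weight that satisfy the relevant compatibility.

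\textbf{Step 3: show $f^{\mathbf{s}}v\notin\mathcal U(\mathfrak n^-)_{|\mathbf{s}|-1}v$.} This is the main obstacle. Since the weight space is one-dimensional, it suffices to show that no vector of this weight can be reached with fewer than $|\mathbf{s}|$ negative root vectors. For this I will use a counting lower bound: every root vector in $\mathfrak n^-$ changes the weight by $\e_b-\e_a$ with $a<b$ in the order on $K$, i.e.\ it moves one unit of weight to a larger index. So the PBW degree is at least the minimal number of unit moves needed to transport $\mathrm{wt}(v)$ to the target weight, which is a transport/earth-mover count. The plan is to check that the hook conditions of Step 1 make $|\mathbf{s}|$ exactly this minimum, namely $|R|$ plus the number of $\d_k$ with $k\ge2$ not already supplied from $R$. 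The inequalities \eqref{eq-hook-path-condition-generic} and \eqref{eq-hook-path-condition-edge} are designed precisely to exclude points where a cheaper route exists: such a route would instead use an odd root $\e_i-\d_k$ together with a lower-index odd root, or a composite even root. If the direct count turns out to be delicate, the fallback is a degeneration argument. Using the spanning result just proved (the straightening Claim), independence is equivalent to $|S(1^m|\mu,0^{n-1})|=\dim\bigwedge^{m+\mu}\C^{m|n}$, and this equality follows from the bijection with weights established in Step 2.
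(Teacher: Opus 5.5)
Your main route is correct in outline but differs from the paper's. Step~1 contains an error that must be fixed before Steps~2--3 cover all of $S(1^m|\mu,0^{n-1})$.

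\textbf{Comparison.} The paper argues purely by counting. It extends Lemma~\ref{lem-bijection-small-fundamentals} to an explicit bijection between $S(1^m|\mu,0^{n-1})$ and the set $B_{m+\mu}$ indexing the wedge basis of $\bigwedge^{m+\mu}\C^{m|n}$, so $|S(1^m|\mu,0^{n-1})|=\dim V(1^m|\mu,0^{n-1})$. Independence then follows from the spanning property given by the straightening claim. That is exactly your fallback. Note that the fallback needs surjectivity onto \emph{all} weights of $\bigwedge^{m+\mu}\C^{m|n}$, not only the injectivity of your Step~2.

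Your main argument instead proves independence directly. It shows the weights are pairwise distinct, and that $f^{\mathbf s}v\notin\mathcal U(\mathfrak n^-)_{|\mathbf s|-1}v$. The latter uses the bound that a vector of weight $\nu$ needs PBW degree at least the sum of the positive coefficients of $\mathrm{wt}(v)-\nu$, since each root vector of $\mathfrak n^-$ moves one unit of weight to a larger index of $K$. This bypasses the straightening lemma entirely and does not even need one-dimensional weight spaces. It also yields more: the PBW degree of every weight vector.

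\textbf{The error.} Levels are not strictly decreasing. By condition (iv), an extended Dyck path meets each level $\delta_b$ exactly once, so $\varepsilon_{i_1}-\delta_b$ and $\varepsilon_{i_2}-\delta_b$ never lie on a common path. The correct description is: at most one chosen odd root per row, and $b_1\ge b_2$ whenever $i_1<i_2$. This matches the fact that $f(i)=f(j)\in\overline{[n]}$ is allowed in $B_m$.

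For example, $s_{\varepsilon_1-\delta_1}=s_{\varepsilon_2-\delta_1}=1$, with all else $0$, is a lattice point. For $m\ge 2$ it gives a nonzero multiple of $e_3\wedge\cdots\wedge e_m\wedge e_{\overline 1}^{\wedge(\mu+2)}$. As written, you only treat a proper subset of $S$. With weak monotonicity your Steps~2--3 go through unchanged, since ties in the greedy assignment are harmless. Separately, the $\delta_1$-coefficient in your weight formula is missing the term $+\#\{i: s_{\varepsilon_i-\delta_1}=1\}$.

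\textbf{What actually carries Steps 2--3.} For $\lambda=1^m$ all even coordinates vanish. Conditions \eqref{eq-hook-path-condition-generic} and \eqref{eq-hook-path-condition-edge} then say precisely that $s_{\delta_1-\delta_k}>0$ forces every chosen odd root to sit at level $\ge k$. The reason is that any odd root at level $<k$ lies on a path with $p(0)=\varepsilon_i-\delta_k$ and $p(1)=\varepsilon_i-\delta_{k-1}$ for a suitable $i$.

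This fact gives injectivity: the rows' levels must be the $|R|$ largest odd indices occurring in the weight. It also gives Step~3. Write $r_1$ for the number of rows sent to $\delta_1$. The positive part of $\mathrm{wt}(v)-\mathrm{wt}(f^{\mathbf s}v)$ then has total mass $|R|+\max\bigl(0,\sum_{k\ge2}s_{\delta_1-\delta_k}-r_1\bigr)$. This equals $|\mathbf s|=|R|+\sum_{k\ge2}s_{\delta_1-\delta_k}$ exactly because $r_1>0$ forces all $s_{\delta_1-\delta_k}=0$.

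Your formula ``$|R|$ plus the number of $\delta_k$ not already supplied from $R$'' is just $|\mathbf s|$ restated. What must be checked is its equality with the positive-mass lower bound.
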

\begin{proof}\phantom{\qedhere}
    Similarly to the case for exterior powers $\bigwedge^k\C^{m|n}$ for $k\leq m$ in Lemma \ref{lem-bijection-small-fundamentals}, we want to define a specific set of functions parameterizing a weight basis of $\bigwedge^{m+\mu}\C^{m|n}$. 
    
    We define the set $B_{m+\mu}$ to be the set of functions $f:[m+\mu]\rightarrow K$ satisfying the following conditions:
    \begin{itemize}
        \item{
            If $f(i)\in[m]$, then $f(i)=i$.
        }
        \item{
            For $i< j$ such that $\{f(i), f(j)\}\subset K\setminus [k]$, either $f(i)>f(j)$ or $f(i)=f(j)\in \overline{[n]}$.
        }
    \end{itemize}
    
    Using similar arguments, we see that $B_{m+\mu}$ parametrises a weight basis of $\bigwedge^{m+\mu}\C^{m|n}$ under the mapping.
    \begin{equation*}
        f\mapsto e_{f(1)}\wedge\cdots\wedge e_{f(m+\mu)}
    \end{equation*}

    Hence, we reduce the claim to
    \begin{lem}
        The lattice points $S(1^m|\mu,0^{n-1})$ of $P(1^m|\mu,0^{n-1})$ are in bijection with $B_{m+\mu}$.
    \end{lem}
    \begin{proof}[Proof of Lemma]
        We begin by constructing a lattice point $\mathbf{s}^f\in S(1^m|\mu,0^{n-1})$ from a function $f\in B_{m+\mu}$. 
        If we denote the bijection from Lemma \ref{lem-bijection-small-fundamentals} for $B_m$ by $\varphi:B_m\rightarrow S(1^m|0^n)$, then we set $\mathbf{s}^f$ as follows
        \begin{align*}
            s^f_{\e_a-\d_b}&=\varphi(f\vert_{[m]})_{\e_a-\d_b}\\
            s^f_{\d_1-\d_c}&=\#(f\vert_{[m+1,\dots, m+\mu]}^{-1}(\overline{c})).
        \end{align*}

        We see that this is well-defined as follows: If $s^f_{\e_a-\d_b}=1$, then we have that $f(a)=\overline{b}$. 
        This means that $f(m+i)\leq \overline{b}$ for all $\overline{i}\in[\overline{n}]$. This means however that $s^f_{\d_1-\d_i}=0$ for all $i>b$ which is exactly the conditions defining $P(1^m|\mu,0^{n-1})$.

        To get the inverse construction, let $\mathbf{s}\in S(1^m|\mu,0^{n-1})$ be a lattice point. 
        We construct the function $f^\mathbf
        s\in B_{m+\mu}$ as follows:
        We firstly define for all $1\leq i\leq m$
        \begin{align*}
            f(i)=\varphi^{-1}(\mathbf{s}')(i),
        \end{align*}
        where $\mathbf{s'}\in S(1^m|0^n)$ is the projection of $\mathbf{s}$ onto $\Z^{\tilde{\Phi}^+}$.

        For the remaining elements, we need more notation:
        Let $n\geq i_1>\dots>i_r> 1$ be
        such that $s_{\d_1-\d_i}\neq0$ if and only $i=i_t$ for some $1\leq t\leq r$.

        Then, we define for $1\leq i\leq \mu$
        \begin{equation*}
            f(m+i)=\begin{cases}
                i_t\hspace{1em}&\text{ if } \sum\limits_{p=1}^{t-1}s_{\d_1-\d_{i_p}}<i\leq\sum\limits_{p=1}^{t}s_{\d_1-\d_{i_p}}\\
                \overline{1}&\text{ if }\sum\limits_{p=1}^{r}s_{\d_1-\d_{i_p}}<i
            \end{cases}
        \end{equation*}
        By construction, this function is in $B_{m+\mu}$ and hence yields us the desired bijection.
    \end{proof}
\end{proof}
\subsection{The Minkowski decomposition}
In order for us to get the linear independence for all weights of the form $(\lambda|\mu,0^{n-1})$ we prove the following claim.
\begin{prop}\label{prop-minkowski-big-hook}
    Let $\lambda$ be a partition of length $l(\lambda)\leq m$ with $\lambda_m\neq0$ and $\mu\in\N$. We have the decomposition
    \begin{equation*}
        S(\lambda|\mu,0^{n-1})=\left(S(\lambda-1^m|0^n)+S(1^m|\mu,0^{n-1})\right)\cap\left\{\mathbf{m}\in\Z^{\hat{\Phi}^+}\,\big|\, m_\alpha\in\{0,1\}\text{ for all }\alpha\in\Phi^+_{\overline1
        }\right\}.
    \end{equation*}
\end{prop}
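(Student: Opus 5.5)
We prove the two inclusions separately, writing $M$ for the set of $\mathbf{m}\in\Z^{\hat{\Phi}^+}$ with $m_\alpha\in\{0,1\}$ for all odd $\alpha$, as in Proposition \ref{prop-minkowski-decomp}. Throughout we regard $S(\lambda-1^m|0^n)$ as a subset of $\Z^{\hat{\Phi}^+}$ with vanishing $\d_1-\d_c$ coordinates.

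\emph{The inclusion $\supseteq$.} Let $\mathbf{a}\in S(\lambda-1^m|0^n)$ and $\mathbf{b}\in S(1^m|\mu,0^{n-1})$ with $\mathbf{a}+\mathbf{b}\in M$.
\begin{enumerate}
\item The inequalities \eqref{eq-inequality-fflv-strip-even-dyck-path}, \eqref{eq-inequality-fflv-strip-extended-dyck-path} are linear with right-hand sides additive in $\lambda$, namely $(\lambda_i-\lambda_j)=(\lambda-1^m)_i-(\lambda-1^m)_j+0$ and $\lambda_r=(\lambda_r-1)+1$. Hence they hold for the sum.
\item \eqref{eq-inequality-fflv-strip-odd-root-bound-1} is exactly the intersection with $M$.
\item For \eqref{eq-hook-path-condition-generic} and \eqref{eq-hook-path-condition-edge}, write the left side as $\mu\sum_{t\ge1}(a+b)_{p(t)}+b_{\d_1-\d_k}$. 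Bound $\mu\sum_{t\geq 1} a_{p(t)}\le \mu\sum_{t\ge0}a_{p(t)}\le\mu(\lambda_j-1)$ using \eqref{eq-inequality-fflv-strip-extended-dyck-path} for $\lambda-1^m$. Then bound $\mu\sum_{t\ge1}b_{p(t)}+b_{\d_1-\d_k}\le\mu\cdot 1$ by the corresponding inequality for $(1^m|\mu,0^{n-1})$.
\end{enumerate}
Summing the two bounds gives $\mu\lambda_j$, so $\mathbf{a}+\mathbf{b}\in S(\lambda|\mu,0^{n-1})$.

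\emph{The inclusion $\subseteq$.} Given $\mathbf{s}\in S(\lambda|\mu,0^{n-1})$, we construct $\mathbf{b}$ first and set $\mathbf{a}=\mathbf{s}-\mathbf{b}$.
\begin{enumerate}
\item Put $b_{\d_1-\d_c}=s_{\d_1-\d_c}$ for all $c$, so that $\mathbf{a}$ has no $\d$-coordinates.
\item Restrict $\mathbf{s}$ to $\tilde{\Phi}^+$, giving $\mathbf{s}'\in S(\lambda|0^n)=\hat S(\lambda)\cap M$. By Proposition \ref{prop-minkowski-decomp-chain}, and specifically the decomposition of \cite[Lemma 2]{Fourier-Chain-Polytopes} with the marking $\omega_{\mathcal P}$ (which is $1^m$ since $\lambda_m\neq0$), we can write $\mathbf{s}'=\mathbf{a}+\mathbf{b}'$ with $\mathbf{a}\in\hat S(\lambda-1^m)$ and $\mathbf{b}'\in\hat S(1^m)$.
\item Since $\mathbf{s}'$ has $0/1$ odd coordinates and all summands are nonnegative, both $\mathbf{a}$ and $\mathbf{b}'$ lie in $M$. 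Hence $\mathbf{a}\in S(\lambda-1^m|0^n)$ and $\mathbf{b}'\in S(1^m|0^n)$.
\item It remains to check that $\mathbf{b}=(\mathbf{b}',s_{\d_1-\d_\bullet})$ satisfies the hook conditions for $(1^m|\mu,0^{n-1})$. These say: if $s_{\d_1-\d_k}>0$, then no chain through the odd roots $\e_i-\d_l$ with $l<k$ attached to $p(1)$ carries a $1$ in $\mathbf{b}'$.
\end{enumerate}

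\emph{Main obstacle: choosing the splitting.} The decomposition in step 2 is not unique, and we must choose it so that $\mathbf{b}'$ avoids those paths. From \eqref{eq-hook-path-condition-generic} for $\mathbf{s}$ we know: whenever $s_{\d_1-\d_k}>0$, each relevant path satisfies $\sum_{t\ge1}s_{p(t)}\le\lambda_j-1$.

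First attempt: use the explicit greedy construction of the $\omega$-part from Fourier's proof of \cite[Lemma 2]{Fourier-Chain-Polytopes}. That construction takes, along the chain poset, a maximal antichain-type selection of coordinates that can be decremented. We then modify it so that it selects $1$'s preferentially on chains not passing below $\d_k$ level, i.e.\ it prefers odd coordinates $\e_i-\d_l$ with $l\ge k$ or even coordinates.

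To justify that the modification is possible, use the slack $\lambda_j-1$ on each relevant path. This slack shows that $\mathbf{s}'$ restricted to those paths already lies in the dilated polytope for $\lambda-1^m$. Hence the $\omega$-summand can be taken supported off those paths, for instance via the bijection $\varphi$ with $B_m$ from Lemma \ref{lem-bijection-small-fundamentals}, choosing the image function $f$ with $f(i)\ge\overline{k}$ or $f(i)=i$.

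Finally, verify via the $B_{m+\mu}$ description that $\mathbf{b}$ lies in $S(1^m|\mu,0^{n-1})$. This last verification is where I expect the genuine combinatorial work to be.
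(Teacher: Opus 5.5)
\textbf{There is a genuine gap in the inclusion $\subseteq$.}

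Your inclusion $\supseteq$ is fine and matches the paper. The hook inequality splits as $\mu\sum_{t\ge1}a_{p(t)}\le\mu(\lambda_j-1)$ plus the hook inequality $\le\mu$ for $\mathbf{b}$.

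The problem is the inclusion $\subseteq$, which carries all the content of the proposition. Write $t=\max\{k>1\mid s_{\d_1-\d_k}\neq0\}$ (with $t=1$ if these all vanish), and call $l$ the level of $\e_a-\d_l$. You correctly reduce to finding $\mathbf{b}'\in S(1^m|0^n)$ with no odd $1$ of level $<t$ and with $\mathbf{s}'-\mathbf{b}'\in S(\lambda-1^m|0^n)$. But you never construct such a $\mathbf{b}'$.

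The step ``this slack shows \dots\ hence the $\omega$-summand can be taken supported off those paths'' does not follow. Slack on tails of level $<t$ says nothing about how to hit the tight paths that start at level $\ge t$. The abstract decomposition behind Proposition \ref{prop-minkowski-decomp-chain} gives no control over where the $1^m$-summand is supported.

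Also, $\mathbf{b}'$ cannot use even coordinates at all, because the even Dyck bounds for $1^m$ are $1-1=0$. So $\mathbf{b}'$ is the indicator of a set $A$ of odd roots with $s=1$, no two on a common extended Dyck path. Since the even bounds are unchanged, $\mathbf{s}'-\mathbf{b}'\in S(\lambda-1^m|0^n)$ holds exactly when $A$ meets every \emph{tight} extended Dyck path, i.e.\ one whose $\mathbf{s}$-sum equals its bound $\lambda_r$.

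Two facts are missing; the paper supplies them through its level sets $J_n,\dots,J_t$.

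\emph{First fact: every tight path contains an odd $1$ of level $\ge t$.}
\begin{enumerate}
\item With no odd $1$ at all, the path's sum is at most $\lambda_r-\lambda_m\le\lambda_r-1$. This is where $\lambda_m\neq0$ enters.
\item Suppose all its odd $1$'s have level $<t$. Its roots of level $\ge t$ then contribute $0$. Take its tail from level $t-1$ on (extended within its first row if necessary) and prefix $\e_i-\d_t$ in the same row $i$. This is a path as in \eqref{eq-hook-path-condition-generic} or \eqref{eq-hook-path-condition-edge} with $k=t$. Since $s_{\d_1-\d_t}\ge1$, integrality forces its sum to be $\le\lambda_r-1$.
\end{enumerate}

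\emph{Second fact: an explicit choice of $A$.}
\begin{enumerate}
\item At level $n$, take all rows $j$ with $s_{\e_j-\d_n}=1$.
\item At each level $i=n-1,\dots,t$, take the rows $j$ with $s_{\e_j-\d_i}=1$ that exceed every row chosen so far.
\item Rows weakly decrease along a path as the level drops, while the chosen rows strictly increase. Hence $A$ is an antichain. This gives $\mathbf{b}'\in S(1^m|0^n)$ and, since $A$ avoids levels $<t$, the hook inequalities for $\mathbf{b}$.
\item Let $\e_a-\d_b$ be the first odd $1$ of level $\ge t$ on a tight path $P$, and suppose it was not chosen. Then some chosen $\e_c-\d_{b'}$ with $b'>b$ and $c\ge a$ dominates it.
\item Reroute $P$ through $\e_c-\d_{b'}$ down to $\e_a-\d_b$. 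The part of $P$ before $\e_a-\d_b$ has sum $0$, so the new extended Dyck path has sum $\ge\lambda_r+1$. This contradicts $\mathbf{s}\in S(\lambda|\mu,0^{n-1})$.
\end{enumerate}

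With these two facts your outline becomes a complete proof. Without them, the inclusion $\subseteq$ is unproved.
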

\begin{proof}
    We show the two containments individually:
    \begin{itemize}
        \item['$\supseteq$']{
            Let $\mathbf{s}^1\in S(\lambda-1^m|0^n)$ and $\mathbf{s}^2\in S(1^m|\mu,0^{n-1})$ such that $s^1_{\e_i-\d_j}+s^2_{\e_i-\d_j}\leq 1$ for all $1\leq i\leq m$ and $1\leq j\leq n$.

            Then along every extended Dyck path $\mathbf{p}$ ending in $\e_i-\e_{i+1}$ or $\e_n-\d_1$, the sum  of the values of $\mathbf{s}^1$ on $\mathbf{p}$ is at most $\lambda_i-1$ or $\lambda_n-1$, respectively. 
            Hence adding the variables corresponding to $\d_1-\d_j$ from $\mathbf{s}^2$ does not violate the conditions \eqref{eq-hook-path-condition-edge} and \eqref{eq-hook-path-condition-generic} of $P(\lambda|\mu,0^{n-1})$.
        }
        \item['$\subseteq$']{
        Let $\mathbf{s}\in S(\lambda|\mu,0^{n-1})$ be a lattice point. We want to decompose $\mathbf{s}$ into
        \begin{equation*}
                \mathbf{s}=\mathbf{s}^1+\mathbf{s}^2,
        \end{equation*}
        where $\mathbf{s}^1\in S(\lambda-1^m|0^n)$ and $\mathbf{s}^2\in S(1^m|\mu,0^{n-1})$.

        We construct $\mathbf{s}^2\in S(1^m|\mu,0^{n-1})$ and show that $\mathbf{s}^1=\mathbf{s}-\mathbf{s}^2\in S(\lambda-1^m|0^n)$.
        
        Of course we have to set
        \begin{equation*}
            s^2_{\d_1-\d_j}=s_{\d_1-\d_j}\hspace{1em}\text{ for all }1<j\leq n.
        \end{equation*}
        The relevant part is for which odd roots $\e_a-\d_b$ we have to set $s^2_{\e_a-\d_b}=1$.

        The idea is going to be as follows: Among all odd roots $\e_a-\d_b$ with $s_{\e_a-\d_b}$=1, we want to choose increasing disjoint level sets starting from the bottom that would yield us an element $f\in B_m$ and do that until the level $1< i\leq n$ which is maximal such that $s_{\d_1-\d_i}\neq0$.

        We set
        \begin{equation*}
            t=\max\{1< r\leq n|s_{\d_1-\d_r}\neq 0\}
        \end{equation*}
        We now inductively define sets $J_i$ for $n\geq i\geq t$ as follows:

        For $i=n$, choose the set
        \begin{equation*}
            J_n=\{1\leq j\leq m|s_{\e_j-\d_n}=1\}
        \end{equation*}
        Assume then that $J_{k}$ has been constructed for all $k>i$ and that at least one of them is non-empty. 
        Let
        \begin{equation*}
            p=\max\{r>i|J_r\neq\emptyset\}
        \end{equation*}
        Then we set
        \begin{equation*}
            J_{i}=\{\max J_{p}< j\leq m| s_{\e_j-\d_i}=1\}
        \end{equation*}
        If $J_{k}=\emptyset$ for all $k>i$, then we set
        \begin{equation*}
            J_i=\{1\leq j\leq m| s_{\e_j-\d_i}=1\}
        \end{equation*}
        
        If we now set
        \begin{equation*}
            s^2_{\e_{j_i}-\d_i}=1\hspace{1em}\text{ for all }j_i\in J_i\text{ and }n\geq i\geq t,
        \end{equation*}
        then we see that $\mathbf{s}^2\in S(1^m|\mu,0^{n-1})$.

        Finally, we need to verify that $\mathbf
        {s}^1=\mathbf{s}-\mathbf{s}^2\in S(\lambda-1^m|0^n)$.

        Firstly, we do not need to be concerned with the FFLV-inequalites dealing with the even roots $\e_a-\e_b$, as the bounds $\lambda_i-1-(\lambda_j-1)=\lambda_i-\lambda_j$ remain unaffected.

        The crucial insight is the following: For every extended Dyck path $\mathbf{p}=(p(0),\dots,p(u))$ of maximal length such that there exists a $p(0)=\e_a-\d_b$ with $s_{\e_a-\d_b}=1$, we have that $n\geq b\geq t$ and $p(0)=\e_{j_b}-\d_b$ for some $j_b\in J_b$. 
        
        However, since we, by construction, remove exactly this odd root at the beginning of $\mathbf{p}$ from $\mathbf{s}$, we have that
        \begin{equation*}
            \sum_{r=1}^{u}{s_{p(r)}}\leq\lambda_c-1,
        \end{equation*}
        where $p(u)=\e_c-\e_{c+1}$ or $c=m$ and $p(u)=\e_c-\d_1$.
        Hence, we see that indeed $\mathbf{s}^1\in S(\lambda-1^m|0^n)$.
        }
    \end{itemize}
\end{proof}
\begin{figure}
    $
\begin{tikzcd}[ampersand replacement=\&]
	|[alias=e1d1]| {\e_1-\d_1} \& |[alias=e2d1]| {\e_2-\d_1} \& |[alias=e3d1]| {\e_3-\d_1} \& |[alias=e4d1, draw, ellipse]| {\e_4-\d_1} \& |[alias=mu]| \mu \\
	|[alias=e1d2]| {\e_1-\d_2} \& |[alias=e2d2, draw, ellipse]| {\e_2-\d_2} \& |[alias=e3d2, draw, ellipse]| {\e_3-\d_2} \& |[alias=e4d2]| {\e_4-\d_2} \& |[alias=d1d2, draw, rectangle]| {\d_1-\d_2} \\
	|[alias=e1d3]| {\e_1-\d_3} \& |[alias=e2d3]| {\e_2-\d_3} \& |[alias=e3d3, draw, ellipse]| {\e_3-\d_3} \& |[alias=e4d3, draw, rectangle]| {\e_4-\d_3} \& |[alias=d1d3, draw, rectangle]| {\d_1-\d_3} \\
	|[alias=e1d4, draw, rectangle]| {\e_1-\d_4} \& |[alias=e2d4]| {\e_2-\d_4} \& |[alias=e3d4, draw, rectangle]| {\e_3-\d_4} \& |[alias=e4d4]| {\e_4-\d_4} \& |[alias=d1d4]| {\d_1-\d_4}
\end{tikzcd}$
    \caption{Example of how the Minkowski decomposition works in $\gl(4|4)$ from Proposition \ref{prop-minkowski-big-hook}. For any root $\alpha$ above, we have $s_{\alpha}\neq 0$ if and only if $\alpha$ is in an oval or rectangle. 
    We set $s^2_{\e_a-\d_b}=1$ if and only if $\e_a-\d_b$ is in a rectangle. Similarly, we have $s_{\d_1-\d_j}\neq 0$ if and only if $\d_1-\d_j$ is in a rectangle. Hence, we have $t=3$, $J_4=\{1,3\}$ and $J_3=\{4\}$.}
    \label{fig:placeholder}
\end{figure}

\begin{rem}
    Contrary to Proposition \ref{prop-minkowski-decomp}, we do not have such a Minkowski decomposition for all possible weights of the form $(\lambda|\mu,0^{n-1})$ and $(\tilde{\lambda}|\tilde{\mu},0^{n-1})$.

    For example we have for $\gl(1|2)$
    \begin{equation*}
        \left(S(1|1,0)+S(1|1,0)\right)\cap\left\{\mathbf{m}\in\Z^{\hat{\Phi}^+}\,\big|\, m_{\e_1-\d_1},m_{\e_1-\d_2}\in\{0,1\}\right\}\subsetneq S(2|2,0).
    \end{equation*}
    The number of lattice points on the left-hand-side is $10$, whereas the number of lattice points of $S(2|2,0)$ on the right-hand side is $12$.
\end{rem}

\section{Geometric applications: projective embeddings of flag supervarieties}\label{sec-degen}

Fix a weight $\nu\in \mathfrak h^*$, and let $v_\nu\in V(\nu)$ be a highest weight vector. Let $P$ be the parabolic subgroup in $G$ stabilizing the line $\C v_\nu\subset V(\nu)$. Then the orbit $G\cdot [v_\nu]$ gives an embedding of the partial flag supervariety $G/P\subset \P V(\nu)$. The goal of this section is to study the geometric properties of this embedded flag supervariety.\\

For each $k\ge0$, define $I_k\subset \mathrm{Sym}^k\big(V(\nu)^*\big)\simeq \mathrm{Sym}^k\big(V(\nu)\big)^*$ to be the largest $\g$-stable subspace orthogonal to $v_\nu^{ k}\in \mathrm{Sym}^k\big(V(\nu)\big)$. We claim that $I_k\cdot \mathrm{Sym}^\ell\big(V(\nu)^*\big)\subset I_{k+\ell}$.

To see this, pick $w\in I_k$ and $f\in \mathrm{Sym}^\ell\big(V(\nu)^*\big)$. Clearly, $\langle wf, v_\nu^{k+\ell}\rangle = \langle w, v_\nu^k\rangle \langle f, v_\nu^\ell\rangle = 0$. It therefore suffices to show that $x\cdot (wf)\in (v_\nu^k)^\perp$ for each $x\in \g$. However,

\begin{align*}
    \langle x\cdot (wf), v_\nu^{k+\ell}\rangle &= \langle (x\cdot w)f, v_\nu^{k+\ell}\rangle +(-1)^{|x|\cdot|w|}\langle w(x\cdot f), v_\nu^{k+\ell}\rangle\\
    &= \langle x\cdot w, v_\nu^{k}\rangle\langle f, v_\nu^\ell\rangle +(-1)^{|x|\cdot|w|}\langle w, v_\nu^k\rangle\langle x\cdot f, v_\nu^{k+\ell}\rangle\\
    &=0,
\end{align*}
where both terms are $0$ because $x\cdot w$ and $w$ both belong to $I_k\subset (v_\nu^k)^\perp$.

As a result,

\[I:= \bigoplus_{k=0}^\infty I_k\subset \mathrm{Sym}\big(V(\nu)^*\big) = \C[\P V(\nu)]\]
is a homogeneous ideal inside the homogeneous coordinate ring of $\P V(\nu)$.

\begin{prop}
    $G\cdot [v_\nu]$ is a closed subvariety in $\P V(\nu)$ cut out by the homogeneous ideal $I$.
\end{prop}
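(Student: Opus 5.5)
We would prove this in the super setting by reducing to a statement about functor of points, or equivalently by comparing the closed subsuperscheme $Z:=\mathrm{Proj}(\mathrm{Sym}(V(\nu)^*)/I)$ with the orbit. The first step is to show that $G\cdot[v_\nu]\subseteq Z$. By construction each $I_k$ is $\g$-stable, and $G$ is connected with $\mathrm{Lie}(G)=\g$ in the Harish-Chandra pair sense: $G_{\overline 0}=GL(m)\times GL(n)$ is connected and the odd directions are governed by $\g_{\overline 1}$. Hence $I_k$ is $G$-stable. Since $I_k\subseteq (v_\nu^k)^\perp$, every element of $I$ vanishes at $[v_\nu]$, and by $G$-stability it vanishes on the whole orbit, including its $T$-points for any superalgebra $T$. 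So the orbit map $G\to\P V(\nu)$ factors through $Z$, and therefore through $G/P\to Z$.

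The second step is to show that the homogeneous coordinate ring of the orbit is exactly $\mathrm{Sym}(V(\nu)^*)/I$. A homogeneous $F\in\mathrm{Sym}^k(V(\nu)^*)$ vanishes on the (scheme-theoretic) orbit if and only if the function $g\mapsto\langle F,g\cdot v_\nu^k\rangle$ on $G$ is zero. Expanding around the identity via the distribution algebra $\mathcal U(\g)$, this holds if and only if $\langle F,u\cdot v_\nu^k\rangle=0$ for all $u\in\mathcal U(\g)$. Equivalently, $F$ is orthogonal to the cyclic module $\mathcal U(\g)v_\nu^k\subseteq\mathrm{Sym}^k V(\nu)$. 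The orthogonal complement of a $\g$-submodule is $\g$-stable, and conversely any $\g$-stable subspace orthogonal to $v_\nu^k$ is orthogonal to $\mathcal U(\g)v_\nu^k$. So the ideal of the orbit in degree $k$ is precisely $I_k$. For this step we use that the coordinate ring of $G$ embeds into $\mathcal U(\g)^*$, so a function on $G$ is determined by its Taylor coefficients. This holds for algebraic supergroups (Kostant, Koszul), since $G_{\overline 0}$ is connected and $\C[G]\cong\C[G_{\overline 0}]\otimes\bigwedge\g_{\overline 1}^*$.

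The third step, which we expect to be the main obstacle, is closedness: the orbit, a priori only locally closed, must coincide with $Z$. Because $P$ contains the Borel subgroup $B$, the supervariety $G/P$ is projective; its underlying even space $G_{\overline 0}/P_{\overline 0}$ is a classical partial flag variety. The orbit map $G/P\to\P V(\nu)$ is an immersion, since the stabiliser of the line is $P$ by definition. It is therefore a proper monomorphism, hence a closed immersion. Its scheme-theoretic image is then the orbit itself, and by the second step its homogeneous ideal is the saturation of $I$. It remains to observe that $I$ is already saturated, which we plan to check directly: if $F\cdot\mathrm{Sym}^\ell(V(\nu)^*)\subseteq I_{k+\ell}$, then $F$ vanishes on the orbit, so $F\in I_k$ by the second step. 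For properness of $G/P$ in the super setting, we would argue that a superscheme is proper if and only if its underlying even scheme is, and $(G/P)_{\mathrm{ev}}$ is the projective variety $G_{\overline 0}/P_{\overline 0}$.
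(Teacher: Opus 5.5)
Your proof is correct, but it takes a genuinely different route from the paper. The paper gives no geometric argument. It notes that $v_\nu$ is a lowest weight vector for the opposite Borel $\mathfrak{b}^{\mathrm{op}}$ and that $V(\nu)^*$ is a $\mathfrak{b}^{\mathrm{op}}$-highest weight module. It then quotes Gruson's theorem that lowest weight orbits are closed and cut out by the ideal of largest $\g$-stable subspaces orthogonal to powers of the lowest weight vector.

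You instead argue from first principles. The identity $I_k=(\mathcal{U}(\g)v_\nu^k)^\perp$, together with the embedding $\C[G]\hookrightarrow\mathcal{U}(\g)^*$ (valid because $G_{\overline 0}$ is connected), identifies $I$ with the full vanishing ideal of the orbit map. So saturation is automatic, and $\mathrm{Sym}^k(V(\nu)^*)/I_k\cong(\mathcal{U}(\g)v_\nu^k)^*$, which is exactly what feeds the next proposition and Corollary~\ref{cor-coordinate-ring}. Closedness then follows from properness of $G/P$, which you correctly reduce to the classical flag variety $G_{\overline 0}/P_{\overline 0}$. The citation buys brevity; your route buys transparency, at the cost of relying on supergroup foundations that the citation hides.

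The one step that deserves more than a phrase is that $G/P\to\P V(\nu)$ is an immersion. You need $P$ to be the full super stabilizer, so that $\mathrm{Lie}(P)=\mathrm{Stab}_{\g}(\C v_\nu)$ and the differential is injective at the base point, hence everywhere by equivariance. Combine this with the classical closed embedding $G_{\overline 0}/P_{\overline 0}\hookrightarrow\P(V(\nu)_{\overline 0})$, properness, and a Nakayama argument on completed local rings, and you get the closed immersion.

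Finally, say ``proper'' rather than ``projective'' at the start of your third step. Containing a Borel subgroup does not make a super homogeneous space projective: mixed super Grassmannians, going back to Manin, are proper but not projective. Projectivity of $G/P$ is an output of your argument rather than an input.
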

\begin{proof}
    We recall that in \cite[Theorem 1.3]{Gruson}, the analogous statement was proven for the lowest weight orbit $G\cdot[v_{-\nu}^*]\subset\P V(\nu)^*$ where the ideal $I\subset\mathrm{Sym}(V(\nu))$ is again graded and each graded component $I_k\subset\mathrm{Sym}^k(V(\nu))$ is the largest $\g$-stable subspace orthogonal to $(v_{-\nu}^*)^k\in\mathrm{Sym}^k(V(\nu)^*)$. We apply this result to get our claim:

    We consider the opposite Borel subalgebra $\mathfrak{b}^{\mathrm{op}}\subseteq\gl(m|n)$ of lower triangular matrices. Then $v_\nu\in V(\nu)$ is a $\mathfrak{b}^{\mathrm{op}}$-lowest weight vector.

    Now we know that $V(\nu)^*=V_{\mathfrak{b^{\mathrm{op}}}}(\eta)$, i.e. the dual space $V(\nu)^*$ is a highest weight space with respect to the opposite Borel $\mathfrak{b}^{\mathrm{op}}$ with highest weight $\eta\in\mathfrak{h}^*$.

    If we now apply \cite[Theorem 1.3]{Gruson} to $\eta$ and $\mathfrak{b}^{\mathrm{op}}$ , we get our claim.
\end{proof}

Now assume that $\nu$ is covariant. Then we have the following alternate description of $I_k$.

\begin{prop}
    If $\nu$ is covariant, then for each $k\ge 0$ we get a decomposition
    \[\mathrm{Sym}^k\big(V(\nu)^*\big) = I_k \oplus V(k\nu)^*\]
    as $\g$-representations.
\end{prop}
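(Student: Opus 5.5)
The plan is to exploit semisimplicity of covariant tensor categories, together with the Cartan embedding of Lemma \ref{lem-cartan-embedding}, and then dualize.

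\emph{Step 1: semisimplicity.} Since $V(\nu)$ is covariant, it embeds into $(\C^{m|n})^{\otimes d}$ for some $d$. Hence $V(\nu)^{\otimes k}$ embeds into $(\C^{m|n})^{\otimes dk}$. The super-symmetric power $\mathrm{Sym}^k(V(\nu))$ is a direct summand of $V(\nu)^{\otimes k}$, cut out by the symmetrizer, which makes sense since we are in characteristic $0$. So $\mathrm{Sym}^k(V(\nu))$ is a semisimple $\g$-module all of whose simple constituents are covariant. Dualizing, $\mathrm{Sym}^k(V(\nu)^*)\simeq \mathrm{Sym}^k(V(\nu))^*$ is also semisimple. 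Its constituents are the duals of covariant modules.

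\emph{Step 2: locating $V(k\nu)$.} Iterating Lemma \ref{lem-cartan-embedding}, the vector $v_\nu^{\otimes k}$ generates a copy of $V(k\nu)$ inside $V(\nu)^{\otimes k}$. Since $v_\nu$ is even (or we may take it so), $v_\nu^{\otimes k}$ is symmetric and equals the image of $v_\nu^k$. Therefore $W:=\mathcal{U}(\g)\cdot v_\nu^k\subseteq \mathrm{Sym}^k(V(\nu))$ is isomorphic to $V(k\nu)$. By Step 1 it has a $\g$-stable complement $W'$, so $\mathrm{Sym}^k(V(\nu))=W\oplus W'$.

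Moreover, the weight $k\nu$ has multiplicity one in $\mathrm{Sym}^k(V(\nu))$. Indeed, every weight of $V(\nu)^{\otimes k}$ is $k\nu$ minus a sum of positive roots, and equality forces every factor to be $v_\nu$. It follows that $W'$ contains no vector of weight $k\nu$.

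\emph{Step 3: identifying $I_k$.} Dualizing the decomposition gives $\mathrm{Sym}^k(V(\nu)^*)=W^\perp\oplus W'^\perp$, where $W'^\perp\simeq W^*\simeq V(k\nu)^*$ and $W^\perp\simeq W'^*$.

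I claim that $I_k=W^\perp$. For one inclusion, $W^\perp$ is $\g$-stable and orthogonal to $v_\nu^k\in W$, so $W^\perp\subseteq I_k$ by maximality. For the other, $I_k$ is $\g$-stable and orthogonal to $v_\nu^k$, hence orthogonal to $\mathcal{U}(\g)v_\nu^k=W$. Here one uses that for a $\g$-stable subspace $U$, the annihilator $\{x\in \mathrm{Sym}^k V(\nu): \langle U,x\rangle=0\}$ is $\g$-stable, where the sign conventions for the super pairing only introduce signs. Thus $I_k\subseteq W^\perp$. Combining the two inclusions yields the decomposition $\mathrm{Sym}^k(V(\nu)^*)=I_k\oplus V(k\nu)^*$.

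\emph{Main obstacle.} The delicate point is Step 1 in the super setting: one needs that $\mathrm{Sym}^k$ of a covariant module is again a summand of a tensor power of $\C^{m|n}$ and hence completely reducible, and that the duality pairing behaves correctly with super signs. Once semisimplicity is available, the remaining steps are formal.
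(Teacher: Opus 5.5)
Your proof is correct and is essentially the paper's argument in dual form. Both rest on semisimplicity coming from covariance and on the highest weight vector generating the $V(k\nu)$-component. The paper splits $\mathrm{Sym}^k(V(\nu)^*)=V(k\nu)^*\oplus M$ via $(v_\nu^*)^k$ and uses irreducibility of $V(k\nu)^*$ to force $M=I_k$. You instead split $\mathrm{Sym}^k(V(\nu))=W\oplus W'$ and identify $I_k=W^\perp$ directly by your two inclusions, so your multiplicity-one remark is not actually needed. Your observation that $v_\nu$ must be taken even, since otherwise $v_\nu^k=0$ for $k\ge 2$, makes explicit a point the paper leaves implicit.
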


\begin{proof}
    Begin by noting that if $\nu$ is covariant, then $V(\nu)^{\otimes k}$ is semisimple, and hence so is $\mathrm{Sym}^k\big(V(\nu)^*\big)\subset \left(V(\nu)^*\right)^{\otimes k}$. 
    Picking a weight basis for $V(\nu)$ and letting $v_\nu^*\in V(\nu)^*$ be the dual basis element to $v_\nu$, we see that $(v_\nu^*)^k\in \mathrm{Sym}\big(V(\nu)^*\big)$ generates a copy of $V(k\nu)^*$ by comparing their respective lowest weights, which in both cases is exactly $-k\nu$. 
    By semisimplicity, we get that
    \[\mathrm{Sym}^k\big(V(\nu)^*\big) = V(k\nu)^*\oplus M\]
    for some $\g$-submodule $M$. Moreover, $\langle (v_\nu^*)^k, v_\nu^k\rangle = 1\ne 0$, so that $M\subset (v_\nu^k)^\perp$. By the irreducibility of $V(k\nu)^*$, it follows that $M = I_k$ as required.
\end{proof}

\begin{cor}\label{cor-coordinate-ring}
    If $\nu$ is covariant, then the coordinate ring of the embedded partial flag supervariety $G/P\subset\P V(\nu)$ is isomorphic to
    \begin{equation*}
        \mathbb{C}[G/P]_\nu:=\bigoplus_{k\ge0}V(k\nu)^*,
    \end{equation*}
    where the multiplication is induced from the $\g$-equivariant map
    \begin{equation*}
        v^*_{m\nu}\otimes v^*_{n\nu}\mapsto v^*_{(n+m)\nu}
    \end{equation*}
\end{cor}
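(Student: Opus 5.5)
The plan is to combine the two preceding propositions. By the proposition identifying $G\cdot[v_\nu]=G/P$ with the closed subvariety of $\P V(\nu)$ cut out by the homogeneous ideal $I=\bigoplus_k I_k$, the homogeneous coordinate ring of the embedded flag supervariety is
\[
\C[G/P]=\mathrm{Sym}\big(V(\nu)^*\big)/I=\bigoplus_{k\ge0}\mathrm{Sym}^k\big(V(\nu)^*\big)/I_k .
\]
The previous proposition gives, for covariant $\nu$, a $\g$-stable splitting $\mathrm{Sym}^k(V(\nu)^*)=I_k\oplus V(k\nu)^*$. Here $V(k\nu)^*$ is the submodule generated by $(v_\nu^*)^k$. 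So in each degree the quotient map restricts to a $\g$-isomorphism $V(k\nu)^*\xrightarrow{\sim}\mathrm{Sym}^k(V(\nu)^*)/I_k$. This identifies $\C[G/P]$ with $\bigoplus_k V(k\nu)^*$ as a graded $\g$-module.

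Next I will identify the multiplication. Take $a\in V(k\nu)^*$ and $b\in V(\ell\nu)^*$, viewed inside $\mathrm{Sym}^k$ and $\mathrm{Sym}^\ell$. Their product in $\C[G/P]$ is the class of $ab\in\mathrm{Sym}^{k+\ell}(V(\nu)^*)$ modulo $I_{k+\ell}$, which amounts to projecting $ab$ onto the summand $V((k+\ell)\nu)^*$ along $I_{k+\ell}$. The map
\[
V(k\nu)^*\otimes V(\ell\nu)^*\to V((k+\ell)\nu)^*
\]
is therefore $\g$-equivariant, since multiplication in $\mathrm{Sym}$ and the projection along the $\g$-stable complement are both equivariant. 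On the generators it sends $(v_\nu^*)^k\otimes(v_\nu^*)^\ell\mapsto(v_\nu^*)^{k+\ell}$, and this image is nonzero because $\langle (v_\nu^*)^{k+\ell},v_\nu^{k+\ell}\rangle=1$ while $I_{k+\ell}\perp v_\nu^{k+\ell}$. Writing $v^*_{k\nu}:=(v_\nu^*)^k$, this is exactly the stated rule $v^*_{k\nu}\otimes v^*_{\ell\nu}\mapsto v^*_{(k+\ell)\nu}$.

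The one point needing care is that this rule determines the product uniquely. The source $V(k\nu)^*\otimes V(\ell\nu)^*$ is semisimple, because $\nu$ is covariant and everything embeds into tensor powers of the natural representation. It contains $V((k+\ell)\nu)^*$ exactly once, as the Cartan component generated by the lowest weight vector $v^*_{k\nu}\otimes v^*_{\ell\nu}$ (this is the dual of Lemma \ref{lem-cartan-embedding}). Weight considerations show this component occurs with multiplicity one: the weight $-(k+\ell)\nu$ has a one-dimensional weight space in the tensor product. Hence, by Schur's lemma, any $\g$-map to $V((k+\ell)\nu)^*$ is a scalar multiple of the projection onto the Cartan component. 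The scalar is fixed by the image of $v^*_{k\nu}\otimes v^*_{\ell\nu}$, so the multiplication is the map induced by the stated assignment, which proves the corollary.
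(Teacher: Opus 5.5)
Your proposal is correct and follows the paper's argument. The paper identifies the ring as $\mathrm{Sym}(V(\nu)^*)/I$ using the splitting $\mathrm{Sym}^k(V(\nu)^*)=I_k\oplus V(k\nu)^*$, notes that the inherited multiplication is $\mathfrak{gl}(m|n)$-equivariant, and computes $(v_\nu^*)^k(v_\nu^*)^\ell=(v_\nu^*)^{k+\ell}$. Your extra multiplicity-one and Schur argument spells out why the product is determined by that assignment, a point the paper's one-line proof leaves implicit.
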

\begin{proof}
    For the last part, we see that the multipication is inherited from $\mathrm{Sym}(V(\nu)^*)$ and thus $\g$-equivariant. Hence, by definition, if we multiply $(v^*_{\nu})^m$ with $(v^*_{\nu})^n$, we get $(v^*_{\nu})^{n+m}$.
\end{proof}

We now recall the notion of essential monomials and favourableness from \cite{Flag-Degen} applied to the modules $V(k\nu)$ for $k\geq 0$.

\begin{defn}
    Fix a basis of $\mathfrak{n}^-$ given by $(f_1,\dots,f_n)$ and a monomial order $<$ on $\Sym(\mathfrak{n}^-)$. If $\nu$ is covariant, then we say a monomial $\mathbf{m}\in\Z^n$ is \textit{essential} for $V(\nu)$ if 
    \begin{equation*}
        f^\mathbf{m}\cdot v_{\nu}\not\in\mathrm{span}\langle f^\mathbf{n}\cdot v_\nu\mid\mathbf{n<\mathbf{m}}\rangle_\C
    \end{equation*}
\end{defn}
\begin{defn}\label{def-essential-mon-covariant}
    If $\nu$ is covariant, we say $V(\nu)$ is \textit{favourable} if every essential monomial of $V(k\nu)$ can be written as a sum of $k$ essential monomials of $V(\nu)$. 
\end{defn}

\begin{prop}\label{prop-strip-favourable}
    Fix a covariant weight of the form $\nu=(\nu^1|0^n)$. Then the module $V(\nu)$ is favourable with respect to the monomial order from Definition \ref{def-monomial-order}.
\end{prop}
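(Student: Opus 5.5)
The plan is to identify the essential monomials of $V(k\nu)$ with the lattice points $S(k\nu^1|0^n)$, and then invoke the Minkowski decomposition of Proposition \ref{prop-minkowski-decomp}. Since $\nu=(\nu^1|0^n)$ is covariant, so is $k\nu=(k\nu^1|0^n)$, and Theorem \ref{thm-fflv-strip} applies to every $k\geq 1$.

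\textbf{Step 1: essential monomials of $V(k\nu)$ are exactly $S(k\nu^1|0^n)$.} Because the order of Definition \ref{def-monomial-order} is homogeneous, it refines the PBW filtration. The straightening law (Proposition \ref{prop-straightening}, together with \cite[Proposition 1]{FFLV_Type_A} for purely even Dyck paths) shows that every $f^{\mathbf{t}}$ with $\mathbf{t}\notin S(k\nu^1|0^n)$ is, modulo $I(k\nu^1|0^n)$, a combination of strictly smaller monomials. Hence non-lattice points are not essential, and every essential monomial lies in $S(k\nu^1|0^n)$. The essential monomials always index a basis of $V(k\nu)$, and $|S(k\nu^1|0^n)|=\dim V(k\nu)$ by Theorem \ref{thm-fflv-strip}. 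Comparing cardinalities, the essential set equals $S(k\nu^1|0^n)$. One subtle point is that exponents of odd root vectors are at most one; $f_\alpha^2=0$ for odd $\alpha$ in $\Sym(\mathfrak n^-)$, so only $\{0,1\}$-exponents on odd coordinates ever occur.

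\textbf{Step 2: decompose.} Iterating Proposition \ref{prop-minkowski-decomp}, or more directly using the formula $\hat S(k\nu^1)=k\hat S(\nu^1)$ from the proof of Proposition \ref{prop-minkowski-decomp-chain} intersected with $M$, gives
\begin{equation*}
S(k\nu^1|0^n)=\bigl(S(\nu^1|0^n)+\cdots+S(\nu^1|0^n)\bigr)\cap M .
\end{equation*}
Thus every essential monomial of $V(k\nu)$ is a sum of $k$ essential monomials of $V(\nu)$.

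\textbf{Step 3: the intersection with $M$.} The intersection with $M$ causes no trouble, since favourability only asks that each element of the left side be such a sum. To justify the iteration, I would induct on $k$ using $S(k\nu^1|0^n)=(S((k-1)\nu^1|0^n)+S(\nu^1|0^n))\cap M$. A summand from $S((k-1)\nu^1|0^n)$ itself lies in $M$, so it decomposes further by the induction hypothesis.

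\textbf{Main obstacle.} The delicate point is Step 1: ensuring that essentiality is computed in $V(k\nu)$ (equivalently $\gr V(k\nu)$) with respect to the same fixed order. The straightening relations give leading terms in the correct direction precisely because the order is homogeneous lexicographic. If any gap arises, I would fall back on \cite[Proposition 3.12]{Flag-Degen} as used in Section \ref{sec-indep-strip}.
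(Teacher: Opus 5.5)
Your proposal is correct and follows the paper's argument. Like the paper, you identify the essential monomials of $V(k\nu)$ with the lattice points $S(k\nu^1|0^n)$ via Theorem~\ref{thm-fflv-strip}, then conclude with the Minkowski decomposition of Proposition~\ref{prop-minkowski-decomp}; the only differences are that you derive that identification from the straightening law plus a dimension count rather than quoting essentiality ``by construction,'' and that you spell out the induction through the intersection with $M$, which the paper leaves implicit.
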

\begin{proof}
    By construction of the monomials from Theorem \ref{thm-fflv-strip}, we know that they are essential. Then, the Minkowski sum property from Proposition \ref{prop-minkowski-decomp} provides us with the claim.
\end{proof}
\begin{rem}
    If our highest weight is of the form $(\lambda|\mu,0^{n-1})$, where $\mu\neq0$, then we cannot use the previous argument to get that the module $V(\lambda|\mu,0^{n-1})$ is favourable.  We suspect that the semigroup of essential monomials $\Gamma((\lambda|\mu,0^{n-1}),<)$ \cite[Definition 3.13]{Flag-Degen} is not even finitely generated. 
\end{rem}
Next, we want to see how we can relate the results from \cite{Flag-Degen} to the coordinate ring $\C[G/P]_\nu$. We first recall the ring considered there:

We consider for $k\ge 0$ the \textit{Kac modules} \cite{Kac_Rep} $K_\mathfrak{b}(k\nu)$, which are the maximal finite-dimensional highest weight modules of highest weight $k\nu$. Then we can put a superalgebra structure on 
\begin{equation*}
    \bigoplus_{k\ge 0}K_{\mathfrak{b}}(k\nu)^*.
\end{equation*}
The multiplication is given by dualizing the Cartan map, see \cite[Proposition 3.2]{Flag-Degen}
\begin{equation}\label{eq-Cartan-map-Kac}
    \begin{aligned}
    C_{m,n}:K_{\mathfrak{b}}((m+n)\lambda)&\rightarrow K_{\mathfrak{b}}(m\lambda)\otimes_{\C} K_{\mathfrak{b}}(n\lambda)\\
    v^{\mathrm{Kac}}_{(m+n)\lambda}&\rightarrow v^{\mathrm{Kac}}_{m\lambda}\otimes v^{\mathrm{Kac}}_{n\lambda},
\end{aligned}
\end{equation}
where $v^{\mathrm{Kac}}_{k\nu}$ refers to the highest weight vector of the Kac module $K_{\mathfrak{b}}(k\nu)$ and we use the following isomorphism of $\g$-modules \cite[Appendix A.2.3.]{Musson}
\begin{equation}\label{eq-swap-dual-tensor}
\begin{aligned}
    \gamma_{V,W}:W^*\otimes V^*&\rightarrow(V\otimes W)^*\\
    \varphi\otimes\psi&\mapsto\left(r\otimes s\mapsto\psi(r)\varphi(s)\right).
\end{aligned}
\end{equation}
\begin{lem}\label{lem-embedding-into-kacs}
    We have the embedding of superalgebras
    \begin{equation*}
        \C[G/P]_\nu:=\bigoplus_{k\ge 0}V(k\nu)^*\hookrightarrow\bigoplus_{k\ge 0}K_{\mathfrak{b}}(k\nu)^*
    \end{equation*}
\end{lem}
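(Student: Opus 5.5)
The plan is to realize each graded piece $V(k\nu)^*$ as a quotient-dual of the Kac module and then check that these maps are compatible with the two multiplications. Since $K_{\mathfrak b}(k\nu)$ is the maximal finite-dimensional highest weight module of highest weight $k\nu$, its unique simple quotient is $V(k\nu)$. So we have a surjection $\pi_k:K_{\mathfrak b}(k\nu)\twoheadrightarrow V(k\nu)$ sending $v^{\mathrm{Kac}}_{k\nu}\mapsto v_{k\nu}$. Dualizing gives an injective $\g$-equivariant map $\pi_k^*:V(k\nu)^*\hookrightarrow K_{\mathfrak b}(k\nu)^*$, and taking the direct sum over $k\ge0$ gives an injective, degree-preserving linear map. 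It remains to check that this map is multiplicative and unital; the unit case $k=0$ is trivial, since both sides are $\C$.

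For multiplicativity, I will compare the two Cartan maps. The multiplication on $\C[G/P]_\nu$ described in Corollary \ref{cor-coordinate-ring} is dual, via $\gamma$ from \eqref{eq-swap-dual-tensor}, to the Cartan embedding $c_{a,b}:V((a+b)\nu)\to V(a\nu)\otimes V(b\nu)$ of Lemma \ref{lem-cartan-embedding}. To see this, note that the product of $(v_\nu^*)^a$ and $(v_\nu^*)^b$ is $(v_\nu^*)^{a+b}$, and the map is $\g$-equivariant. A $\g$-map out of the cyclic module $V(a\nu)^*\otimes V(b\nu)^*$ onto the simple module $V((a+b)\nu)^*$ is therefore determined up to scalar. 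I would normalize it to be exactly the dual of $c_{a,b}$ composed with $\gamma$. The key identity to prove is then
\begin{equation*}
(\pi_a\otimes\pi_b)\circ C_{a,b}=c_{a,b}\circ\pi_{a+b}
\end{equation*}
as maps $K_{\mathfrak b}((a+b)\nu)\to V(a\nu)\otimes V(b\nu)$. Both sides are $\g$-homomorphisms from the cyclic module $K_{\mathfrak b}((a+b)\nu)$, which is generated by $v^{\mathrm{Kac}}_{(a+b)\nu}$. Both sides send this generator to $v_{a\nu}\otimes v_{b\nu}$, so the two maps agree. Dualizing this identity and using naturality of $\gamma_{V,W}$ in both arguments shows that $\pi_{a+b}^*$ applied to a product of $\varphi\in V(a\nu)^*$ and $\psi\in V(b\nu)^*$ equals the Kac product of $\pi_a^*\varphi$ and $\pi_b^*\psi$.

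The main obstacle is bookkeeping rather than a new idea. I need to make sure that the multiplication on $\C[G/P]_\nu$ inherited from $\mathrm{Sym}(V(\nu)^*)$ agrees, and not merely up to a scalar depending on $(a,b)$, with the dual Cartan multiplication. I also need to track the order swap and the signs in $\gamma$. For the scalar, I would evaluate both products on $v_{(a+b)\nu}$ against $v^*_{a\nu}\otimes v^*_{b\nu}$: both give $1$ under the normalization $(v_\nu^*)^k\leftrightarrow v^*_{k\nu}$, so the two multiplications coincide. For the signs, the highest weight vectors are even, so on the generators no sign appears. Equivariance together with the super-naturality of $\gamma$ then propagates the identity to all elements, and the swap $W^*\otimes V^*$ is applied identically on both sides.
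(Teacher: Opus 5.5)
Your proposal is correct and follows the paper's proof. You dualize the surjections $\pi_k\colon K_{\mathfrak b}(k\nu)\twoheadrightarrow V(k\nu)$, and you get multiplicativity from $(\pi_a\otimes\pi_b)\circ C_{a,b}=c_{a,b}\circ\pi_{a+b}$: both sides are $\g$-maps out of $K_{\mathfrak b}((a+b)\nu)$ that agree on $v^{\mathrm{Kac}}_{(a+b)\nu}$, and the dual of the multiplication on $\C[G/P]_\nu$ is identified with the Cartan embedding of Lemma \ref{lem-cartan-embedding}.

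One small slip: $V(a\nu)^*\otimes V(b\nu)^*$ is not generated by $v^*_{a\nu}\otimes v^*_{b\nu}$ (that vector only generates the Cartan component). So "determined up to scalar" should come from $\dim\mathrm{Hom}_{\g}\bigl(V((a+b)\nu),V(a\nu)\otimes V(b\nu)\bigr)=1$, which holds because the $(a+b)\nu$-weight space of $V(a\nu)\otimes V(b\nu)$ is one-dimensional. Alternatively, you can take it directly from Corollary \ref{cor-coordinate-ring}, which already fixes the multiplication by $v^*_{a\nu}\otimes v^*_{b\nu}\mapsto v^*_{(a+b)\nu}$.
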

\begin{proof}
    We know that the Kac module $K_{b}(k\nu)$ surjects onto $V(k\nu)$ by mapping the corresponding highest 
    weight vectors $v^{\mathrm{Kac}}_{k\nu}\mapsto v_{k\nu}$, call this map $\varphi_k:K_{b}(k\nu)\rightarrow V(k\nu)$. Hence we have an injective map on the duals $\varphi_k^*:V(k\nu)^*\rightarrow K_{b}(k\nu)^*$.

    We consider the map
    \begin{equation*}
        \varphi:=\bigoplus_{k\ge 0}\varphi_k^*
    \end{equation*}

    We see that this map is $\g$-equivariant. 
    We only need to verify that this map is also compatible with the multiplication.

    For this, consider the following diagram
    \begin{equation*}
        \begin{tikzcd}[ampersand replacement=\&]
        	{V((k+l)\nu)^*} \&\& {K_{\mathfrak{b}}((k+l)\nu)^*} \\
        	\\
        	{V(k\lambda)^*\otimes V(l\lambda)^*} \&\& {K_{\mathfrak{b}}(k\lambda)^*\otimes K_{\mathfrak{b}}(l\lambda)^*}
        	\arrow["{\varphi^*_{k+l}}", from=1-1, to=1-3]
        	\arrow["{\mathrm{mult}_{\mathbb{C}[G/P]_\nu}}", from=3-1, to=1-1]
        	\arrow["{\varphi^*_{k}}\otimes\varphi^*_{l}",from=3-1, to=3-3]
        	\arrow["{C_{k,l}^*}", from=3-3, to=1-3]
        \end{tikzcd}
    \end{equation*}
    We want this diagram to commute, which we can check by dualizing the maps and considering
    \begin{equation*}
        \begin{tikzcd}[ampersand replacement=\&]
        	{V((k+l)\nu)} \&\& {K_{\mathfrak{b}}((k+l)\nu)} \\
        	\\
        	{V(l\lambda)\otimes V(k\lambda)} \&\& {K_{\mathfrak{b}}(l\lambda)\otimes K_{\mathfrak{b}}(k\lambda)}
        	\arrow["{\mathrm{mult}_{\mathbb{C}[G/P]_\nu}}^*"', from=1-1, to=3-1]
        	\arrow["{\varphi_{k+l}}",from=1-3, to=1-1]
        	\arrow["{C_{k,l}}", from=1-3, to=3-3]
        	\arrow["{\varphi_{l}}\otimes\varphi_{k}",from=3-3, to=3-1]
        \end{tikzcd},
    \end{equation*}
    where we utilized \eqref{eq-swap-dual-tensor}.
    The dualization of the multiplication in $\C[G/P]_\nu$ is the Cartan embedding from Lemma \ref{lem-cartan-embedding}.

    Commutativity of this diagram follows from checking that it commutes for the highest weight vector of the Kac module $v^{\mathrm{Kac}}_{(k+l)\nu}$ and that the maps are $\g$-equivariant.
\end{proof}
As an application of this result, we get a degeneration of the embedded partial flag supervariety
\begin{thm}\label{thm-degeneration}
    There exists a flat proper morphism $\kappa:\mathrm{Proj}\mathcal{R}\rightarrow\mathbb{A}^{1|0}$ such that the fibers are
    \begin{equation*}
        \kappa^{-1}(a)\cong\begin{cases}
            \mathrm{Proj}(\C[G/P]_\nu) &a\neq0\\
            \mathrm{Proj}(\C[x^\mathbf{s}u\,|\,\mathbf{s}\in S(\nu^1|0^n)]) &a=0.
        \end{cases}
    \end{equation*}
    The affine spectrum of the algebra of the special fibre $\mathrm{Spec}\,\C[x^\mathbf{s}u\,|\,\mathbf{s}\in S(\nu^1|0^n)]$ is in addition a toric supervariety as defined by Jankowski \cite{Toric-Super-New,Toric-Super-One-Odd}.
\end{thm}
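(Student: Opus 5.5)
The plan is to follow the super-analogue of the favourable-module machinery of \cite{Flag-Degen}, combined with Corollary \ref{cor-coordinate-ring} and Lemma \ref{lem-embedding-into-kacs}.

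\textbf{Step 1: the Rees algebra.} By Lemma \ref{lem-embedding-into-kacs}, $\C[G/P]_\nu$ is a graded subsuperalgebra of $\bigoplus_k K_{\mathfrak b}(k\nu)^*$, and \cite{Flag-Degen} works inside the latter. The monomial order of Definition \ref{def-monomial-order} gives, on each $V(k\nu)$, a filtration indexed by monomials: $F_{\le \mathbf m}V(k\nu)=\mathrm{span}\{f^{\mathbf n}v_{k\nu}\mid \mathbf n\le\mathbf m\}$. Dualizing yields a filtration of each $V(k\nu)^*$. I will check that this filtration is multiplicative with respect to the product of Corollary \ref{cor-coordinate-ring}. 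The dual of the product is the Cartan embedding $v_{(k+l)\nu}\mapsto v_{k\nu}\otimes v_{l\nu}$ of Lemma \ref{lem-cartan-embedding}. Since $f^{\mathbf m}$ acts on $v_{k\nu}\otimes v_{l\nu}$ by a signed Leibniz rule, this is a sum of terms $f^{\mathbf m_1}v_{k\nu}\otimes f^{\mathbf m_2}v_{l\nu}$ with $\mathbf m_1+\mathbf m_2=\mathbf m$. Because the order is a monomial order, the filtrations are therefore compatible, exactly as in \cite[Proposition 3.2ff]{Flag-Degen}; the embedding into Kac module duals lets me cite those arguments directly.

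Choose a linear form (a weight vector $e$) refining the order on the finitely many relevant monomials. Define $\mathcal R$ as the Rees superalgebra $\bigoplus_{k,d}F_{d}\C[G/P]_{k\nu}\,a^{d}\subseteq \C[G/P]_\nu[a]$, with $a$ even. This makes $\mathcal R$ a $\C[a]$-module, and $\kappa$ is $\mathrm{Proj}\mathcal R\to\mathrm{Spec}\,\C[a]=\mathbb A^{1|0}$. Flatness follows because $\mathcal R$ is $\C[a]$-torsion free, being a subring of $\C[G/P]_\nu[a]$. Properness follows because $\mathcal R$ is finitely generated in degree $k=1$ over $\C[a]$. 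The generic fibre, obtained by inverting $a$, is $\C[G/P]_\nu$, which gives the case $a\neq0$.

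\textbf{Step 2: the special fibre.} Here $\mathcal R/a\mathcal R\cong\gr\C[G/P]_\nu$, and each graded piece $\gr V(k\nu)^*$ has a basis dual to the essential monomials of $V(k\nu)$. By Theorem \ref{thm-fflv-strip} these essential monomials are $S(k\nu^1|0^n)$. The product of two dual basis elements is the dual element of the sum of exponents: it is the leading term of the Cartan embedding, up to a nonzero sign, and the sum is again essential. So the associated graded algebra is the semigroup algebra with basis $x^{\mathbf s}u^k$, $\mathbf s\in S(k\nu^1|0^n)$. By Proposition \ref{prop-strip-favourable} (via Proposition \ref{prop-minkowski-decomp}), $S(k\nu^1|0^n)=kS(\nu^1|0^n)\cap\{0,1\}^{d_2}$. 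Hence this algebra is generated by $x^{\mathbf s}u$ with $\mathbf s\in S(\nu^1|0^n)$, and the odd variables square to zero. This identifies the special fibre with $\mathrm{Proj}\,\C[x^{\mathbf s}u\mid\mathbf s\in S(\nu^1|0^n)]$.

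\textbf{Step 3: toricity.} I will verify Jankowski's definition for the affine cone. It requires a supercommutative algebra generated by monomials in even variables $x_\alpha$ (for $\alpha$ even) and odd variables $\xi_\alpha$ (for $\alpha$ odd), carrying an action of the even torus with an open dense orbit on the underlying even variety. The reduced algebra is the affine semigroup ring of the cone over the even projection of $P(\nu^1|0^n)$, which is normal by Proposition \ref{prop-strip-fflv-is-marked-chain}.

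The main obstacle is Step 2: controlling signs and ensuring that the leading term of a product of odd-containing dual vectors is nonzero, that is, that the super Leibniz expansion produces no cancellations of the leading monomial. I would first try to reduce this to \cite[Proposition 3.12]{Flag-Degen} through the Kac module embedding, and check nonvanishing of the leading coefficient directly on the exterior-power pieces $S(1^i|0^n)$ given by Lemma \ref{lem-bijection-small-fundamentals}.
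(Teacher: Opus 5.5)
Your architecture matches the paper's. Lemma \ref{lem-embedding-into-kacs} puts $\C[G/P]_\nu$ inside the Kac-module algebra so that \cite{Flag-Degen} applies, favourability (Proposition \ref{prop-strip-favourable}) identifies the special fibre, and toricity comes at the end. The paper simply invokes \cite[Theorem 1]{Flag-Degen} for the family. Where you replace that citation with your own Rees-algebra construction, there is a genuine gap.

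\textbf{The gap: passing from the monomial order to one linear form.} The filtration given by the monomial order $<$ is $\Z^N$-valued. To get a one-parameter family you switch to a single linear form $e$ ``refining the order on the finitely many relevant monomials''. But $\bigoplus_{k,d}F_d\C[G/P]_{k\nu}a^d$ involves the filtrations of all $V(k\nu)$, $k\ge 0$. That means infinitely many exponents: all of $\bigcup_k S(k\nu^1|0^n)$, plus the non-essential monomials you must straighten in every $V(k\nu)$. A linear form can only be made to agree with $<$ on a finite set. So $\mathcal R/a\mathcal R$ is $\gr_e\C[G/P]_\nu$, and nothing in your argument shows it coincides with $\gr_<\C[G/P]_\nu$. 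Yet Step 2 (dual basis to the essential monomials, products given by adding exponents) computes the latter.

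\textbf{The missing idea.} You need the standard finite-presentation/Gr\"obner argument:
\begin{enumerate}
\item Use favourability first: $\gr_<\C[G/P]_\nu$ is generated by the finitely many classes indexed by $S(\nu^1|0^n)$.
\item Lift these to a presentation of $\C[G/P]_\nu$ as a quotient of a free supercommutative algebra.
\item Choose $w$ so that $\mathrm{in}_w$ of a finite Gr\"obner basis equals its $<$-initial terms.
\end{enumerate}
The family is then the Gr\"obner degeneration along $w$. This is exactly what the paper imports from \cite{Flag-Degen} (the generators $g_k$ and the weight vector $w$). Its properness argument, shifting $w$ by multiples of $\mathbf 1$ (allowed because the $g_k$ are homogeneous), lives in that setting. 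Your degree-one-generation argument for properness is fine once this is repaired.

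\textbf{Step 2: overlapping odd supports.} The claim ``the sum is again essential'' is false when $\mathbf m_1,\mathbf m_2$ share an odd coordinate. Then $f^{\mathbf m_1+\mathbf m_2}=0$ and the product vanishes in the associated graded; this is precisely where the intersection with $\{0,1\}^{d_2}$ comes from.

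\textbf{Step 2: the nonvanishing you flag is easy.} When the odd supports are disjoint, choose $\xi_{\mathbf m}$ annihilating $F_{<\mathbf m}$. Pairing $\xi_{\mathbf m_1}\otimes\xi_{\mathbf m_2}$ with $f^{\mathbf n_1}v_{k\nu}\otimes f^{\mathbf n_2}v_{l\nu}$ is nonzero only if $\mathbf n_i\ge\mathbf m_i$. So in $f^{\mathbf m_1+\mathbf m_2}(v_{k\nu}\otimes v_{l\nu})$ only the term $(\mathbf m_1,\mathbf m_2)$ survives, with coefficient $\pm$ a product of binomial coefficients. The genuinely delicate point is instead normalising the nonzero structure constants to the signs of $\C[x^{\mathbf s}u]$.

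\textbf{Step 3.} You only examine the reduced ring, which is the semigroup ring of the even face $\{s_\alpha=0,\ \alpha\in\Phi^+_{\overline 1}\}$ (the classical FFLV polytope of $\nu^1$). This does not address the odd part of Jankowski's definition. The paper handles that by citing \cite[Lemma 7.11]{Flag-Degen} rather than verifying the definition by hand.
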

\begin{proof}
    We can use \cite[Theorem 1, or Corollary 7.2]{Flag-Degen} applied to $V(k\nu)^*$, to get a flat morphism of superalgebras $\C[t]\rightarrow\mathcal{R}$ such that the corresponding closed fibres $\mathcal{R}/\langle t-a\rangle$ are $\C[G/P]_\nu$ for $a\neq 0$ and $\C[x^\mathbf{s}u|\mathbf{s}\in S(\nu^1|0^n)]$ for $a=0$, respectively.

    The reason why we can do this is that by Lemma \ref{lem-embedding-into-kacs}  the multiplication in $\C[G/P]_\nu$ matches that of the ring of Kac modules from \eqref{eq-Cartan-map-Kac} and \eqref{eq-swap-dual-tensor}.

    In order to take projective spectra, we note the that generators $g_k$ considered in \cite[Equation (6)]{Flag-Degen} are homogeneous. 
    
    To those, we can find by the proof of \cite[Theorem 1]{Flag-Degen} a weight vector $w\in\Z^\Phi$ singling out binomials and monomials defining the monomial superalgebra considered in the special fibre.
    Since the generators are homogeneous, we can add a multiple of the vector $\mathbf{1}=(1,\dots,1)\in\Z^\Phi$ without changing any of the initial forms of these generators, i.e. $\mathrm{in}_{w+r\mathbf{1}}(g_k)=\mathrm{in}_{w}(g_k)$ for all $r\in\Z$.
    Hence, the graded algebra $\mathcal{R}$ \cite[16]{Flag-Degen} would have in its $0$-graded piece only $\C[t]$ and thus, we have a proper map to $\A^1$.

    Lastly, we can use \cite[Lemma 7.11]{Flag-Degen} to give us that the affine superscheme $\mathrm{Spec}\,\C[x^\mathbf{s}u\,|\,\mathbf{s}\in S(\nu^1|0^n)]$ is an affine toric supervariety.
\end{proof}
\printbibliography
\end{document}